\documentclass[11pt]{amsart}
\usepackage[margin=1in]{geometry}

\usepackage{algorithm}
\usepackage[noend]{algorithmic}

\usepackage{graphicx}
\usepackage{amsfonts,amsmath,amssymb}
\usepackage{bm}
\newtheorem{theorem}{Theorem}[section]
\newtheorem{proposition}[theorem]{Proposition}
\newtheorem{lemma}[theorem]{Lemma}
\newtheorem{corollary}[theorem]{Corollary}
\newtheorem{definition}[theorem]{Definition}

\newcommand{\pmt}{\mathrm{Pr}_{\text{MT}}}
\newcommand{\mybinom}[3][0.8]{\scalebox{#1}{$\dbinom{#2}{#3}$}}

\newcommand{\ns}{\text{NS}}
\newcommand{\gns}{\text{GNS}}
\newcommand{\sns}{\text{SNS}}

\newcommand{\bE}{\ensuremath{\mathbf{E}}}

\DeclareMathOperator{\poly}{poly}

\begin{document}

\title{The Moser-Tardos Framework with Partial Resampling}

\author{
{\sc David G.~Harris}$^{1}$
\and
{\sc Aravind Srinivasan}$^{2}$
}

\setcounter{footnote}{0}
\addtocounter{footnote}{1}
\footnotetext{Department of Computer Science, University of Maryland, 
College Park, MD 20742. 
Research supported in part by NSF Awards CNS-1010789 and CCF-1422569.
Email: \texttt{davidgharris29@gmail.com}.}
\addtocounter{footnote}{1}
\footnotetext{Department of Computer Science and
Institute for Advanced Computer Studies, University of Maryland, 
College Park, MD 20742. 
Research supported in part by NSF Awards CNS-1010789, CCF-1422569, and CCF-1749864, a gift from Google, Inc., and research awards from Adobe, Inc. and Amazon, Inc.
Email: \texttt{srin@cs.umd.edu}.}

\date{}
\maketitle

\sloppy
\pagestyle{plain}

\setcounter{page}{1}
\begin{abstract}The resampling algorithm of Moser \& Tardos is a powerful approach to develop constructive versions of the Lov\'{a}sz Local Lemma (LLL). We generalize this to \emph{partial} resampling: when a bad event holds, we resample an appropriately-random \emph{subset} of the variables that define this event, rather than the entire set as in Moser \& Tardos.  This is particularly useful when the bad events are determined by sums of random variables. This leads to several improved algorithmic applications in scheduling, graph transversals, packet routing etc. For instance, we settle a conjecture of Szab\'{o} \& Tardos (2006) on graph transversals asymptotically, and obtain improved approximation ratios for a packet routing problem of Leighton, Maggs, \& Rao (1994). 
\end{abstract}

\smallskip \noindent
\textbf{Conference versions of this work.} Preliminary versions of parts of this paper appeared in two papers by the
authors: \cite{harris-srin-assign-lll,harris-srin:focs13}. 

\section{Introduction}
\label{sec:intro}
The Lov\'{a}sz Local Lemma (LLL) \cite{erdos-lovasz:lll} is a fundamental probabilistic tool. The breakthrough of Moser \& Tardos \cite{moser-tardos:lll} gives a constructive approach to the LLL through a very natural resampling procedure, which we summarize as follows. Suppose we have a collection $\mathcal B$ of ``bad" events, each $B \in \mathcal B$ being a Boolean function of  a subset  of \emph{independent} random variables $X_1, X_2, \ldots, X_n$. Then, assuming that the standard sufficient conditions of the LLL hold, the following resampling algorithm (which we refer to as the \emph{MT algorithm}) quickly converges to a setting of the $X_j$'s that simultaneously avoids all the bad-events in $\mathcal B$:
\begin{itemize}
\item Sample $X_1, \dots, X_n$ independently from their respective distributions;
\item \textbf{while} some bad event is true, pick one of these, say $B$, arbitrarily, and resample (independently) all the variables determining it.
\end{itemize}
We generalize this to an algorithm that we call the Partial Resampling Algorithm (PRA); the idea is that when a bad-event $B$ is true, we randomly select 
a \emph{subset} $D$ of its variables, according to some carefully-designed probability distribution, and then only resample the variables $X_j$ contained in $D$. This partial-resampling approach leads to algorithmic results for many applications that are not captured by the LLL. 

As a starting point, suppose the bad-events are defined by non-negative linear threshold functions. In such cases, the constraints (i.e., the complements of the $B \in \mathcal B$) have the form
\begin{equation}
\label{eqn:packing-constraint}
 ~\sum_{i,j} a_{k,i,j} [X_i = j] \leq b_k.
\end{equation}

Here we use the Iverson notation: given an event $\mathcal{E}$, the notation $[\mathcal{E}]$ will stand for the indicator variable for $\mathcal{E}$. Thus $[X_i = j]$ is the indicator variable that variable $X_i$ takes on value $j$.  The matrix $A$ of coefficients $a_{k,i,j}$ has $m$ rows indexed by $k$, and $N$ columns that are indexed by pairs $(i,j)$;  by scaling, we will assume throughout that $a_{k,i,j} \in [0,1]$.

Low-congestion routing is a motivating example of this type of problem: we are given a collection of (source, destination) pairs $(s_1,t_1), \dots, (s_n, t_n)$ in a directed or undirected graph $G$ with edge-set $E$; each edge $f \in E$ has a capacity $b_f$, and we are also given a collection $\mathcal{P}_i = \{P_{i,j}\}$ of possible routing paths for each $(s_i,t_i)$-pair. We aim to choose one path from each collection $\mathcal{P}_i$, in order to minimize the \emph{relative congestion}: the minimal $T$ such that the maximum load on any edge $f$ is at most $T \cdot c_f$.  This problem leads to a simple IP formulation:
$$
\text{minimize } T ~\mbox{subject to}~ \left[\forall i, ~\sum_j z_{i,j} = 1; ~~
\forall f \in E, ~\sum_{(i,j):~f \in P_{i,j}} z_{i,j} \leq T \cdot c_f; ~~z_{i,j} \in \{0,1\}. \right]
$$
where $z_{i,j}$ is an indicator variable for selecting path $P_{i,j}$.

There are two broad approaches to such problems, both starting with the natural LP relaxation where the variable $z_{i,j} \in [0,1]$ represents the fractional assignment of variable $X_i$ to value $j$.  Suppose that $z$ satisfies the constraints $\sum_{i,j} a_{k,i,j} z_{i,j} \leq c_k$. The natural question is: 
\begin{quote}
``What choice of vector $b$, and what conditions on the matrix $A$ and vector $c$,  ensure that (\ref{eqn:packing-constraint}) has an integer solution, which, furthermore, can be found efficiently?" 
\end{quote}

The first major approach to this is polyhedral. Letting $D$ denote the maximum column sum of $A$, i.e., $D = \max_{i,j} \sum_k a_{k,i,j}$, the rounding theorem of \cite{klrtvv} shows constructively that for all $k$, 
\begin{equation}
\label{eqn:klrtvv}
b_k = c_k + D
\end{equation}
suffices; for the low-congestion routing problem, for instance, this would show that if each path has length $D$, then we can obtain an $O(D)$-approximate solution.

The second approach is randomized rounding \cite{raghavan-thompson}: independently set $X_i = j$ with probability  $z_{i,j}$. The standard ``Chernoff bound followed by a union bound over all $m$ rows" analysis of \cite{raghavan-thompson} shows that this works for
\begin{equation} 
\label{eqn:rand-round}
b_k =
\begin{cases}
C \cdot \frac{\log m}{1 + \log( \frac{\log m}{c_k} )} & \text{if $c_k \leq \log m$} \\
c_k + C \sqrt{c_k \log m} & \text{if $c_k > \log m$}
\end{cases}
\end{equation}
where $C$ is some universal constant. In particular, the low-congestion routing problem can be approximated to within $O( \frac{\log m}{ \log\log m} )$, where $m$ denotes the total number of edges. 

Let us compare these known bounds (\ref{eqn:klrtvv}) and (\ref{eqn:rand-round}). The former is good when all the $c_k$ are ``large" (say, much bigger than, or comparable to, $D$); the latter is better when $D$ is large compared to $m$. Can we do better? Our Theorem~\ref{csp-thm}  answers this in the affirmative -- we replace $m$ by $D$ in (\ref{eqn:rand-round}), showing constructively that when $c_k = R \geq 1$ we may set
\begin{equation}
\label{eqn:rand-round2}
b_k = 
\begin{cases}
C \frac{\log D}{1+\log(\frac{\log D}{R})} & \text{if $R \leq \log D$} \\
R + C \sqrt{R \log D} & \text{if $R \geq \log D$}
\end{cases}
\end{equation}

Thus, for the low-congestion routing problem, this would give an approximation ratio $O( \frac{\log D}{\log \log D})$, beating both the union bound and the polyhedral bounds.

We will show in Appendix~\ref{compare-mt-sec} that the MT algorithm cannot directly get ``scale-free'' bounds such as (\ref{eqn:rand-round2}) (that is, $b_k$ is a function of $R, D$ but not of $n$ or $m$). In such cases, MT algorithm is no better than random search, requiring exponential time.

There are two other related results which deserve mention here. First, \cite{llrs,harris-srin-assign-lll} shows a bound similar to (\ref{eqn:rand-round2}) but with $D'$, the maximum number of non-zeroes in any column of $A$, playing the role of $D$. Note that $D' \geq D$ always, and that $D' \gg D$ is possible. Moreover, the bound of \cite{llrs} primarily works when all the $c_k$ are close to each other, and rapidly degrades when these values can be disparate; the bound of \cite{harris-srin-assign-lll} is nonconstructive. 

While we have discussed linear threshold functions here for simplicity, much of the power of the PRA comes from the fact that it is flexible enough to handle complex constraints which have both linear and non-linear components and thus behave ``almost linearly.'' By contrast, approaches such as \cite{klrtvv}, which depend critically on linear algebra, and results such as \cite{llrs} based on multiple phases of resampling, are more difficult to adapt to such problems. Let us summarize two non-linear problems where we obtain improved bounds:

\noindent \textbf{Transversals with omitted subgraphs.} Given a partition of the vertices of an undirected graph $G$ into blocks, a \emph{transversal} is a subset of the vertices, one chosen from each block. 
An \emph{independent transversal}, or independent system of representatives, is a transversal that is also an independent set in $G$. The study of independent transversals was initiated by Bollob\'{a}s, Erd\H{o}s \& Szemer\'{e}di \cite{bollobas-erdos-szemeredi:isr}, and has received a considerable amount of attention (see, e.g., \cite{aharoni-berger-ziv,alon:lin-arb,alon:strong-chi,haxell:struct-indep-set,haxell-szabo-tardos:partitioning,jin:transversals,locally-sparse,szabo-tardos:extremal,yuster:transversals}). Such transversals serve as building blocks for other graph-theoretic parameters such as the linear arboricity and strong chromatic number \cite{alon:lin-arb,alon:strong-chi}.

We improve (algorithmically) a variety of sufficient conditions for the existence of certain transversals. In particular,  Szab\'{o} \& Tardos present a conjecture on the minimum block size to guarantee the existence of transversals that avoid $K_s$ \cite{szabo-tardos:extremal}; we show that this conjecture holds asymptotically for large $s$.  

\noindent \textbf{Packet routing with low latency.} Consider an undirected graph $G$ with $N$ packets, in which we need 
to route each packet $i$ from vertex $s_i$ to vertex $t_i$ along a 
\emph{given simple path} $P_i$. 
The constraints are that each edge can carry only one packet at a time,
and each edge traversal takes unit time for a packet; edges are
allowed to queue packets. A well-known scheduling problem considered in \cite{lmr} is to minimize the \emph{makespan} $T$ (the time by which all packets are delivered).

 Two natural lower-bounds on
$T$ are the \emph{congestion} $C$ (the maximum number of the $P_i$ that
contain any given edge of $G$) and the \emph{dilation} $D$ (the
length of the longest $P_i$); thus, $(C + D)/2$ is a universal lower-bound, and there exist families of instances with
$T \geq (1 + \Omega(1)) \cdot (C + D)$ \cite{rothvoss:cplusd}. A seminal result of \cite{lmr} is
that $T \leq O(C + D)$ for all input instances, using 
constant-sized queues at the edges; the asymptotic notation hides a rather large constant. Building on further improvements \cite{schei:thesis,peis-wiese}, our work \cite{harris-srin-assign-lll} developed a nonconstructive $7.26(C + D)$ and a constructive $8.84(C + D)$ bound; we improve these further to a 
constructive $6.73  (C + D)$. 

\medskip \smallskip

\noindent
\textbf{Informal discussion of the Partial Resampling Algorithm.} 
To understand the intuition behind the PRA, consider bad-events of the  form  $[X_{i_1} = j_1] + \dots + [X_{i_k} = j_k] \geq t$, where the expected value of $[X_{i_1} = j_1] + \dots + [X_{i_k} = j_k]$ is $\mu < t$. Suppose we run the MT algorithm on this problem. We begin by drawing all the variables $X_1, X_2, \ldots, X_n$ independently from their original distributions, and we find some such bad-event has become true. At this point, the MT algorithm would resample all of the variables affected by this event --- that is, all the variables $X_{i_1}, \dots, X_{i_k}$. 

But suppose now that $X_{i_\ell} \neq j_\ell$ for some $\ell \in [k]$. In that case, variable $X_{i_\ell}$ seems like it is ``helpful'' in terms of avoiding this bad-event. Since the goal of resampling a bad-event is to ``fix'' it, then resampling this $X_{i_\ell}$ seems counter-productive. Thus it seems more appropriate to only resample the variables with $X_{i_\ell} = j_\ell$. In the original step of the MT algorithm, the expected number of such variables is $\mu$, and we expect that in intermediate stages of the MT algorithm it should also be close to $\mu$. Thus, heuristically, we should only be resampling about $\mu$ variables, not all $k$ variables.
In fact, even this is somewhat too many variables to resample. Since we expect about $\mu$ variables to have $X_{i_\ell} = j_\ell$, it is only the $t - \mu$ ``extra'' variables which are causing the bad-event to occur. 
Thus, we only sample some of the $X_{i_j}$'s, and we make this choice probabilistically.

The power of the partial-resampling approach comes from the fact it makes steady progress toward a solution --- when there is a bad-event, we make a minimal change to fix it, while preserving as much of the prior solution as possible. For these linear-threshold bad-events, it is only these few, ``guilty'' variables which should be resampled. We thus make much smaller steps to fix any bad-event, making steady progress toward a solution which avoids them all.\footnote{There is an alternative way to apply the LLL in this context, which is to define a separate bad-event for each \emph{atomic} bad configuration, that is, for each subset of variables which exceed value of $t$. This subdivides the original bad-event into approximately $\binom{k}{t}$ separate smaller bad-events. This approach can be effective in some regimes, especially when $t \gg \mu$, and can lead to scale-free configurations. However, this method suffers from the drawback that the sum of the probabilities of these atomic bad-events is much larger than the original probability of the single bad-event (because these atomic events have significant positive correlation). The method we develop will be strictly stronger than this approach.}

In general, the PRA tends to work well when there are common configurations, which are not actually forbidden, but are nonetheless ``bad'' in the sense that they increase the probability of a bad-event. In the case of a sum of random variables, for example, this occurs whenever $X_{i_\ell} = j_\ell$ holds simultaneously for many values of $\ell$. We will see other examples of more complicated types of bad-but-legal configurations.

\smallskip \noindent \textbf{Organization of the paper.} The PRA is discussed in detail in Section~\ref{sec:resampling}. We give criteria, similar to the cluster-expansion LLL, asymmetric LLL, and symmetric LLL, for showing that this algorithm terminates in expected polynomial time. 
Section~\ref{complex-bad-events} describes how to apply the PRA when the bad-events are complex (such as linear-threshold functions) as opposed to pure atomic events. It also provides a more succinct formulation which allows us to reduce the number of parameters. 
Section~\ref{sec:random-sum} shows how to apply the PRA when the underlying bad-events are linear threshold functions. Such events are ubiquitous in combinatorics and algorithms, and the PRA deals with them particularly effectively.
We discuss applications to column-sparse packing (Section~\ref{sec:column-sparse}), transversals with omitted subgraphs (Section~\ref{sec:transversals}), and packet routing (Section~\ref{sec:routing}).

\section{The Partial Resampling Algorithm}
\label{sec:resampling}

\subsection{The variable-assignment setting}
Our algorithms and problems all come from a general class of constraint-satisfaction problems (CSP's) we refer to as the \emph{variable-assignment setting}. We have $n$ variables $X_1, \dots, X_n$; each variable has a finite set $F_i$ of possible values. We define the probability space $\Omega$ in which the variables are assigned independently: namely for each $i \in [n]$ we set $X_i = j$ with probability $p_{i,j}$, where  $j \in F_i$ ranges over the set of valid assignment to variable $i$.  We often omit the set $F_i$ when it is clear from context, e.g., we write simply $\sum_j p_{i,j} = 1$.

As a starting point for our algorithm, we assume there is a  collection $\mathcal B$ of bad-events to avoid, which are all \emph{atomic events} in that each bad-event $B$ can be written as a conjunction of the form
$$
B \equiv \Bigl( (X_{i_1} = j_1) \wedge \dots \wedge (X_{i_k} = j_k) \Bigr)
$$
for $k \geq 1$.

We refer to any ordered pair $( i, j)$ where $j \in F_i$, as an \emph{element}, and let $\mathcal X$ denote the set of all elements. We may encode these types of atomic bad-events by subsets of $\mathcal X$. We thus define the \emph{family of atomic sets} $\mathcal A \subseteq 2^{\mathcal X}$ to be the collection of all non-empty sets $Y \subseteq \mathcal X$ of the form
$$
Y = \{ (i_1, j_1), \dots, (i_k, j_k) \} \qquad \text{where $i_1, \dots, i_k$ are distinct and $j_{\ell} \in F_{i_{\ell}}$ for $\ell = 1, \dots, k$}
$$

We say that an atomic set $Y \in \mathcal A$ \emph{holds} on a configuration $X_1, \dots, X_n$, if $X_i = j$ for all $(i,j) \in Y$. Thus, an atomic event $B \equiv ((X_{i_1} = j_1) \wedge \dots \wedge (X_{i_k} = j_k))$ corresponds to the atomic set $A = \{ (i_1, j_1), \dots, (i_k, j_k) \}$. Here, $B$ is true on $X$ iff $A$ holds on $X$. So $\mathcal B$ can be regarded as subset of $\mathcal A$.

Given any vector $\lambda \in [0,1]^{\mathcal X}$, and any $Y \in \mathcal A$, we define 
\begin{equation}
\label{eqn:power-notation}
\lambda^{Y} = \prod_{x \in Y} \lambda_x
\end{equation}

Thus, for instance, for any $Y \in \mathcal A$, we have $P_{\Omega} (\text{$Y$ holds on $X$}) = p^Y$.

\subsection{Partial Resampling Algorithm and the Main Theorem}
\label{pra-first}
Our algorithm is driven by a parameter, the \emph{fractional hitting-set}, which tells which variables to resample for which bad-event.
\begin{definition}[\textbf{Fractional hitting-set}]
\label{defn:C}
Let $C: \mathcal A \rightarrow [0,1]$ and let $B \in \mathcal A$. We say that $C$ is a \emph{fractional hitting-set for $B$} if
$$
\sum_{\substack{Y \subseteq B \\ Y \neq \emptyset}} C(Y) \geq 1;
$$
 note that this summation treats $Y$ and $B$ as subsets of $\mathcal X$. 

We say that $C$ is a \emph{fractional hitting-set for $\mathcal B$} if $C$ is a fractional hitting-set for every $B \in \mathcal B$.
\end{definition}

We now introduce our main algorithm, which we refer to as the \emph{Partial Resampling Algorithm (PRA)}; this requires as input a fractional hitting-set $Q$ for the set of bad-events $\mathcal B$.
\begin{algorithm}[H]
\caption{$\text{PRA}(p, \mathcal B, Q, \mathcal X) $}
\begin{algorithmic}[1]
\STATE Draw the variables $X_1, \dots, X_n$ independently, where $X_i = j$ with probability $p_{ij}$.
\WHILE{there is a true bad-event $B \in \mathcal B$}
\STATE Select, arbitrarily, some true bad-event $B \in \mathcal B$. We refer to this set $B$ as the \emph{violated set}.
\STATE Select exactly one subset $Y \subseteq B$. The probability of selecting a given $Y$ is given by
$$
\Pr(\text{select $Y$}) = \frac{ Q
(Y) }{\sum_{Z \subseteq B} Q(Z)}.
$$
We refer to $Y$ as the \emph{resampled set}.
\STATE For each $(i,j) \in Y$, draw a new value for $X_i$ independently, using the probability distribution $p_i$. (We refer to this as \emph{resampling} $Y$).
\ENDWHILE
\RETURN $X$
\end{algorithmic} 
\end{algorithm}

The main difference between the PRA and the MT algorithm is that the latter would resample \emph{all} the variables of a true bad-event. The PRA only resamples a (carefully-chosen, random) subset of these variable.  In fact, with an appropriate choice of fractional hitting-set, the \emph{trivial hitting-set}, the PRA essentially degenerates into the MT algorithm.

\begin{definition}[\textbf{Trivial hitting-set}]
We define the \emph{trivial hitting-set} for $\mathcal B$ by  $C(Y) = [Y \in \mathcal B]$ (using Iverson notation as before).
\end{definition}

We will need to keep track of how bad-events (and subsets of bad-events) can be interdependent. This is more complicated than the usual LLL setting, because we distinguish two different types of dependencies: sets $Y, Y'$ could share a variable, \emph{or} they could both be potential resampling targets for some bad-event. The LLL only needs to keep track of the first type of dependency. We introduce the symmetric relations $\sim, \bowtie, \approx$ over $\mathcal A$ to account for these dependency types. These definitions all depend on some fixed choice of fractional hitting-set $Q$; we omit this from the notation for readability.

\begin{definition}[\textbf{Supported event}]
We say that a set $Y \in \mathcal A$ is \emph{supported} if $Q(Y) > 0$.
\end{definition}

\begin{definition}
[\textbf{Symmetric relations $\sim$, $\bowtie$, and $\approx$ on $\mathcal A$}]
\label{defn:symm-rels}
Let $Y, Y' \in \mathcal A$.  We say $Y \sim Y'$ iff
there exists a triple $(i,j,j')$ such that $(i, j) \in Y$ 
and $(i, j') \in Y'$: i.e., iff $Y$ and $Y'$ overlap in a 
variable. We also write $i \sim Y$ (or $Y \sim i$) to mean that $(i, j) \in Y$ for some $j$.

We say $Y \bowtie Y'$ iff $Y \not \sim Y'$ and
there is some event $B \in {\mathcal B}$ with $Y, Y' \subseteq B$. 

We say $Y \approx Y'$ iff either (1) $Y \sim Y'$ or (2) $Y \bowtie Y'$.

We say that $\bowtie$ is \emph{null} if for all supported $Y, Y'$ we have $Y \not \bowtie Y'$.
\end{definition}

Note that if $Y, Y' \subseteq B$ for some bad-event $B$, then $Y \approx Y'$.

In Theorem~\ref{resample-main-thm}, we give three conditions for the PRA to terminate. These conditions are analogous to, respectively, the cluster-expansion LLL criterion \cite{bissacot}, the asymmetric LLL, and the symmetric LLL. Conditions (b) and (c) follow immediately from (a), however we include (b) and (c) here since they are more convenient for typical applications. We begin with a preliminary definition.
\begin{definition}[\textbf{Neighbor-set for $Y$}]
For any  $Y \in \mathcal A$, we say that a set $\mathcal T \subseteq  \mathcal A$ is a \emph{neighbor-set} for $Y$ (and we write $\mathcal T \in \ns(Y)$) if the following conditions hold:
\begin{enumerate}
\item Every $Z \in \mathcal T$ satisfies $Z \approx Y$.
\item There do not exist distinct $Z, Z' \in \mathcal T$ with $Z \sim Z'$.
\item There is \emph{at most} one $Z \in \mathcal T$ with $Z \bowtie Y$.
\end{enumerate}
\end{definition}

\begin{theorem}[\textbf{Main Theorem}]
\label{resample-main-thm}
Let $Q$ be a fractional hitting-set for $\mathcal B$, and let $p \in [0,1 ]^{\mathcal X}$ be the probability vector for $\mathcal X$. In each of the following three cases, the PRA terminates in a feasible configuration avoiding all bad events with probability one. 

\smallskip \noindent 
(a) Suppose that $\mu: \mathcal A \rightarrow [0, \infty)$ satisfies, for all $Y \in \mathcal A$, 
$$
\mu(Y) \geq p^Y Q(Y) \sum_{\mathcal T \in \ns(Y)} \prod_{Y' \in \mathcal T} \mu(Y')
$$
Then, the expected number of resamplings is at most $\sum_{Y} \mu(Y)$.

\smallskip \noindent 
(b) Suppose that $\mu: \mathcal A \rightarrow [0, \infty)$ satisfies, for all $Y \in \mathcal A$, 
$$
\mu(Y) \geq p^Y Q(Y) \Bigl( \prod_{Y' \sim Y} (1 + \mu(Y')) \Bigr) \Bigl( 1 + \sum_{Y'' \bowtie Y} \mu(Y'')  \Bigr)
$$
Then, the expected number of resamplings is at most $\sum_{Y} \mu(Y)$.

\smallskip \noindent 
(c) Suppose that  $p^Y Q(Y) \leq P$ for all $Y \in \mathcal A$; and suppose that for all supported $Y$, there are \emph{at most} $D$ supported $Y'$ with $Y' \approx Y$ (we allow $Y' = Y$ here). And suppose finally that $e P D \leq 1$. Then, the expected number of resamplings is at most $e \sum_{Y} p^Y Q(Y)$.
\end{theorem}

The proof of Theorem~\ref{resample-main-thm} is lengthy; we will spend the next sections showing a number of preliminary lemmas.

\subsection{Analyzing the PRA: witness trees and the resampling table}
Our analysis of the PRA is based on \emph{witness trees} and a coupling construction called the \emph{resampling table}, two proof techniques developed by Moser \& Tardos in \cite{moser-tardos:lll}. 

We first describe the resampling table. In the PRA we have described, we generate new values for the variables $X_i$ as they are needed. Instead, we may imagine generating a resampling table $R(i,k)$ where $i$ ranges over $[n]$ and $k$ ranges over positive integers, whose entries are independent random variables where entry $R(i,k)$ is drawn from the distribution $p_i$. Once we have drawn this table, we use its values in place of resampling. For instance, in step (1) of the PRA, we set $X_i = R(i,1)$ for every $i \in [n]$. The first time we resample $X_i$, we set $X_i = R(i,2)$, and so on. It is clear that pre-generating the randomness does not affect the behavior of the PRA. We also observe that after drawing the resampling table, the only source of randomness remaining in the PRA is the choice of which subset $Y \subseteq B$ to select when resampling a bad-event $B$.

The witness tree $\hat \tau^t$ describes the history of the variables involved in the resampling up to some time $t$. Suppose we run the PRA for $t$ timesteps (not necessarily to completion) and that at each time $k = 1, \dots, t$ the violated bad-event is $B_k$, with resampled set $Y_k \subseteq B_k$.  We define $U_k$ to the ordered pair $(Y_k, B_k)$ for each time $k$.  We will construct an associated random variable $\hat \tau^t$, which is a type of rooted labeled tree.

To begin, we recursively define $\hat \tau_k^t$ for $k= t, t-1, \dots, 1$. We begin with  $\hat \tau^t_t$ which has a single root node labeled $Y_t$.  Next, letting $L(v)$ denote the label of a node $v$, we define $M_k^t$ for each $k < t$ to be the set of nodes $v \in \hat \tau^t_{k+1}$ with either of the two properties:
\begin{enumerate}
\item $L(v) \sim Y_k$; OR
\item $L(v) \bowtie Y_k$ and $v$ does not have any child $u$ such that $L(u) \bowtie L(v)$,
\end{enumerate}

If $M_k^t$ is empty, then we update $\hat \tau^t_{k} = \hat \tau^t_{k+1}$. Otherwise, we select some node $v \in M_k^t$ of greatest depth in $\hat \tau_{k+1}^t$ (breaking ties arbitrarily), and form $\hat \tau^t_k$ by adding a new node which is a child of $v$ and is labeled by $Y_k$. We finish by defining $\hat \tau^t = \hat \tau^t_1$.  

Note that the witness tree $\hat \tau^t$ does \emph{not} record information about the violated sets $B_k$ themselves; this is a critical step to cut down the number of possible trees.

By convention, for $s > t$ we define $\hat \tau_s^t = \emptyset$ (the null tree, which does not contain any nodes -- not even the root node). If the PRA has terminated already by time $t$, then we also define $\hat \tau^t = \emptyset$.

We define a \emph{tree-structure} to be any rooted tree whose nodes are labeled from the set $\mathcal A$. Every $\hat \tau^t$ is a tree-structure. We say that a tree-structure $\tau$ \emph{appears} if $\hat \tau^t = \tau$ for any value of $t$. We define the \emph{weight} of tree-structure $\tau$ by
$$
w(\tau) = \prod_{v \in \tau} p^{L(v)} Q(L(v))
$$
\smallskip \noindent \textbf{Remark.} Recall the notation (\ref{eqn:power-notation}) in parsing the value ``$p^{L(v)}$" above.

We note a few simple facts about witness trees.
\begin{proposition}
\label{distinct-ts-prop}
If $t < t'$ and the PRA has not terminated by time $t'$, then $\hat \tau^t \neq \hat \tau^{t'}$.
\end{proposition}
\begin{proof}
Let $Y_t, Y_{t'}$ be resampled sets at times $t, t'$ respectively. If $Y_t \neq Y_{t'}$, then the result holds because the root nodes of $\hat \tau^t, \hat \tau^{t'}$ have different labels. Otherwise, note that for every time $s$ with $Y_s = Y_t$, we will have $M_s^t \neq \emptyset$ and $M_s^{t'} \neq \emptyset$ (since the root nodes of $\hat \tau^t$ and $\hat \tau^{t'}$ respectively will be in these sets). Thus, $\hat \tau^{t'}$ will contain strictly more nodes with label $Y_t$ than does $\hat \tau^t$.
\end{proof}

\begin{proposition}
\label{ind-layer-prop}
If $v, v'$ are  distinct nodes at depth $h$ in $\hat \tau^t$, then $L(v) \not \sim L(v')$. 
Furthermore, if both $v, v'$ are leaf nodes, then $L(v) \not \bowtie L(v')$.
\end{proposition}
\begin{proof}
Suppose that $v, v'$ correspond to resamplings at time $s, s'$ respectively where $s < s' \leq t$. If $L(v) \sim L(v')$, then  $v' \in \hat \tau^t_s$ and so $v' \in M_s^t$. So $v$ will be eligible to be placed as a child of $v'$ or some other node of greater depth. In particular, $v$ will be placed at depth strictly larger than $h$. If $v, v'$ are leaf nodes and $L(v) \bowtie L(v')$, then, again $v' \in M^t_s$ so the same argument would apply.
\end{proof}

\subsection{The Witness Tree Lemma.}
The key to analyzing the PRA is the following lemma:

\begin{lemma}[\textbf{Witness Tree Lemma}]
\label{couple-lemma}
Any tree-structure $\tau$ appears with probability at most $w(\tau)$.
\end{lemma}

We prove this in two stages. We first connect the behavior of a witness trees to the resampling table, and show certain necessary conditions on the entries of $R$. This part of the proof is nearly identical to that of Moser \& Tardos. Second, we show that even after fixing $R$, there is still enough randomness remaining in the PRA to further bound the probability of a tree-structure appearing. 

Let $r$ be a possible value for the resampling table. We say a tree-structure $\tau$ is \emph{compatible} with $r$ if there is a non-zero probability of $\hat \tau^t$ appearing, conditioned on $R = r$.

\begin{proposition}[\cite{moser-tardos:lll}]
\label{compat-prop}
The probability that tree-structure $\tau$ is compatible with $R$ is at most $\prod_{v \in \tau} p^{L(v)}$.
\end{proposition}
\begin{proof}
Let $i \in [n]$, and consider the set of nodes $v \in \tau$ such that $L(v) \sim i$. By Proposition~\ref{ind-layer-prop}, all such nodes are at distinct depths in $\tau$. Let us sort these nodes in decreasing order of depth as $v_1, \dots, v_{s_i}$; for $k = 1, \dots, s_i$ let us say that $(i,a_{ik}) \in L(v_k)$. 

Suppose that $\hat \tau^t = \tau$ for some time $t$. We claim that $R(i,1) = a_{i1}$. For, suppose not; consider the first time $t'$ that $L(v_1)$ is the resampled set (this must occur, since $v_1 \in \hat \tau^t$). At time $t'$, we have $X_i = a_1$; so, $X_1$ must have been resampled earlier, at time $t''$. Let $Y''$ be the resampled set at time $t''$; so $i \sim Y''$. But then when constructing $\hat \tau^t$, we would place an additional node labeled $Y''$ at greater depth than $v_1$, leading to a contradiction. 

Continuing in this way, we see that $R(i,k) = a_{ik}$ for $i \in [n]$ and $k = 1, \dots, s_i$. Since the entries of $R$ are all independent, the overall probability of this event is 
$$
\prod_{i=1}^n \prod_{k=1}^{s_i} p_{i, a_{ik}}
$$

If $(i,j) \in L(v)$ for any node $v \in \tau$, then this contributes exactly one factor of $p_{ij}$ to this expression. So, we can rearrange this term as 
\[
\prod_{i=1}^n \prod_{k=1}^{s_i} p_{i, a_{ik}} = \prod_{v \in \tau} \prod_{(i,j) \in L(v)} p_{ij} = \prod_{v \in \tau} p^{L(v)} \qedhere
\]
\end{proof}

Before the second part of proof of the Witness Tree Lemma, let us give some intuition to the role of $\bowtie$. 

Consider a tree-structure $\tau$ consisting of a singleton node labeled $Y$. We want to show that $\tau$ appears with probability at most $w(\tau) = p^Y Q(Y)$. We have already seen that the probability that $R$ is compatible with $\tau$ is at most $p^Y$. We would next like to say that the probability that $Y$ was selected as the resampled set is at most $Q(Y)$, giving us our probabilistic bound.

\emph{On any given instance in which $Y$ is eligible to be the resampled set}, the probability of selecting $Y$ is indeed at most $Q(Y)$. However, there may have been a long sequence of bad-events in which $Y$ was eligible yet not selected. With enough of these opportunities,  the probability of eventually selecting $Y$ approaches $1$. Thus in order to obtain a useful bound on the probability of selecting $Y$, one must distinguish in advance a \emph{specific} instance in which $Y$ is eligible.

Now observe that if $Y \subseteq B$ for some true bad-event $B$, but we instead select some other $Y' \subseteq B$ as the resampled set, then $Y' \approx Y$.  So $Y'$ would be added as a child of $Y$ in the witness tree. As $\tau$ is a singleton node, then a necessary condition for $\tau$ to appear is that \emph{$Y$ is selected the first time it is eligible to be selected.}  With some thought, we see that this event has probability at most $Q(Y)$.

The reader should bear this intuition in mind for the remainder of the proof. Proposition~\ref{tree-change-prop} extends this to larger tree-structures, which can have more complex interactions.

\begin{proposition}
\label{tree-change-prop}
Suppose the PRA has not terminated by time $s$, and let $U_s = (Y_s, B_s)$. For some integer $t \geq s$, define $J$ to be the set of leaf nodes $v$ of $\hat \tau_s^t$ such that $L(v) \subseteq B_s$. Then:
\begin{enumerate}
\item $J$ cannot contain two nodes at the same depth.
\item If $J \neq \emptyset$ and $v$ is the (unique) vertex in $J$ of greatest depth, then $L(v) = Y_s$ and $\hat \tau_{s+1}^t = \hat \tau_s^t - v$. 
\item If $J = \emptyset$, then $\hat \tau_{s+1}^t = \hat \tau_s^t$.
\end{enumerate}
\end{proposition}
\begin{proof}
If $s = t$, then $\hat \tau_s^t$ contains a single node $v$ labeled $Y_s$ and $\hat \tau_{s+1}^t = \emptyset$ and $J = \{ v \}$.  In this case all the three parts hold easily. So let assume that $s < t$.

Part (1) follows immediately from Proposition~\ref{ind-layer-prop}, noting that $J$ contains only  leaf nodes.

For (2), let $v$ be the greatest-depth node of $J$. Suppose that $L(v) = Y' \neq Y_s$. We must have $v \in \hat \tau_{s+1}^t$ (the only node added to $\hat \tau_s^t$ has label $Y_s$). As $Y_s \approx Y'$, we have $v \in M_s^t$, and so $\hat \tau_s^t$ would have a new leaf node $w$ labeled $Y_s$ at greater depth than $v$; but then $w \in J$ has greater depth than $v$, a contradiction. 

Thus, we have shown that $L(v) = Y_s$. Observe that $\hat \tau_s^t$ is either equal to $\hat \tau_{s+1}^t$, or has a single leaf node $w$ added to $\hat \tau_{s+1}^t$ labeled $Y_s$. In the latter case, $w$ will have label $Y_s$ and will be at greatest depth in $\hat \tau_s^t$, and so $w = v$ and part (2) follows. In the former case, $v \in \hat \tau_{s+1}^t$. Since $L(v) = Y_s$, we will have $v \in M_s^t$, so again $\hat \tau_s^t$ would have a new leaf node $w$ labeled $Y_s$ at greater depth than $v$; but then $w \in J$ has greater depth than $v$, a contradiction.

To show (3), suppose that $\hat \tau_s^t \neq \hat \tau_{s+1}^t$. Then $\hat \tau_s^t$ has a new leaf node $v$ labeled $Y_s$. This node $v$ would be in $J$, contradicting that $J = \emptyset$.
\end{proof}

We are now ready to prove the Witness Tree Lemma.
\begin{proof}[Proof of Lemma~\ref{couple-lemma}]
To simplify the notation, let us define $Q(v) := Q(L(v))$ for any node $v$ of $\tau$. In analyzing the PRA, there are two types of random variables: first, there is the resampling table $R$; second, there are the random variables $U_k$. By our coupling construction, the full table $R$ is constructed at time $0$; thus, we may view the PRA as a stochastic process which sequentially generates the  random variables $R, U_1, U_2, \dots, $.

We will prove a stronger result: for all pairs of integers $s, T$ with $1 \leq s \leq T$, and all tree-structures $\tau$, we have
\begin{equation}
\label{induct-prop}
\Pr \bigl( \bigvee_{t = s}^{T-1} \hat \tau_s^t = \tau \mid R, U_1, \dots, U_{s-1} \bigr) \leq \prod_{v \in \tau} Q(v)
\end{equation}

We may assume that $\tau \neq \emptyset$, as otherwise the RHS of (\ref{induct-prop}) equal one and the bound holds vacuously. We prove this by induction backward on $s$. The base case is $s = T$; in this case LHS of (\ref{induct-prop}) is zero so the bound holds vacuously. We move to the induction step. Suppose that we condition on $R, U_1, \dots, U_{s-1}$. If there are no more true bad-events, then (as $\tau \neq \emptyset$) the bound holds since the LHS of (\ref{induct-prop}) equals zero.  Otherwise, let us condition as well on the violated set $B_s$. Define $J$ to be the set of leaf nodes $v$ of $\tau$ such that $L(v) \subseteq B_s$.

By Proposition~\ref{tree-change-prop}, in order to $\tau$ to appear, it must either be the case that $J = \emptyset$, or that $J$ contains a unique node at greatest depth.

In the first case, then by Proposition~\ref{tree-change-prop} we have $\hat \tau_{s+1}^t = \hat \tau_s^t$. So a necessary condition to have $\tau_s^t = \tau$ is to have $\tau_{s+1}^t = \tau$ for some $t$ in the range $s+1 \leq t < T$. By the induction hypothesis, this has probability at most $\prod_{v \in \tau} Q(v)$ (even after we condition on the random variable $U_s$).

In the second case, suppose that the greatest-depth node of $J$ is $v$, with $L(v) = Y$.  Then Proposition~\ref{tree-change-prop} shows that $Y$ must be the resampled set at time $s$ and $\hat \tau^t_{s+1} = \hat \tau^t_s - v$.  The probability of selecting $Y$ is at most $\frac{Q(Y)}{\sum_{Z \subseteq B} Q(Z)} \leq Q(Y) = Q(v)$. 

By the induction hypothesis, the probability that there is some $t$ with $\hat \tau^t_{s+1}= \tau - v$ is at most $\prod_{u \in \tau - v} Q(u)$, even after conditioning on $R, U_1, \dots, U_s$. Because of this conditional probability bound, we can multiply the two probabilities; so the overall probability of both selecting $Y$ and having $\hat \tau_{s+1}^t = \tau - v$, is at most $Q(v) \times \prod_{u \in \tau - v} Q(u) = \prod_{u \in \tau} Q(u)$, and the induction again holds.

We now move on to prove the full result. By Proposition~\ref{compat-prop}, the resampling table is compatible with $\tau$ with probability at most $\prod_{v \in \tau} p^{L(v)}$. We have shown that 
$$
\Pr(\bigvee_{t = 1}^T \hat \tau^t = \tau \mid R) \leq \prod_{v \in \tau} Q(v)
$$
So, multiplying these two probabilities gives
$$
\Pr( \bigvee_{t = 1}^T \hat \tau^t = \tau) \leq \bigl( \prod_{v \in \tau} Q(v) \bigr) \bigl( \prod_{v \in \tau} p^{L(v)} \bigr) = w(\tau)
$$

Taking the limit as $T \rightarrow \infty$:
\[
\Pr( \bigvee_{t \geq 1} \hat \tau^t = \tau) = \lim_{T \rightarrow \infty} \Pr( \bigvee_{t=1}^T \hat \tau^t = \tau) \leq \lim_{T \rightarrow \infty} w(\tau) = w(\tau) \qedhere
\]
\end{proof}

\subsection{Finishing the proof: counting witness trees}
To finish the proof of Theorem~\ref{resample-main-thm} and show the convergence of the PRA, we must count the weight of certain classes of witness trees.

\begin{definition}[\textbf{Proper tree-structure}]
A tree-structure $\tau$ is \emph{proper} if it has the following property: for every node $v$ with children $w_1, \dots, w_s$, the labels $L(w_1), \dots, L(w_s)$ are all distinct and $\{ L(w_1), \dots, L(w_s) \} \in \ns(L(v))$.

We let $\Gamma$ denote the set of all proper tree-structures, and $\Gamma(Y)$ the set of non-empty proper tree-structures whose root node has label $Y$.
\end{definition}

It is clear that if $\tau$ is a proper tree-structure, all of its subtrees are proper tree-structures as well.  
\begin{proposition}
\label{proper-ts}
The tree-structure $\hat \tau^t$ is proper.
\end{proposition}
\begin{proof}
Consider some node $v \in \hat \tau^t$ with children $w_1, \dots, w_s$. By Proposition~\ref{ind-layer-prop}, we have $L(w_i) \not \sim L(w_j)$ for $i \neq j$. Hence, $L(w_1), \dots, L(w_s)$ are distinct. Let $\mathcal T = \{L(w_1), \dots, L(w_s) \}$; we have thus shown that $\mathcal T$ satisfies property (2) of the definition of neighbor-set.

To show part (1), suppose that $w_i$ is added as a child of $v$ in forming $\hat \tau^t_k$. So $v \in M_k^t$ and $L(w_i)$ is the resampled set $Y_j$ at time $k$. By definition of $M_k^t$, we have $Y_k \approx L(v)$ and so $L(w_i) \approx L(v)$.

To show part (3), suppose that $L(w_i) \bowtie L(v)$ and $L(w_j) \bowtie L(v)$, and suppose that $w_i, w_j$ correspond to resamplings at times $s, s'$ respectively where $s < s'$. But then note that $v$ already has a child $w_j$ in $\hat \tau^t_{s}$ with $L(w_j) \bowtie L(v)$, so that $v \notin M_{s}^t$. So $w_i$ cannot be added as a child at time $s$, a contradiction.
\end{proof}
 
\begin{proposition}
\label{wt-bound-prop}
If the function $\mu$ satisfies Theorem~\ref{resample-main-thm}(a), then every $Y \in \mathcal A$ has
$$
\sum_{\tau \in \Gamma(Y)} w(\tau) \leq \mu(Y)
$$
\end{proposition}
\begin{proof}
For any $Y \in \mathcal A$, let $T_h(Y)$ be the total weight of all $\tau \in \Gamma(Y)$ whose height is at most $h$. We show that $T_h(Y) \leq \mu(Y)$ for all $Y \in \mathcal A$ and $h \geq 0$, by induction on $h$. When $h = 0$ this is vacuously true. Now, consider some $\tau \in \Gamma(Y)$ of height at most $h$. Suppose that the children of the root node are $w_1, \dots, w_s$ (with possibly $s = 0$), with labels $Y_1, \dots, Y_s$ respectively.

If $\tau_1, \dots, \tau_s$ are the subtrees of each $w_i$, then each $\tau_i$ has height at most $h-1$ and $\tau_i \in \Gamma(Y_i)$.  Also, note that $w(\tau) = p^Y Q(Y) w(\tau_1) \cdots w(\tau_s)$. So, for a fixed value of $Y_1, \dots, Y_s$, the total weight of all such proper tree-structures $\tau$ is (by induction hypothesis) at most $p^Y Q(Y) T_{h-1}(\tau_1) \cdots T_{h-1}(\tau_s) \leq p^Y Q(Y) \mu(Y_1) \cdots \mu(Y_s)$.

By definition, $Y_1, \dots, Y_s$ are distinct and $\{Y_1, \dots, Y_s \} \in \ns(Y)$.  Summing over all such neighbor-sets, we see that 
$$
T_h(Y)  \leq \sum_{ \mathcal T \in \ns(Y) } p^Y Q(Y) \prod_{Y' \in \mathcal T} \mu(Y')
$$

By Theorem~\ref{resample-main-thm}(a), the RHS of this expression is at most $\mu(Y)$.  So we have shown that $T_h(Y) \leq \mu(Y)$ for all integer $h \geq 0$. This shows that $T_{\infty} (Y) \leq \lim_{h \rightarrow \infty} T_h(Y) \leq \lim_{h \rightarrow \infty} \mu(Y) = \mu(Y)$, completing the proof.
\end{proof}

\begin{proof}[Proof of Theorem~\ref{resample-main-thm}]
Let us first show Theorem~\ref{resample-main-thm}(a).  Suppose that the PRA runs for $t$ or more time-steps. By Proposition~\ref{distinct-ts-prop} $\hat \tau^1,  \dots, \hat \tau^{t}$ are distinct, non-empty, appearing tree-structures. By Proposition~\ref{proper-ts} they are all proper. So the number of resamplings is at most the number of appearing proper tree-structures, i.e.
{\allowdisplaybreaks
\begin{align*}
\bE[ \text{\# Resamplings}] &\leq \sum_{\tau \in \Gamma} \Pr( \text{$\tau$ appears}) \leq \sum_{\tau \in \Gamma} w(\tau) \qquad \text{(by Lemma~\ref{couple-lemma})} \\
&= \sum_{Y \in \mathcal A} \sum_{\tau \in \Gamma(Y)} w(\tau) \leq \sum_{Y \in \mathcal A} \mu(Y) \qquad \text{(by Proposition~\ref{wt-bound-prop})}
\end{align*}
}

In particular, since this is finite, the algorithm terminates with probability one.

We derive Theorem~\ref{resample-main-thm}(b) by using the following method to enumerate neighbor-sets $\mathcal T$. First, we put into $\mathcal T$ either one element $Y''$ with $Y'' \bowtie Y$, or no such elements; this contributes a factor $\Bigl( 1 + \sum_{Y'' \bowtie Y} \mu(Y'') \Bigr)$.  Next, we may place any $Y' \sim Y$ into $\mathcal T$. For each $Y'\sim Y$, this contributes the term $1 + \mu(Y')$. As every neighbor-set $\mathcal T$ is generated in this way, we have
$$
\sum_{\mathcal T \in \ns(Y) } \prod_{Y' \in \mathcal T} \mu(Y') \leq \Bigl( \prod_{Y' \sim Y} (1 + \mu(Y')) \Bigr) \Bigl( 1 + \sum_{Y'' \bowtie Y} \mu(Y'') \Bigr)
$$

Finally, we derive Theorem~\ref{resample-main-thm}(c) by setting $\mu(Y) = e p^Y Q(Y)$ for all $Y$. This satisfies Theorem~\ref{resample-main-thm}(b), as any $Y \in \mathcal A$ has
\begin{align*}
  \Bigl( \prod_{Y' \sim Y} (1 + \mu(Y')) \Bigr) \Bigl( 1 + \sum_{Y'' \bowtie Y} \mu(Y'') \Bigr) &\leq e^{\sum_ {Y' \sim Y} \mu(Y')} e^{ \sum_{Y'' \bowtie Y} \mu(Y'')} = e^{\sum_ {Z \approx Y} e p^{Z} Q(Z)} \leq e^{e D P} \leq e
\end{align*}

Therefore, we have
$$
p^Y Q(Y) \Bigl( \prod_{Y' \sim Y} (1 + \mu(Y')) \Bigr) \Bigl( 1 + \sum_{Y'' \bowtie Y} \mu(Y'') \Bigr) \leq e p^Y Q(Y) = \mu(Y)
$$
satisfying Theorem~\ref{resample-main-thm}(b).
\end{proof}

\section{Extension to complex bad-events}
\label{complex-bad-events}
Many applications of the LLL involve multiple bad events that may be more complex than pure atomic events; for example, they may be defined by linear threshold functions. We can always write a complex bad event as a union of a (possibly large) number of atomic bad-events. For example, a bad-event of the form $[X_{i_1} = j_1] + \dots + [X_{i_r} = j_r] \geq  t$ can be represented as $\binom{r}{t}$ separate atomic events.  Thus, we suppose that there are multiple bad-events $\mathcal B_1, \dots, \mathcal B_K$, where each $\mathcal B_k$ is a subset of $\mathcal A$; the sets $\mathcal B_1, \dots, \mathcal B_K$ are not necessarily disjoint. 

It would be natural to apply Theorem~\ref{resample-main-thm} directly on the bad-event set $\mathcal B = \mathcal B_1 \cup \dots \cup \mathcal B_K$. However, there are two technical obstacles to this. The first difficulty is that Theorem~\ref{resample-main-thm} requires us to bound the connection (in terms of the relation $\approx$) between subsets of events of $\mathcal B_k$ and $\mathcal B_\ell$ for $k \neq \ell$. The linkages due to $\sim$ are relatively easy to handle in this way, and are similar to the usual asymmetric LLL. But the linkages due to $\bowtie$ are much more difficult to enumerate and analyze.

In order to avoid this first problem, we will ``decouple'' $\mathcal B_1, \dots, \mathcal B_K$; we analyze each $\mathcal B_k$ separately, deriving an appropriate fractional hitting-set $Q_k$ and computing an appropriate potential function for $\mathcal B_k$. We then sum this potential function over $k = 1, \dots, K$.  In particular, we never need to analyze $\bowtie$-interactions between $\mathcal B_k, \mathcal B_{\ell}$. 

The second technical difficulty is that Theorem~\ref{resample-main-thm} requires defining an auxiliary function $\mu$, and checking a condition on it, for every $Y \in \mathcal A$. Because of our method for decoupling the bad-events, we would actually need to check a separate condition for every $Y \in \mathcal A$  as well as each $k = 1, \dots, K$. This results in a huge inflation in the number of parameters. In order to avoid this, we reparametrize in terms of a more compact auxiliary function, consisting of just variables $\lambda_{i,j}$ for each \emph{element} $(i,j) \in \mathcal X$. We describe how to encode $\mu$ as a function  of $\lambda$, and how to check a much more manageable set of conditions on it.

\subsection{Decoupling bad-events}
Given fractional hitting-sets $Q_1, \dots, Q_K$ for $\mathcal B_1, \dots, \mathcal B_K$ respectively, we will use the following Algorithm~\ref{pra-multi-alg}. This is a slight modification of the PRA given in Section~\ref{pra-first}.

\begin{algorithm}[H]
\caption{$\text{PRA-multi}(p, \mathcal B_1, \dots, \mathcal B_K, Q_1, \dots, Q_K, \mathcal X) $}
\label{pra-multi-alg}
\begin{algorithmic}[1]
\STATE Draw the values $X_1, \dots, X_n$ independently according to the probability distribution $p$.
\WHILE{there is some true bad-event $B \in \mathcal B_1 \cup \dots \cup \mathcal B_K$}
\STATE Arbitrarily select some index $k \in [K]$ and some $B \in \mathcal B_k$ such that $B$ is true.
\STATE Select exactly one subset $Y \subseteq B$. The probability of selecting a given $Y$ is given by
$$
\Pr(\text{select $Y$}) = \frac{ Q_k(Y) }{\sum_{Z \subseteq B} Q_k(Z)}
$$
\STATE Resample every variable involved in $Y$ independently according to probability distribution $p$.
\ENDWHILE
\STATE Return $X$
\end{algorithmic}
\end{algorithm}

We analyze PRA-multi by coupling it to the original version of the PRA when run on a larger set of variables and an appropriate set of bad-events $\tilde {\mathcal B}$. In effect, we encode $\mathcal B_1, \dots, \mathcal B_K$ so that $\tilde {\mathcal B}$ represents the \emph{disjoint} union of $\mathcal B_1, \dots, \mathcal B_K$. 

This new problem instance has the same $n$ variables as the original instance, plus $K |\mathcal A|$ new indicator variables, which we will index as $\langle Y, k \rangle$ for $k \in [K]$ and $Y \in \mathcal A$.  We set $F_{\langle Y, k \rangle} = \{ 0 \}$, i.e.,  each new variable can only take on a single value. Thus, the new set of elements  $\tilde {\mathcal X}$   is given by
$$
\tilde {\mathcal X} = \mathcal X  \cup \Big \{ ( \langle Y, k \rangle, 0) \mid k \in [K], Y \in \mathcal A \Big \} 
$$

To simplify notation in the construction, let us define $[Y,k]$ to denote the singleton set  $[Y,k] =  \{ (\langle Y, k \rangle,0) \}$. We construct the new set of bad-events $\tilde{ \mathcal B}$ as:
$$
\tilde {\mathcal B} = \Bigl \{ B \cup \bigcup_{Y \in \mathcal A} [Y,k] \mid B \in \mathcal B_k \Bigr \}
$$

The following definitions will be used throughout our construction.  For any $Y \in \mathcal A$ and $k \in [K]$, let us define
\begin{equation}
\label{crosseqn}
(Y,k) = Y \cup [Y,k] = Y \cup \{ (\langle Y, k \rangle, 0) \}
\end{equation}

\begin{definition}[\textbf{Good set}] 
We say that a set $Z \subseteq \tilde {\mathcal X}$ is \emph{good} if it has the form $Z =  (Y,k)$ where $Y \in \mathcal A$ and $k \in [K]$.  We say that a set $\mathcal Z \subseteq 2^{\tilde {\mathcal X}}$ is good if every member $Z \in \mathcal Z$ is good.
\end{definition}

Using the notation (\ref{crosseqn}), we often identify the collection of good sets with the space $\mathcal A \times [K]$. (Note that every good set has a \emph{unique} representation as $(Y,k)$).

With these definitions, we construct the corresponding fractional hitting-set $\tilde Q$ for $\tilde {\mathcal B}$ by
$$
\tilde Q(Z) = \begin{cases}
Q_k(Y) & \text{if $Z$ is a good set of the form $Z = (Y,k)$} \\
0 & \text{otherwise}
\end{cases}
$$

We next show that the original PRA on $\tilde{\mathcal B}$ is equivalent to the variant PRA on $\mathcal B$.
\begin{proposition}
\label{decoupling-tog-prop}
Consider running PRA-multi on input $p, \mathcal B_1, \dots, \mathcal B_K, Q_1, \dots, Q_K, \mathcal X$, as well as running the original PRA on input $\tilde p, \tilde{\mathcal B}, \tilde Q, \tilde {\mathcal X}$, where we define $\tilde p$ by
$$
\tilde p_{ij} =
\begin{cases}
p_{ij} & \text{if $i \in [n]$} \\
1 & \text{if $i = \langle Y, k \rangle$}
\end{cases}
$$

With appropriate choices for the resampling rule in the two algorithms,  the probability distribution on the values of $X_1, \dots, X_n$ after $t$ time steps is the same for the two algorithms.
\end{proposition}
\begin{proof}
We use a coupling construction where we run both the original and variant PRA in parallel, taking a common source of randomness for the two algorithms, such that the variables $X_1, \dots, X_n$ have the same value at each time $t$. For PRA-multi, there are additional variables $X_{\langle Y, k \rangle}$; but these always have the value $0$. At time $t = 0$, the two algorithms obviously agree on $X$, since the first step of each is to sample $X_1, \dots, X_n$ from the probability distribution $p$. (PRA-multi also samples, trivially, the variables $X_{\langle Y, k \rangle}$.)

Consider now some time $t > 0$. Let us first suppose that in the original PRA, all the bad-events $\tilde B$ are false. Since  $X_{\langle Y, k \rangle} = 0$ for every $Y \in \mathcal A$, it must be that every bad-event $B \in \mathcal B_k$ is false as well. So, in this case, the two algorithms both terminate and return the common vector $X$.

Now suppose that in the original PRA, the violated set at time $t$ is $\tilde B$,  where $\tilde B =  B \cup \bigcup_{Y \in \mathcal A} [Y,k]$ and $B \in \mathcal B_k$. This implies that $B$ holds on $X_1, \dots, X_n$. PRA-multi may correspondingly use the resampling rule of selecting $k, B \in \mathcal B_k$ in step (3).

In step (4) of the original PRA, we select some $\tilde Y \subseteq \tilde B$ with probability proportional to $\tilde Q( \tilde Y)$. Since $\tilde Q$ is only supported on good sets, we must have $\tilde Y = (Y, k')$ for some $k' \in [K], Y \in \mathcal A$. Since $Y, B \in \mathcal A$ we must have $Y \subseteq B$. Furthermore, since the only entries of $\tilde B$ are $B$ and some sets $[Y',k]$, we must have $k = k'$. Thus, $\tilde Y = (Y, k)$ for some $Y \subseteq B$. Then $\tilde Q( \tilde Y) = Q_k(Y)$.  So step (4) of the original PRA can be viewed as selecting a set $(Y,k)$ where $Y \subseteq B$ in which we a given $Y$ is chosen with probability proportional to $Q_k(Y)$.

Finally, in step (5) of the original PRA, we resample $X_i$ for every $i \sim Y$ (as well as the variable $X_{\langle Y, k \rangle}$); this is precisely what is done in step (5) of PRA-multi. So, if we use the same random bits for the two resamplings, then the values of $X$ agree at time $t+1$ as well.
\end{proof}

In light of Proposition~\ref{decoupling-tog-prop}, we need to satisfy Theorem~\ref{resample-main-thm} for the new problem instance. The following results translate the notations between the sets $\mathcal B_1, \dots, \mathcal B_K$ and their encoding into $\tilde {\mathcal B}$. These proofs are omitted.

\begin{definition}[\textbf{Symmetric relation $\bowtie_k$}]
Given $Y, Y' \in \mathcal A$, we say that $Y \bowtie_k Y'$ if $Y \not\sim Y'$ and there is some $B \in \mathcal B_k$ with $Y, Y' \subseteq B$
\end{definition}

\begin{proposition}
For pairs $(Y, k), (Y', k')$ where $Y, Y' \in \mathcal A$ and $k, k' \in [K]$ the following hold:
\begin{enumerate} 
\item $(Y,k) \sim (Y', k')$ iff $Y \sim Y'$.
\item $(Y,k) \bowtie (Y', k')$ iff $k = k'$ and $Y \bowtie_k Y'$.
\end{enumerate}
\end{proposition}

We say that $(Y,k)$ is \emph{supported} if $Q_k(Y) > 0$. We say that $\bowtie$ is \emph{null} if  $Y \not \bowtie_k Y'$ for all $k \in [K]$, and all sets $Y, Y'$ such that $Q_k(Y) > 0, Q_k(Y') > 0$.

\begin{proposition}
For any $Y \in \mathcal A$ and $k \in [K]$, the set $\mathcal T \subseteq \mathcal A \times [K]$ is a good neighbor-set of $(Y,k)$ (and we write $\mathcal T \in \gns(Y,k)$) if the following conditions hold:
\begin{enumerate}
\item Every $(Z,\ell) \in \mathcal T$ has either (i) $Z \sim Y$ or (ii) $\ell = k$ and $Z \bowtie_k Y$.
\item There do not exist distinct $(Z, k), (Z', k') \in \mathcal T$ with $Z \sim Z'$.
\item There do not exist distinct $(Z,k), (Z', k) \in \mathcal T$ with $Z \bowtie_k Y, Z' \bowtie_{k} Y$.
\end{enumerate}
\end{proposition}

We can now state our main theorem, translated into the new encoding:
\begin{theorem}[\textbf{Main Theorem for multiple events}]
\label{resample-main-thm-2}
Let $Q_1, \dots, Q_K$ be fractional hitting-sets for $\mathcal B_1, \dots, \mathcal B_K$ respectively. In each of the following three cases, PRA-multi terminates in a configuration avoiding all bad events with probability one. 

\smallskip \noindent 
(a) Suppose that $\mu: \mathcal A \times [K] \rightarrow [0, \infty)$ satisfies, for all $Y \in \mathcal A$ and all $k \in [K]$, 
$$
\mu(Y, k) \geq p^Y Q_k(Y) \sum_{\mathcal T \in \gns(Y,k)} \prod_{(Y',k') \in T} \mu(Y',k')
$$
Then, the expected number of resamplings is at most $\sum_{(Y,k)} \mu(Y,k)$.

\smallskip \noindent 
(b) Suppose that $\mu: \mathcal A \times [K] \rightarrow [0, \infty)$ satisfies, for all $Y \in \mathcal A$ and all $k \in [K]$, 
$$
\mu(Y, k) \geq p^Y Q_k(Y) \Bigl( \prod_{Y' \sim Y} (1 + \sum_{k' \in [K]} \mu(Y', k')) \Bigr) \Bigl( 1 + \sum_{Y'' \bowtie_k Y} \mu(Y'', k)  \Bigr)
$$
Then, the expected number of resamplings is at most $\sum_{(Y,k)} \mu(Y,k)$.

\smallskip \noindent 
(c) Suppose that  $p^Y Q_k(Y) \leq P$ for all $Y \in \mathcal A,  k \in [K]$; and suppose that for all supported pairs $(Y,k)$ there are \emph{at most} $D$ supported pairs $(Y', k') \approx (Y,k)$. And suppose finally that $e P D \leq 1$. Then, the expected number of resamplings is at most $e \sum_{(Y,k)} p^Y Q_k(Y)$.
\end{theorem}
\begin{proof}
As in Theorem~\ref{resample-main-thm}, it suffices to prove case (a).  We first claim that $\tilde Q$ is a fractional hitting-set for $\tilde {\mathcal B}$. For, consider some $\tilde B \in \tilde {\mathcal B}$ of the form $\tilde B = B \cup \bigcup_{Y \in \mathcal A} [Y,k]$ for $B \in \mathcal B_k$. Then
$$
\sum_{Y \subseteq \tilde B} \tilde Q(Y) \geq \sum_{Y \subseteq B} \tilde Q(Y \cup [Y,k]) = \sum_{Y \subseteq B} Q_k(Y) \geq 1
$$

Next, in order to apply Theorem~\ref{resample-main-thm}(a), define the function $\tilde \mu: \tilde{\mathcal A} \rightarrow [0, \infty)$ by
$$
\tilde \mu(Z) = \begin{cases}
\mu(Z) & \text{if $Z$ is good} \\
0 & \text{otherwise}
\end{cases}
$$

We want to show that every $Z \in \tilde {\mathcal A}$ has
\begin{equation}
\label{ty1}
\tilde \mu(Z) \geq \tilde p^Z \tilde Q(Z) \sum_{ \mathcal T \in \ns(Z) } \prod_{Y' \in \mathcal T} \tilde \mu(Y')
\end{equation}

If $\mathcal T \in \ns(Z)$ is not good, then $\prod_{Y' \in \mathcal T} \tilde \mu(Y') = 0$. If $Z$ is not good, then (\ref{ty1}) holds easily, as $\tilde Q(Z) = 0$. So suppose $Z = (Y,k)$, and so (\ref{ty1}) reduces to 
$$
\mu(Y,k) \geq p^Y Q_k(Y) \sum_{ \mathcal T \in \gns(Z) } \prod_{(Y', k') \in \mathcal T} \mu(Y',k')
$$
which holds by hypothesis.

So the expected number of resamplings is $\sum_{Z \in \tilde{\mathcal A}} \tilde \mu(Z) = \sum_{Y \in \mathcal A} \sum_{k \in [K]} \mu(Y,k)$. Since this is finite, the PRA terminates with probability one.
\end{proof}

We note that Theorem~\ref{resample-main-thm-2} is equivalent to Theorem~\ref{resample-main-thm} if $K = 1$. (Each time a set $Y \in \mathcal A$ is referenced, simply replace it with $(Y,1)$.)

\subsection{Parametrizing by $\lambda$} 
We now describe a criterion for PRA or PRA-multi in terms of a vector $\lambda \in [0, \infty)^{\mathcal X}$, instead of the function $\mu: \mathcal A \times [K] \rightarrow \mathbf R$. This is a huge savings in terms of the number of parameters.  In fact, the vector $\lambda$ not only encodes $\mu$, but also the probability vector $p$. The vector $\lambda$ should be thought of as an ``inflated'' version of $p$; roughly speaking, $\lambda_{i,j}$ is the probability that $X_i = j$ at some point during the execution of the PRA.

Given the parameter $\lambda$, we define a few related quantities which will be needed to state our theorem.
\begin{definition}[\textbf{Values $\lambda_i, G_i, S$ that depend on a function $Q$ and a vector $\lambda$}] 
\label{defn:G}
Let $\mathcal B \subseteq \mathcal A$, let $\lambda \in [0, \infty)^{\mathcal X}$ and let $Q$ be a fractional hitting-set for $\mathcal B$.

Recalling the notation (\ref{eqn:power-notation}), define
$$
S(\mathcal B, Q, \lambda) = \max_{Q(Y) > 0} \sum_{Z: Z \bowtie Y} Q(Z) \lambda^{Z}
$$
where the definition of $\bowtie$ is with respect to $\mathcal B, Q$.

Also, for each $i \in [n]$, define 
$$
\lambda_i = \sum_j \lambda_{i,j}, \qquad G_i (Q, \lambda) = \sum_{Y \sim i} Q(Y) \lambda^Y
$$
\end{definition}

In the context of multiple bad-events $\mathcal B_1, \dots, \mathcal B_K$, we often write $S_k = S(\mathcal B_k, Q_k, \lambda)$ for simplicity. 

We next state our main theorem in terms of $\lambda$. The correspondence between the $\lambda$ notation and the parameters $p, \mu$ will be given by the following formulas:
$$
p_{ij} = \frac{\lambda_{ij}}{\lambda_i} \qquad \qquad \mu(Y,k) = \frac{\lambda^Y Q_k(Y)}{1 - S(\mathcal B_k, Q_k, \lambda)}
$$
\begin{theorem}[\textbf{Main Theorem in terms of $\lambda$}]
\label{resample-main-thm2}
Let $Q_1, \dots, Q_K$ be fractional hitting-sets for $\mathcal B_1, \dots, \mathcal B_K$ respectively and let $\lambda \in [0,\infty)^{\mathcal X}$. If $S(\mathcal B_k, Q_k, \lambda) < 1$ for all $k \in [K]$, and for all $i \in [n]$ we have
$$
\lambda_i \geq  1 + \sum_k \frac{G_i(Q_k, \lambda)}{1 - S(\mathcal B_k, Q_k, \lambda)}
$$
then PRA terminates with probability one; the expected number of resamplings is at most $\sum_i (\lambda_i-1)$.
\end{theorem}

Before we prove Theorem~\ref{resample-main-thm2}, we record some preliminary calculations which occur in a number of places:
\begin{proposition}
\label{resample-main-thm3}
Assuming that the conditions of Theorem~\ref{resample-main-thm2} are satisfied:
\begin{enumerate}
\item For any $i \in [n]$, we have $\sum_{Y \sim i} \sum_k \mu(Y,k)  \leq \lambda_i - 1$.
\item For any supported $(Y,k)$, we have $\sum_{Z \bowtie_k Y} \mu(Z, k) \leq \frac{S_k}{1 - S_k}$.
\end{enumerate}
\end{proposition}
\begin{proof}
For the first result:
\begin{align*}
\sum_{Y \sim i} \sum_{k} \mu(Y, k) &= \sum_{Y \sim i, k} \frac{\lambda^{Y} Q_k(Y)}{1 -  S_k} = \sum_{k} \frac{G_i(Q_{k}, \lambda)}{1 - S_k} \leq \lambda_i - 1
\end{align*}

For the second result:
\[
\sum_{Z \bowtie_k Y} \mu(Y, k) = \sum_{Z \bowtie_k Y} \frac{\lambda^{Z} Q_k(Z)}{1 -  S_{k}} \leq \frac{S_k }{1 -  S_k} \qedhere
\]
\end{proof}

\begin{proof}[Proof of Theorem~\ref{resample-main-thm2}]
We want to show that $\mu(Y,k)$ satisfies Theorem~\ref{resample-main-thm-2}(a) foir a given $Y \in \mathcal, k \in [K]$, that is,
\begin{equation}
\label{x0}
\mu(Y,k) \geq p^Y Q_k(Y) \sum_{\mathcal T \in \gns(Y,k)} \prod_{(Y',k') \in \mathcal T} \mu(Y',k')
\end{equation}

We now bound the RHS of (\ref{x0}). First, $\mathcal T$ may contain at most one pair $(Z,k)$ with $Z \bowtie_k Y$. By Proposition~\ref{resample-main-thm3}, the total contribution of such terms is at most $1 + \frac{S_k}{1 - S_k} = \frac{1}{1 - S_k}$. Next, for each $(i,j) \in Y$, the set $\mathcal T$ may contain at most one pair $(Y', k')$ with $i \sim Y'$. (If there two such pairs $(Y', k'), (Y'', k'')$, then $(Y',k) \sim (Y'', k'')$.) For any fixed $(i,j)$ in $Y'$, this contributes at most $1 + \sum_{Y' \sim i} \sum_{k'} \mu(Y', k')$; by Proposition~\ref{resample-main-thm3} this is at most $1 + (\lambda_i - 1) = \lambda_i$.

Putting these two estimates together,  we estimate the RHS of (\ref{x0}):
{\allowdisplaybreaks
\begin{align*}
&p^Y Q_k(Y) \sum_{\mathcal T \in \gns(Y,k)} \prod_{(Y',k') \in T} \mu(Y',k') \leq p^Y Q_k(Y) \times \frac{1}{1-S_k} \times  \prod_{(i,j) \in Y} \lambda_i  \\
&\qquad= \frac{Q_k(Y)  \prod_{(i,j) \in Y} p_{ij} \lambda_i}{1 - S_k} = \frac{Q_k(Y) \prod_{(i,j) \in Y} \frac{\lambda_{ij}}{\lambda_i} \lambda_i}{1 - S_k}  = \frac{Q_k(Y) \lambda^Y}{1 - S_k} = \mu(Y,k)
\end{align*}
}

Thus, Theorem~\ref{resample-main-thm-2}(a) holds. Note that since $\mathcal A$ contains only non-empty sets, every $Y \in \mathcal A$ has $Y \sim i$ for at least one value $i \in [n]$. Therefore, using Proposition~\ref{resample-main-thm3}, we compute
\[
\bE[\text{\# resamplings}] \leq \sum_{Y,k} \mu(Y, k) \leq \sum_{i \in [n]} \sum_{Y \sim i} \sum_k \mu(Y,k) \leq \sum_{i \in [n]} (\lambda_i-1)  \qedhere
\]
\end{proof}

When applying Theorem~\ref{resample-main-thm2}, we note that if $\bowtie_k$ is null, then $S_k = 0$. Also, we frequently use the crude bound:
\begin{equation}
\label{crude-sk-bound}
S(\mathcal B, Q, \lambda) \leq \sum_{Y \in \mathcal A} Q(Y) \lambda^Y
\end{equation}

The parametrization by $\lambda$ can be useful for the standard MT algorithm (as the MT algorithm is a special case of the PRA). In particular, it gives clean formulas for the ``MT distribution,'' which is the distribution on the variables at the termination of the MT algorithm. See Appendix~\ref{mt-dist-appendix} for further discussion and some examples.

\section{Bad-events defined by sums of random variables}
\label{sec:random-sum}
In this section, we explore a connection between symmetric polynomials and Chernoff tail bounds for sums of indicator variables of elements (i.e. terms of the form $[X_i = j]$). These will be central to assignment-packing problems. When the bad-events are defined in terms of such sums, then these symmetric polynomials correspond in a natural way with fractional hitting-sets.  

To state these results in their clearest form, we define the Chernoff upper-tail separation function:
\begin{definition}[\textbf{Chernoff upper-tail separation function $\text{Ch}(\mu,t)$}]
For real numbers $\mu, t > 0$, we define
$$
\text{Ch}(\mu, t) = e^{t - \mu} (\mu/t)^t
$$
\end{definition}

We recall three useful results of \cite{sss} on multivariate symmetric polynomials and Chernoff bounds.
\begin{proposition}
\label{sym-pol-bound1}
For any real numbers $a_1, \dots, a_{\ell} \in [0,1]$ and integer $k \leq a_1 + \dots + a_{\ell}$, we have
$$
\sum_{\substack{X \subseteq [\ell] \\  |X| = k}} \prod_{x \in X} a_x \geq \binom{ a_1 + \dots + a_{\ell}}{k}
$$
\end{proposition}

\begin{proposition}
\label{sym-pol-bound2}
For any real numbers $a_1, \dots, a_{\ell} \in [0,\infty)$ and integer $k \geq 0$, we have
$$
\sum_{\substack{X \subseteq [\ell] \\  |X| = k}} \prod_{x \in X} a_x \leq \frac{(a_1 + \dots + a_{\ell})^k}{k!}
$$
\end{proposition}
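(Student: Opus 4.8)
The plan is to compare the elementary symmetric polynomial $e_k(a_1,\dots,a_\ell) = \sum_{|X|=k}\prod_{x\in X} a_x$ against the multinomial expansion of $(a_1+\cdots+a_\ell)^k$. First I would write, for $k \geq 1$,
$$(a_1+\cdots+a_\ell)^k = \sum_{\substack{(c_1,\dots,c_\ell)\in\mathbb{Z}_{\geq 0}^\ell \\ c_1+\cdots+c_\ell = k}} \binom{k}{c_1,\dots,c_\ell}\, a_1^{c_1}\cdots a_\ell^{c_\ell}.$$
Since every $a_i \geq 0$, every summand on the right-hand side is nonnegative. (This is the one place the hypothesis $a_i \in [0,\infty)$ — as opposed to $a_i \in [0,1]$ in Proposition~\ref{sym-pol-bound1} — is used, and it is exactly what is needed.)

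Next I would isolate the multi-indices $(c_1,\dots,c_\ell)$ with every $c_i \in \{0,1\}$: these are in bijection with the $k$-element subsets $X \subseteq [\ell]$ via $X = \{\,i : c_i = 1\,\}$, and for each such multi-index the multinomial coefficient is $\binom{k}{1,\dots,1,0,\dots,0} = k!$. Discarding all the remaining (nonnegative) terms therefore gives
$$(a_1+\cdots+a_\ell)^k \;\geq\; \sum_{\substack{X\subseteq[\ell]\\|X|=k}} k!\prod_{x\in X} a_x \;=\; k!\, e_k(a_1,\dots,a_\ell),$$
and dividing by $k!$ yields the claimed bound. The edge cases are immediate: for $k=0$ both sides equal $1$, and for $k > \ell$ the left-hand side is the empty sum $0$ while the right-hand side is nonnegative.

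There is essentially no obstacle here; the only thing to be careful about is the bookkeeping — that a $\{0,1\}$-valued multi-index summing to $k$ corresponds to exactly one $k$-subset and carries multinomial coefficient exactly $k!$, and that nonnegativity of the $a_i$ is what licenses throwing away the other terms. If one preferred to avoid the multinomial theorem, an equivalent route is induction on $\ell$ using $e_k(a_1,\dots,a_\ell) = e_k(a_1,\dots,a_{\ell-1}) + a_\ell\, e_{k-1}(a_1,\dots,a_{\ell-1})$ together with the inequality $(s+a_\ell)^k \geq s^k + k\, s^{k-1} a_\ell$ for $s = a_1+\cdots+a_{\ell-1} \geq 0$, but the multinomial expansion is the most direct.
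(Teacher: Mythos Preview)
Your proof is correct. The paper does not give its own proof of this proposition; it simply quotes the result from \cite{sss}, so there is nothing to compare against beyond noting that your multinomial-expansion argument is the standard one and handles all cases (including $k=0$ and $k>\ell$) cleanly.
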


\begin{proposition}
\label{ch-d-thm}
For $0 \leq \mu \leq t$ and $d = \lceil t - \mu \rceil$, we have $\frac{\mu^d} {d! \binom{t}{d}}  \leq \text{Ch}(\mu, t).$
\end{proposition}

We can now state our main result which transforms a bad-event defined by sums of random variables into a fractional hitting set.
\begin{theorem}
\label{devthm}
Let $\lambda \in [0, \infty)^{\mathcal X}$ and $a \in [0,1]^{\mathcal X}$ be two vectors of real numbers,  indexed by the elements $x = (i,j)$ of $\mathcal X$. Define $\mu = \sum_x a_x \lambda_x$, and for each $i \in [n]$ define $\mu_i  = \sum_{j} a_{i,j} \lambda_{i,j}$. For a real number $t \geq 0$,  let $\mathcal B$ be the complex bad-event defined by
$$
\mathcal B \equiv \sum_{i,j} a_{i,j} [X_i = j] \geq t
$$

Let $d$ be any integer in the range $1 \leq d \leq t$. Then, recalling Definition~\ref{defn:G}, there is a fractional hitting-set $Q$ with the property 
\[  S(\mathcal B, Q, \lambda) \leq \frac{\mu^d} {d! \binom{t}{d}}; ~~
G_i(Q, \lambda) \leq \frac{d \mu_i}{\mu}  \cdot \frac{\mu^{d}}{d! \binom{t}{d}}. \]

\end{theorem}
\begin{proof}
We define $Q$ by 
$$
Q(Y) = \begin{cases}
\frac{\prod_{x \in Y} a_x}{\binom{t}{d}} & \text{if $|Y| = d$} \\
0 & \text{otherwise}
\end{cases}
$$

To show that this is a valid fractional hitting-set, consider some atomic bad-event $B$, where $\sum_{x \in B} a_x \geq t$. By Proposition~\ref{sym-pol-bound1}, we have
$$
\sum_{Y \subseteq B} Q(Y) = \frac{\sum_{Y \subseteq B, |Y| = d} a^Y}{\binom{t}{d}} \geq \frac{ \binom{\sum_{x \in B} a_x}{d} }{\binom{t}{d}} \geq \frac{ \binom{t}{d}}{\binom{t}{d}} = 1.
$$

We use Proposition~\ref{sym-pol-bound2} and (\ref{crude-sk-bound}) to compute $S(\mathcal B, Q, \lambda)$ as:
$$
S(\mathcal B, Q, \lambda) \leq \sum_{Y \in \mathcal A} \lambda^Y Q(Y) =\frac{\sum_{Y \in \mathcal A, |Y| = d} \lambda^Y a^Y}{\binom{t}{d}} \leq \frac{ (\sum_{x \in Y} a_x \lambda_x)^d }{d! \binom{t}{d}}  =\frac{ \mu^d  }{d! \binom{t}{d}}.
$$

Similarly, we use Proposition~\ref{sym-pol-bound2} to compute $G_i(Q, \lambda)$ as:
\begin{align*}
G_i(Q, \lambda) &= \sum_{Y \sim i} \lambda^Y Q(Y) = \frac{\sum_{j} a_{i,j} \lambda_{i,j}}{\binom{t}{d}} \sum_{\substack{Y \in \mathcal A \\ Y \not \sim i \\ |Y| = d-1 }} \lambda^Y  \leq \frac{\sum_{j} a_{i,j} \lambda_{i,j} \Bigl( \sum_{\substack{(u,j) \in \mathcal X \\ u \neq i}} a_{(u,j)} \lambda_{(u,j)} \Bigr)^{d-1}}{(d-1)! \binom{t}{d}} \\
&= \frac{\mu_i (\mu - \mu_i)^{d-1}}{(d-1)! \binom{t}{d}} \leq \frac{d \mu_i}{\mu} \frac{\mu^d}{d! \binom{t}{d}}. \qedhere
\end{align*}
\end{proof}

We note that in order to use the fractional hitting-set $Q$ of Theorem~\ref{devthm} in the PRA, we must be able to efficiently access $Q$.  We discuss how to do this in Appendix~\ref{access-q-sec}.

\begin{corollary}
\label{devthm2}
Let $\lambda \in [0, \infty)^{\mathcal X}$ and $a \in [0,1]^{\mathcal X}$ be two vectors of real numbers,  indexed by the elements $x = (i,j)$ of $\mathcal X$. Define $\mu = \sum_x a_x \lambda_x$, and for each $i \in [n]$ define $\mu_i  = \sum_{j} a_{i,j} \lambda_{i,j}$. For a real number $t \geq 1$,  let $\mathcal B$ be the complex bad-event defined by
$$
\mathcal B \equiv \sum_{i,j} a_{i,j} [X_i = j] \geq t
$$

Let $r$ be any real number satisfying $1 \leq r < t$ and $r \geq \mu$. Then there is a fractional hitting set $Q$ with the property
\[  S(\mathcal B, Q, \lambda) \leq \text{Ch}(r,t); ~~
G_i(Q, \lambda) \leq \mu_i ( \frac{t+1}{r} - 1 ) \text{Ch}(r,t); ~~ \]
\end{corollary}
\begin{proof}
  Apply Theorem~\ref{devthm} with parameter $t$ and $d = \lceil t - r \rceil$. Note that since $r < t$ we have $d \geq 1$, and since $r \geq 1$ we have $d \leq \lceil t - 1 \rceil \leq t$. With this value of $d$, Proposition~\ref{ch-d-thm} gives:
$$
    S(\mathcal B, Q, \lambda) \leq \frac{\mu^d} {d! \binom{t}{d}} \leq \frac{r^d}{d! \binom{t}{d}} \leq \text{Ch}(r,t)
    $$
    and similarly
    \begin{align*}
      G_i(Q, \lambda) &\leq \frac{d \mu_i}{\mu}  \cdot \frac{\mu^{d}}{d! \binom{t}{d}} \leq \frac{d \mu_i}{r}  \cdot \frac{r^{d}}{d! \binom{t}{d}} \leq \frac{(t-r+1) \mu_i}{r}  \cdot \frac{r^{d}}{d! \binom{t}{d}} \leq \mu( \frac{t+1}{r} - 1  ) \text{Ch}(r,t)      \qedhere
    \end{align*}    
  \end{proof}

\subsection{Column-sparse assignment-packing problems}
We now consider a family of constraint satisfaction problems we refer to as \emph{assignment-packing problems}. Here, we have $m$ linear packing constraints of the form  ``$\sum_{i,j} a_{k,i,j} [X_i = j] \leq b_k$'', where  $a_{k,x} \in [0,1], b_k \geq 0$. We also assume that this CSP is ``column-sparse'', in the sense that for any $x \in \mathcal X$ we have $\sum_k a_{k,x} \leq D$ for some parameter $D$. (As above, $x$ will often refer to some element $(i,j) \in \mathcal X$.) When does such an 
integer linear program have a feasible solution? 

One technique to solve this uses an LP relaxation, where each $(i,j) \in \mathcal X$ has a fractional variable $z_{i,j} \in [0,1]$ to represent  $[X_i = j]$. Each $i \in [n]$ has a linear constraint $ \sum_{j \in F_i} z_{i,j} = 1$. In addition the packing constraints are tightened to
$\sum_x a_{k,x} z_x \leq c_k$ for some $c_k \leq b_k$. We then convert the fractional solution $z$ into an integral solution via some form of randomized rounding. The central problem becomes how close $c_k$ can be to $b_k$, in order to ensure that a feasible integral solution can be converted into a feasible integral solution.

We describe here a randomized rounding algorithm based on a single application of the PRA. This involves a a parameter $\epsilon > 0$, which determines a multiplicative increase in $b_k$ as compared to $c_k$, in addition to a secondary discrepancy term.  Our main result will be the following:
\begin{theorem}
  \label{csp-thm}
  Let us suppose that the linear program
  $$
\sum_{j \in F_i} z_{i,j} = 1, \qquad \sum_x a_{k,x} z_x \leq c_k, \qquad z_x \in [0,1]
$$
is satisfiable, for some vector $c \in [1, \infty)^m$, and that we have a separation oracle for it (that is, given a variable assignment, we can either find a violated linear constraint, or determine that all constraints are satisfied.)
  
Let $D \geq 2$ be some parameter satisfying
$$
D \geq \max_{x} \sum_{k} a_{k,x}
$$
and let $\epsilon$ be a parameter satisfying $0 < \epsilon \leq 1/D$.

Then in expected time $\poly(n)$, we can find a solution to the integer program
$$
X_i \in F_i \qquad \sum_{i,j} a_{k,i,j} [X_i = j] \leq b_k
$$
where we define the vector $b$ by
$$
b_k = \begin{cases}
\frac{100 \ln(1/\epsilon)}{1 + \ln( \frac{\ln(1/\epsilon)}{c_k} )} & \text{for $c_k \leq \ln(1/\epsilon)$} \\
c_k(1+\epsilon) + 10 \sqrt{c_k \ln \bigl( D + \frac{1}{c_k \epsilon^2} \bigr)} & \text{for $c_k > \ln(1/\epsilon)$} 
\end{cases}
$$
\end{theorem}
\begin{proof}
  The full proof requires technical analysis of the Chernoff tail-separation function. In order to separate this routine functional analysis from our analysis of the PRA, we defer some calculations to Appendix~\ref{csp-proof-app}, in which we show that the vector $b$ satisfies the following conditions for all $k \in [m]$:
  \begin{description}
\item[(C1)] $(\frac{b_k+1}{c_k (1 + \epsilon)} - 1) \text{Ch}(c_k (1 + \epsilon), b_k) \leq \frac{\epsilon}{4 D}$
\item[(C2)] $\text{Ch}(c_k (1 + \epsilon), b_k) \leq 1/2$ 
  \end{description}
  
To begin, the separation oracle allows us to solve the LP in polynomial time; let $\hat z$ be the resulting fractional solution. We use the framework of Section~\ref{complex-bad-events}, in which each packing constraint will correspond to a complex bad-event (that is, $K = m$), and where $\lambda = (1 + \epsilon) \hat z$.

Let us analyze a constraint $k$, corresponding to the complex bad-event $\mathcal B_k \equiv \sum_{i,j} a_{k,i,j} [X_i = j] \geq b_k$. The fractional hitting-set $Q_k$ will be the one of Corollary~\ref{devthm2}, with $t = b_k$ and $r = (1+\epsilon) c_k$. Note that $r \geq 1$ since $c_k \geq 1$, and $r < t$ by property (C2). To show $r \geq \mu$ using the terminology of Corollary~\ref{devthm2}, we compute
$$
\mu = \sum_{i,j \in F_i} a_{k,i,j} \lambda_{i,j} = \sum_{i,j \in F_i} a_{k,i,j} (1 + \epsilon) \hat z_{i,j} \leq (1+\epsilon) c_k = r.
$$

We next calculate $G_i$ and $S_k$. By Corollary~\ref{devthm2} and using property (C2), 
$$
  S_k = S(\mathcal B_k, Q_k, \lambda) \leq \text{Ch}(r,t) = \text{Ch}(c_k(1+\epsilon), b_k) \leq 1/2  
$$

Furthermore for each $i \in [n]$, property (C1) gives
{\allowdisplaybreaks
\begin{align*}
G_i(Q_k, \lambda) &\leq  \mu_i (\frac{t +1 }{r} + 1) \text{Ch}(r,t) =\mu_i \Bigl( \frac{b_k+1}{c_k(1+\epsilon) } - 1 \Bigr) \text{Ch}(c_k(1+\epsilon), b_k) \leq \mu_i \frac{\epsilon}{4 D}
\end{align*}
}

We compute $\mu_i$ by:
$$
\mu_i = \sum_{j \in F_i} a_{k,i,j} \lambda_{k,i,j} \leq (1+\epsilon) \sum_j a_{k,i,j} \hat z_{i,j}
$$

To apply Theorem~\ref{resample-main-thm2}, we sum over all $j \in F_i$ obtaining
{\allowdisplaybreaks
\begin{align*}
\sum_{j \in F_i} \lambda_{i,j} - \sum_{k} \frac{G_i(Q_k, \lambda)}{1 - S_k} &\geq (1+\epsilon) - \sum_k \frac{ \epsilon/(4 D) \times (1+\epsilon) \sum_{j} a_{k,i,j} \hat z_{i,j}}{1/2} \\
&= (1+\epsilon) - \tfrac{1}{2}\sum_{j \in F_i} \hat z_{i,j} (1+\epsilon) \times (\epsilon/D) \times \sum_k a_{k,i,j}\\
&\geq (1+\epsilon) - \tfrac{1}{2} \sum_{j \in F_i} \hat z_{i,j} (1+\epsilon) \times (\epsilon/D) \times D  \qquad \text{definition of $D$} \\
&= (1+\epsilon) - \tfrac{1}{2}(1+\epsilon) \epsilon \geq 1 \qquad \text{as $\epsilon \in [0,1]$}
\end{align*}
}

Furthermore, for any $i \in [n]$ we have $\sum_{j \in F_i} \lambda_{i,j} \leq 1+\epsilon \leq 2$, so the expected number of iterations before the PRA terminates is at most $ \sum_{i,j} \lambda_{i,j} \leq O(n)$. Each step of the PRA can be efficiently implemented using the separation oracle, so this gives a polynomial-time algorithm (even though the number of constraints $m$ may be exponential in $n$). 
\end{proof}

We can simplify Theorem~\ref{csp-thm} when we have a uniform bound on the RHS values $c_k$.
\begin{proposition}
  \label{csp-thm3}
  Let us suppose that the linear program
  $$
\sum_{j \in F_i} z_{i,j} = 1, \qquad \sum_x a_{k,x} z_x \leq c_k, \qquad z_x \in [0,1]
$$
is satisfiable, for some vector $c \in [1, R]^m$, and that we have a separation oracle for it.
  
  Let $D \geq 2$ be some parameter satisfying $D \geq \max_{x} \sum_{k,x} a_{k,x}$.

  Then in expected time $\poly(n)$, we can find a solution to the integer program
$$
X_i \in F_i \qquad \sum_{i,j} a_{k,i,j} [X_i = j] \leq b_k
$$
where we define the vector $b$ by
$$
b_k = \begin{cases}
\frac{C \ln D}{1 + \ln( \frac{\ln D}{c_k})} & \text{if $R \leq \ln D$} \\
c_k + C \sqrt{R \ln D} & \text{if $R > \ln D$}
\end{cases}
$$
for some universal constant $C$.
\end{proposition}
\begin{proof}
  Let $d = \ln D$.   If $R \leq d$, then this follows immediately from Theorem~\ref{csp-thm} with $\epsilon = 1/D$. Otherwise, for $R > d$, we apply Theorem~\ref{csp-thm} with $\epsilon = D^{-10} \sqrt{d/R}$; note that $\epsilon \leq 1/D$ in this case.  Let $\delta = \ln(1/\epsilon)= 10 d + \tfrac{1}{2} \ln(R/d)$. If $c_k \leq \delta$, then 
$$
b_k =  \frac{100 \delta}{1 + \ln(\delta/R)} \leq \frac{O(d + \ln(R/d))}{1 + \ln(\delta/R)} \leq O(d + \ln(R/d)) \leq O(d + \log R) \leq O(\sqrt{R \log D})
$$

If $c_k \geq \delta$, we have
\begin{align*}
  b_k &= c_k (1 + \epsilon) + 10 \sqrt{R \ln(D + \frac{1}{c_k \epsilon^2})} = c_k + ( c_k D^{-10} \sqrt{d/R}) + 10 \sqrt{c_k \ln\bigl( D + \frac{D^{20} R}{c_k d} \bigr) }
\end{align*}

As $c_k \leq R$, the term $c_k D^{-10} \sqrt{d/R}$ is at most $R D^{-10} \sqrt{ \frac{\log  D}{R}} \leq O( \sqrt{R})$. Likewise, simple analysis shows that $c_k \ln \Bigl( D + \frac{D^{20} R}{c_k d} \Bigr) \leq O(R \log  D)$, 
giving the claimed result.
  \end{proof}

We note that there is a fundamental problem in how the standard LLL deals with fractional entries in the constraint matrix $A$. The issue is that variable $X_i$ affects constraint $k$ if $a_{k,i,j} > 0$, and it is possible that every variable affects every constraint. In the LLL setting, one cannot quantify ``how much'' $X_i$ affects a constraint. As shown in \cite{harvey}, it is possible to sidestep this issue by quantization of $A$: if $a_{k,i,j}$ is close to zero, then it gets quantized to zero and thus $X_i$ does not affect the given constraint at all. However, this is cumbersome and unnatural.

The LLL has other problems in dealing with CSP's, even when the matrix etntries are in the range $\{0, 1 \}$. In Appendix~\ref{compare-mt-sec}, we compare the PRA to the MT algorithm, with  a rather extreme demonstration of how the PRA is able to make steady progress to the solution, whereas the MT algorithm on its own is completely unable to do so. Specifically, we construct a family of linear threshold constraint-satisfaction problems, in which every entry of the matrix $A$ is in the range $\{0, 1 \}$, but where still \emph{every} variable effects \emph{every} constraint. In this case, there is no ``locality'' in the sense of the LLL. The MT algorithm completely throws away its partial solution at every iteration, and starts from scratch. Not surprisingly, the MT algorithm cannot guarantee \emph{any} scale-free approximation factors (independent of the number of constraints or variables) for this type of problem.  By contrast, the PRA yields a very good approximation in expected polynomial time. 

We also note that that there is a related class of integer programming problems referred as \emph{column-sparse covering integer problems.} A variant of the PRA can be used to solve these systems; see \cite{chen2016partial} for more details.

\section{Applications of column-sparse assignment problems}
\label{sec:column-sparse}
In this section, we discuss two straightforward applications of Theorem~\ref{csp-thm}, to a multi-dimensional scheduling approximation algorithm and to a problem in discrepancy. The results we obtain have not been stated before explicitly; however, it is possible to derive them using a combination of previous rounding algorithms including the LLL, polyhedral rounding such as \cite{klrtvv}, and the algorithm of \cite{llrs}. We include them here to better explain the PRA, and to demonstrate how it gives a simpler and more unified approach to such discrepancy bounds.

\subsection{Multi-dimensional scheduling}
Scheduling on unrelated parallel machines is a classic problem in operations research \cite{LST}. In this setting, we have $n$ jobs and $m$ machines, and each job $i$ needs to be scheduled on some machine.\footnote{We interchange the standard use of the indices $i$ and $j$ here in order to conform to the rest of our notation.} If job $i$ is scheduled on machine $j$, then $j$ incurs a load of $p_{i,j}$. The goal is to minimize the \emph{makespan}, the maximum total load on any machine. The standard way to approach this is to introduce an auxiliary parameter $T$, and ask if we can schedule with makespan $T$ \cite{LST,singh:thesis}. This leads to the integer program formulation:
\begin{eqnarray*}
\forall i \in [n],  ~\sum_{j \in F_i} z_{i,j} & = & 1; \\
\forall j \in [m], ~\sum_i p_{i,j} z_{i,j} & \leq & T; \\
\forall (i,j), ~p_{i,j} > T & \implies & z_{i,j} = 0;
\end{eqnarray*}

Azar \& Epstein \cite{azar-epstein:lp} considered a generalization where  there are $d$ dimensions to the load (say runtime, energy, heat consumption, etc.) when job $i$ gets assigned to machine $j$. In dimension $\ell$, this assignment leads to a load of $p_{i,j,\ell}$ on $j$. We ask here: \emph{given a vector $(T_1, T_2, \ldots, T_d)$, is there an assignment that has a makespan of at most $T_{\ell}$ in each dimension $\ell$?} Azar \& Epstein described a $(d+1)$-approximation algorithm for this. We show how the PRA can improve this to $O(\frac{\log d}{\log \log d})$:
\begin{theorem}
Given a feasible makespan vector $(T_1, \dots, T_d)$, there is a randomized polynomial-time algorithm to find a schedule with makespan vector $(T'_1, \dots, T'_d)$ where
$$
T'_i \leq O \Bigl( T_i \times \frac{\log d}{\log \log d} \Bigr)
$$
\end{theorem}
\begin{proof}
This is an easy application of Proposition~\ref{csp-thm3}. First, we set $z_{i,j} := 0$ if there exists some $\ell$ with $p_{i,j,\ell} > T_{\ell}$. After this filtering, we solve the LP relaxation with the  constraints:
$$
\forall j \in [m], \forall \ell \in [d], ~\sum_i \frac{p_{i,j, \ell}}{T_\ell} z_{i,j} \leq 1;
$$

This LP has its RHS values (i.e. entries of $c_k$) equal to one, and our filtering ensures that the coefficient matrix has entries $a_{i,(j,\ell)} = \frac{p_{i,j,\ell}}{T_\ell} \in [0,1]$. Each element $(i,j) \in \mathcal X$ has $d$ constraints, so the maximum column sum is $D = d$.  By Proposition~\ref{csp-thm3}, we get $b_k = O \bigl( \frac{\log D}{\log \log D} \bigr)$.
\end{proof}

\subsection{Discrepancy}
\label{discrep-app}
As another application, we consider a discrepancy problem introduced in \cite{harvey}, and analyzed there via the LLL. 
\begin{theorem}
Let $Y$ be an $m \times n$ matrix whose entries are real numbers in the range $[-1,1]$, which satisfies the following bounds on the $\ell_1$ norms of the rows and columns:
$$
\forall i \sum_k |Y_{k,i}| \leq D \qquad \forall k \sum_i |Y_{k,i}| \leq R,
$$
for parameters $R \geq 1, D \geq 2$. Then there is a randomized polynomial-time algorithm to find a vector $v \in \{-1, +1 \}^n$  such that, for all $k \in [m]$
$$
| Y_k \cdot v | \leq O( \sqrt{R \log D} )
$$
\end{theorem}
\begin{proof}
Let $d = \ln D$ and for each $k \in [m]$ let $y_k = \sum_i |Y_{k,i}| \leq R$.  If $R \leq d$, then the stated result holds trivially, as $|Y_k \cdot v | \leq y_k \leq R \leq \sqrt{R d}$ for any such vector $v$. So, let us assume $R > d$.

For each $i \in [n]$ we introduce a variable $X_i$ which takes two possible values which we name $+1, -1$. For each $k \in [m]$ we introduce a packing constraint
$$
\sum_{i: Y_{k,i} > 0} Y_{k,i} z_{i,+1} + \sum_{i: Y_{k,i} < 0} (-Y_{k,i}) z_{i,-1} \leq c_k
$$
where $c_k = y_k/2$. This LP has a fractional solution defined by $z_{i,+1} = z_{i,-1} = 1/2$ for all $i$, and has maximum $\ell_1$-column-norm of $D$. We now apply Theorem~\ref{csp-thm3} with parameter $R$ to obtain an integral solution $v_1, \dots, v_n \in \{-1, 1 \}^n$, such that 
$$
\sum_{i: Y_{k,i} > 0} Y_{k,i} [v_i = +1] + \sum_{i: Y_{k,i} < 0} (-Y_{k,i}) [v_i = -1] \leq b_k;
$$
here, we observe that our assumption $R > d$ ensures that $b_k \leq c_k + O(\sqrt{R d}) = y_k/2 + O(\sqrt{R d})$.

This vector $v$ achieves the desired result, as for each constraint $k \in [m]$, we have 
{\allowdisplaybreaks
\begin{align*}
Y_k \cdot v &= \sum_{i: Y_{k,i}>0} (-Y_{k,i} + 2 Y_{k,i} [v_i = +1])  + \sum_{i: Y_{k,i}<0} (Y_{k,i} - 2 Y_{k,i} [v_i = -1]) \\
&= -y_k + 2 \bigl( \sum_{i: Y_{k,i}>0} Y_{k,i} [v_i = +1]  + \sum_{i: Y_{k,i}<0} - Y_{k,i} [v_i = -1] \bigr) \\
&\leq -y_k + 2  \bigl ( y_k/2 + O(\sqrt{R d}) \bigr) \leq O(\sqrt{R d}) \qedhere
\end{align*}
}
\end{proof}

By contrast, \cite{harvey} shows the weaker bound $|Y_k \cdot v| \leq O( \sqrt{R \log(RD)})$.

\section{Transversals with omitted subgraphs}
\label{sec:transversals}
Consider a graph $G = (V,E)$ with a partition of its vertices into sets $V = V_1 \sqcup V_2 \sqcup \dots \sqcup V_\ell$, each of size $b$. We refer to these sets as \emph{blocks} or \emph{classes}. We wish to select exactly one vertex from each block; such a set of vertices $A \subseteq V$ is known as a \emph{transversal}. There is a large literature on selecting transversals such that the graph induced on $A$ omits certain subgraphs. (This problem was introduced in a slightly different form by \cite{bollobas-erdos-szemeredi:isr}; more recently it has been analyzed in \cite{szabo-tardos:extremal,haxell-szabo-tardos:partitioning,yuster:transversals,jin:transversals,haxell-szabo:odd2006}.) For example, when $A$ is an independent set of $G$ (omits the 2-clique $K_2$), this is referred to as an \emph{independent transversal}. 

Alon gives a short LLL-based proof that a sufficient condition for such an independent transversal to exist is $b \geq 2e \Delta$ \cite{alon:lin-arb}, where $\Delta$ is the maximum degree of $G$. The cluster-expansion version of the LLL \cite{bissacot} easily shows that $b \geq 4 \Delta$ suffices. Haxell shows non-constructively  that a sufficient condition is $b \geq 2 \Delta$ \cite{haxell:struct-indep-set}; this condition is existentially optimal, in the sense that $b \geq 2 \Delta - 1$ is not always admissible \cite{jin:transversals,yuster:transversals,szabo-tardos:extremal}. A similar criterion $b \geq \Delta + \lfloor \Delta/r \rfloor$ is given in  \cite{haxell-szabo-tardos:partitioning} for the existence of a transversal which induces no connected component of size greater than $r$. Finally, \cite{locally-sparse} gives a criterion of $b \geq \Delta$ for the existence of a transversal omitting $K_3$; this is the optimal constant but the result is non-constructive.

\subsection{Avoiding large cliques}
Let us consider the problem of finding an independent transversal omitting $K_s$. We will be interested in the case when both $s$ and $\Delta$ are large. More specifically, for any value of $s$ we seek a bound of the form $b \geq \gamma_s \Delta$ (where $b, \Delta$ may go to infinity). We then seek to understand the behavior of the value of $\gamma_s$ as $s \rightarrow \infty$. 

We must have $\gamma_s \geq 1/(s-1)$. To see this, note that for any value of $b \geq 1$, we may take a graph consisting of $s$ blocks each containing $b$ vertices, where every vertex is connected to all vertices outside its block. This graph has $\Delta = b (s-1)$, and clearly any transversal contains a copy of $K_s$. An argument of \cite{szabo-tardos:extremal} shows the slightly stronger lower bound $\gamma_s \geq \frac{s}{(s-1)^2}$; intriguingly, \cite{szabo-tardos:extremal} also conjecture this to be exactly tight. On the other hand, a non-constructive argument of \cite{locally-sparse} shows that $\gamma_s \leq 2/(s-1)$; this is the best general upper-bound on $\gamma_s$ previously known. 

The following result shows that the lower-bound of \cite{szabo-tardos:extremal} gives the correct \emph{asymptotic} rate of growth, up to lower-order terms. 
\begin{theorem}
\label{thm:avoidclique0}
For $s \geq 1$,  we have $\gamma_s \leq \frac{1}{s} \Bigl( 1 + O( \frac{1}{\sqrt{s}} ) \Bigr)$.
\end{theorem}
\begin{proof}
We define a variable $X_i$ for each block $i$, wherein $X_i$ is the choice of which vertex in block $i$ goes into the transversal, and we use the probability distribution setting $p_{iv} = 1/b$ for each block $i$ and vertex $v$ in that block. 

We have a separate bad-event for each $s$-clique. We define $Q$ by setting $Q(Y) = 1/\binom{s}{r}$ whenever $Y$ corresponds to an $r$-clique in the graph, where $r < s$ is some parameter to be chosen. This satisfies the definition of a fractional hitting set, since an $s$-clique contains exactly $\binom{s}{r}$ $r$-cliques.

We will apply Theorem~\ref{resample-main-thm}(c) to show that the PRA terminates with a good configuration as long as $b \geq (\Delta/s) (1 + \frac{c}{\sqrt{s}})$ and $c$ is some sufficiently large universal constant. 

For any $r$-clique $Y$, we have $p^Y Q(Y) = \frac{(1/b)^r}{\binom{s}{r}}$. We need to count how many other $r$-cliques $Z$ have $Z \approx Y$. First, to enumerate all $Z \sim Y$, we may select any vertex $v \in Y$, select another vertex $u$ from the block of $v$, and any choices of $r-1$ neighbors of $u$. Thus, there are at most $b r \binom{\Delta}{r-1}$ choices of $Z$ with $Z \sim Y$. To count the number of $r$-cliques $Z$ with $Z \bowtie Y$, note that if we fix some vertex $v \in Y$, then every vertex in $Z$ is a neighbor of $v$ (since $Y, Z$ are subsets of a common $s$-clique). So there are are most $\binom{\Delta}{r}$ choices for $Z$.

So, we apply Theorem~\ref{resample-main-thm}(c) with $P = \frac{1}{b^r \binom{s}{r}}$ and $D = b r \binom{\Delta}{r-1} + \binom{\Delta}{r}$. We calculate
$$
e P D = \frac{e (b r \binom{\Delta}{r-1} + \binom{\Delta}{r})}{b^r \binom{s}{r}} \leq \frac{e ( r^2 (b/\Delta) + 1) }{(b/\Delta)^r \binom{s}{r} r!}
$$

Simple calculus shows that $e P D \leq 1$ is satisfied for $s$ sufficiently large with $r = \lceil \sqrt{s} \rceil$ and $b/\Delta = s^{-1} + 3 s^{-3/2}$. This implies that for $s$ sufficiently large and $b \geq \Delta (s^{-1} + s^{-3/2})$, the PRA will find a transversal omitting $K_s$. In particular, $\gamma_s \leq s^{-1} + 3 s^{-3/2}$ for $s$ sufficiently large.
\end{proof}

By way of comparison, let us consider bounding $\gamma_s$ via the standard LLL. Each $s$-clique $H$ of the graph corresponds to a bad-event. Each such bad event has probability $(1/b)^s$, and affects at most $s b \Delta^{s-1}/(s-1)!$ other $s$-cliques. The symmetric LLL criterion is therefore satisfied when
$$
e \times (1/b)^s \times s b \Delta^{s-1} / (s-1)! \leq 1
$$
which leads to the condition $b/\Delta \geq \Bigl( \frac{e s}{(s-1)!} \Bigr)^{\frac{1}{s-1}} = e/s + o(1/s)$, which is worse by a constant factor.

Although Theorem~\ref{thm:avoidclique0} shows that the PRA terminates with a configuration avoiding $s$-cliques, this does not lead to an efficient algorithm. The reason is that in order to implement the PRA, we must detect if there is some true bad-event; this would require finding a clique in the graph, which is intractable.  In order to obtain a fully constructive algorithm, we must enforce a \emph{stronger} (but easy-to-check) condition on the graph: not only do we avoid copies of $K_s$, but we in fact avoid $(s-1)$-stars. This leads to a bound on $b$ with slightly weaker second-order terms.

\begin{theorem}
\label{thm:avoidclique}
Let $G$ be a graph of maximum degree $\Delta$ whose vertex set is partitioned into blocks of size $b$. If
$$
b \geq \frac{\Delta}{s} \Bigl( 1 + c \sqrt{\frac{\log s}{s}} \Bigr),
$$
for some constant $c$, then $G$ has a transversal which does not contain any $s$-stars, which can be found in randomized polynomial time.
\end{theorem}
\begin{proof}
We use a fractional hitting set which assigns weight $1/\binom{s}{r}$ to each $r$-star, and zero to every other subgraph, for some integer $r \leq s$. We will use Theorem~\ref{resample-main-thm2} with $K = 1$, assigning the constant vector 
$$
\vec \lambda = \alpha = \frac{s-r}{\Delta (1+r)^{2/r}},
$$
As any two $r$-stars $H, H'$ which both correspond to the same $s$-star, will overlap in their central vertex, we see that $\bowtie$ is null so $S_1 = 0$.

For any vertex $v$, there are at most $\binom{\Delta}{r}$ $r$-stars in which $v$ is the central vertex and at most $\Delta \binom{\Delta-1}{r-1}$ $r$-stars where it is a peripheral vertex. So the condition of Theorem~\ref{resample-main-thm2} becomes
\begin{equation}
\label{rt1}
b \alpha - b \frac{\Bigl( \mybinom{\Delta}{r} +\Delta \mybinom{\Delta-1}{r-1} \Bigr)}{\binom{s}{r}} \alpha^{r+1} \geq 1
\end{equation}

We estimate this as:
$$
b \alpha - b \frac{\Bigl(\mybinom{\Delta}{r} +\Delta \mybinom{\Delta-1}{r-1} \Bigr)}{\binom{s}{r}} \alpha^{r+1} \geq b \alpha - b \Bigl( (r+1) \Delta^r \frac{ (s-r)! }{s!} \Bigr)\alpha^{r+1} \geq b \alpha - \frac{b (r+1) \Delta^r \alpha^{r+1}}{(s-r)^r}
$$

Thus, substituting the value of $\alpha$, a sufficient condition to satisfy (\ref{rt1}) is given by
$$ 
b \geq (\Delta/s) \times \frac{ (r+1)^{1+2/r}}{r(1-r/s)}
$$

At this point we set $r = \lceil \sqrt{s \ln s} \rceil$. As $(r+1)^{1+2/r}/r$ is a decreasing function of $r$, we have:
$$
\frac{ (r+1)^{1+2/r}}{r (1-r/s) } \leq \frac{ (1 + \sqrt{s \ln s})^{1 + \frac{2}{\sqrt{s \ln s}}}}{\sqrt{s \ln s} (1 - \frac{1 + \sqrt{s \ln s}}{s}) } \leq 1 + O(\sqrt{\frac{\log s}{s}})
$$

This implies that the PRA converges under the criterion
$$
b \geq (\Delta/s)  \Bigl( 1 + c \sqrt{\frac{\log s}{s}} \Bigr)
$$
for some sufficiently large constant $c > 0$.

To implement a step of the PRA, one must search the graph for any $s$-star in the current candidate transversal; this can be done easily in polynomial time.
\end{proof}

Theorem~\ref{thm:avoidclique} improves on \cite{locally-sparse} in three distinct ways: it gives a better asymptotic estimate for $\gamma_s$; it is fully constructive; it finds a transversal omitting not only $s$-cliques but also $s$-stars.

\subsection{Bounds in terms of average block degree}
The \emph{maximum degree} $\Delta$ is a relatively crude statistic. Let us define $d$ to be the maximum \emph{average} degree of any class $V_i$ and get bounds in $d$ instead. (Formally, we take the average of the degree (in $G$) of all vertices in $V_i$, and then maximize this over all $i$). For some graphs $H$, the PRA gives a simple method of finding independent transversals avoiding $H$, where $b$ is a simple function of $d$.

We say that a graph $H$ is \emph{intersecting} if for all edges $f, f'$ of $H$ we have $f \cap f' \neq \emptyset$.  Note that an intersecting graph $H$ with $r$ edges is an $r$-star, unless $r = 3$ in which case $H$ can also be a triangle.
\begin{theorem}
\label{ind-traversal-thm}
Let $H$ be an intersecting graph with $r$ edges. Let $G$ be a graph whose vertex set is partitioned into blocks of size at least $b$, and let $d$ be the maximum average degree of any block. If $b \geq 4 d/r$, then $G$ has a transversal avoiding $H$ which can be found in randomized polynomial time.
\end{theorem}
\begin{proof}
For each block $i$, we define a variable $X_i$ which is the choice of which vertex in that block to place into the transversal. We give $X_i$ has the uniform distribution over vertices in that block.

We will apply Theorem~\ref{resample-main-thm2} with $K = 1$. Each copy of $H$ in $G$ corresponds to an atomic bad-event of $\mathcal B_1$. We define a fractional hitting-set $Q_1$ by setting $Q_1(\{u, v \}) = 1/r$ for each edge $f = (u, v) \in E$, and $Q_1$ is zero everywhere else.

Since the atomic bad events all involve exactly $r$ edges, $Q_1$ satisfies the conditions of fractional hitting-set. Furthermore, any pair of edges $f_1, f_2$ which are both part of a copy of $H$ overlap in at least one vertex, so $\bowtie$ is null and $S_1 = 0$. 

The vector $\lambda$ used for Theorem~\ref{resample-main-thm2} has all its entries equal to a scalar value $\alpha \geq 0$. Let $d_v$ denote the degree of vertex $v$. Then, in order to prove $\lambda_i \geq 1 + \sum_k G_i(Q_k, \lambda)$, we need to show
$$
b \alpha - \sum_{v \in V_i} d_v \alpha^2/r \geq 1,
$$

By definition of $d$, we have $\sum_{v \in V_i} d_v \leq b d$, so we need to show
$$
b \alpha - bd \alpha^2/r \geq 1
$$

When $b \geq 4 d / r$, this has a solution $\alpha$ with $0 < \alpha \leq \frac{r}{2 d}$. This shows that the PRA converges in time $O(n \alpha) \leq O(n r / 2 d)$. Note that $H$ must either be a triangle, or an $r$-star; either of these can be found in polynomial time in $G$.
\end{proof}

As shown in \cite{jin:transversals,yuster:transversals,szabo-tardos:extremal}, when $H = K_2$ this cannot be improved to $b \geq 2 d -1$. As shown in \cite{locally-sparse}, when $H$ is a triangle this cannot be improved to $b \geq (1-\epsilon) d$ for any constant $\epsilon > 0$.  

\section{Packet routing}
\label{sec:routing}
Consider a graph $G$ with $N$ packets, each of which has a specified simple path of length at most $D$ to reach its endpoint vertex (we refer to $D$ as the \emph{dilation}). In any timestep, a packet may wait at its current position, or move along the next edge on its path. Our goal is to find a schedule of smallest makespan in which, in any given timestep, an edge carries at most a single packet.

We begin by reviewing the basic strategy of \cite{lmr}, and its improvements by \cite{schei:thesis} and \cite{peis-wiese}. We recommend consulting \cite{schei:thesis}, which has a very detailed explanation of this problem as well as many more variants than we cover here. We note that \cite{peis-wiese} studied a more general version of the packet-routing problem, so their choice of parameters was not (and could not be) optimized.

We define the \emph{congestion} $C$ to be the maximum, over all edges, of the number of packets scheduled to traverse that edge. It is clear that $D$ and $C$ are both lower bounds for the makespan, and \cite{lmr} has shown that in fact a schedule of makespan $O(C+D)$ is possible. The work of \cite{schei:thesis} provided an explicit constant bound of $39 (C+D)$, as well as describing an algorithm to find such a schedule. This was improved to $23.4 (C+D)$ in \cite{peis-wiese} as will be described below.

While the final schedule only allows one packet to cross an edge at a time, we will relax this constraint during our construction. We consider ``infeasible'' schedules, in which arbitrarily many packets pass through each edge at each timestep. We define an \emph{interval} to be a consecutive set of times in our schedule, and the \emph{congestion} of an edge in a given interval to be the number of packets crossing that edge. If we are referring to intervals of length $i$, then we define a \emph{frame} to be an interval which starts at an integer multiple of $i$.

From our original graph, one can easily form an (infeasible) schedule with delay $D$ and overall congestion $C$. Initially, this congestion may ``bunch up'' in time, that is, certain edges may have very high congestion in some timesteps and very low congestion in others. Our construction  will ``even out'' the schedule, bounding the congestion on successively smaller intervals.

Ideally, this process would eventually finish with each individual timestep (i.e. interval of length 1) having congestion roughly $C/D$. In this case, the infeasible schedule could be turned into a feasible schedule, by simply expanding each timestep into $C/D$ separate timesteps.

Peis \& Wiese \cite{peis-wiese} improved the bound on the makespan to $23 (C+D)$ by controlling the congestion on intervals of length $2$ (instead of length $1$). Given our infeasible schedule, we can view each interval of length 2 as defining a new subproblem with dilation $2$ and congestion $C$. We quote their result:
\begin{proposition}[\cite{peis-wiese}]
\label{peisprop}
If $D = 2$, there is a schedule of makespan $C + 1$ that can be found in polynomial time.
\end{proposition}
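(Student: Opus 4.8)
This statement is quoted from \cite{peis-wiese}; the plan is to reconstruct a proof of it. First I would dispose of the easy cases: packets whose prescribed path has length $0$ do nothing, and a packet whose path is a single edge $e$ can be slotted in after everything else, since it needs only one of the $C+1$ available time-steps to be free on $e$ and $e$ carries at most $C$ packets in total. So assume every packet $p$ has a path $u_p\to v_p\to w_p$, traversing $e_p=\{u_p,v_p\}$ and then $f_p=\{v_p,w_p\}$ (with $e_p\neq f_p$). A makespan-$(C+1)$ schedule now amounts to choosing, for each $p$, a departure time $d_p\in\{1,\dots,C\}$ --- the packet then crosses $e_p$ in step $d_p$ and $f_p$ in step $d_p+1\le C+1$ --- subject to the congestion-$1$ requirement that for every edge $g$ of $G$ and every step $\tau$ at most one packet crosses $g$ in step $\tau$; equivalently, that the numbers $\{d_p:e_p=g\}$ together with the numbers $\{d_q+1:f_q=g\}$ are pairwise distinct.

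The conflicts among packets playing the \emph{same} role at $g$ (two first-users, or two second-users) are easy to control: form the bipartite multigraph $H$ with one vertex for each edge of $G$ in its ``first-edge'' role and one in its ``second-edge'' role, and an $H$-edge from $e_p$ to $f_p$ for each packet $p$. Since the congestion is $C$, every vertex of $H$ has degree at most $C$, so by K\"onig's edge-coloring theorem (which is constructive in polynomial time) $H$ has a proper edge-coloring with colors $\{1,\dots,C\}$; taking $d_p$ to be the color of $p$ already ensures that no two packets sharing a first edge, and no two sharing a second edge, collide. The real content --- and the place where the hypothesis $D=2$ is used --- is handling the \emph{cross-role} conflicts (a packet finishing its trip on $g$ in step $\tau$ versus another packet starting its trip on $g$ in that same step) while still fitting inside only $C+1$ steps rather than the $2C$ that a naive block schedule would use.

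To obtain the tight bound I would argue integrality of the natural linear program: variables $x_{p,\tau}\ge 0$ for $p$ a packet and $\tau\in\{1,\dots,C\}$, with $\sum_\tau x_{p,\tau}=1$ and, for each edge $g$ and step $\sigma$, $\sum_{e_p=g}x_{p,\sigma}+\sum_{f_q=g}x_{q,\sigma-1}\le 1$. A feasible fractional point is $x_{p,\tau}=1/C$ for all $\tau$, since each edge--step slot then receives load at most $C\cdot\tfrac1C=1$. The crucial structural observation is that, because each packet touches exactly two edges, every column of the packing part of this LP has exactly two ones --- one in a ``first-edge slot'' $(e_p,\tau)$ and one in a ``second-edge slot'' $(f_p,\tau+1)$, whose second coordinates differ by $1$ --- so the packing constraints form the incidence matrix of a bipartite graph (two-color the slots by the parity of their second coordinate) and are therefore totally unimodular. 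One then has to check that adjoining the assignment constraints $\sum_\tau x_{p,\tau}=1$ keeps the polytope integral, so that an integral optimum --- that is, a choice of all $d_p\in\{1,\dots,C\}$, hence makespan $\le C+1$ --- exists and is computable in polynomial time by linear programming (or, more combinatorially, by repairing an initial edge-coloring of $H$ along alternating/Eulerian paths). I expect the main obstacle to be exactly this last verification: confirming that the length-$2$ structure really does force integrality and yield the sharp constant $C+1$ rather than the routine factor-$2$ loss.
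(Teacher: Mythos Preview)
The paper does not actually prove this proposition: it is quoted from \cite{peis-wiese} and used as a black box, so there is no in-paper argument to compare yours against.

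Your outline is reasonable and you have located the crux correctly. The reduction to choosing $d_p\in\{1,\dots,C\}$ for each length-$2$ packet, the handling of shorter packets, and the K\"onig edge-coloring that eliminates same-role conflicts are all fine. Your parity observation is also correct: each column of the packing block has its two $1$'s in rows of opposite time-parity, so that block alone is the incidence matrix of a bipartite graph and hence totally unimodular. The genuine gap is exactly the one you flag: once you adjoin the assignment rows $\sum_\tau x_{p,\tau}=1$, every column acquires a \emph{third} $1$, and none of the standard TU criteria apply directly to a $\{0,1\}$-matrix with three ones per column. So as written this is a plan with an honestly identified but unresolved hole; the feasibility of $x\equiv 1/C$ shows the LP is feasible, but not that it has an integral optimum. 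Your parenthetical alternative---start from a K\"onig coloring and repair cross-role conflicts along alternating structures---is the more promising combinatorial route if you want to reconstruct the result yourself, or you can simply cite \cite{peis-wiese} as the present paper does.
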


\subsection{Using the LLL to find a schedule}
As a starting point, our construction is based on \cite{schei:thesis} with some optimized parameters. We add random delays to each packet, and then allow the packet to move through each of its edges in turn without hesitation. The LLL is used to ensure that the congestion does not get too large on any interval.

\begin{lemma}
\label{lmrlem1}
Suppose there is a schedule $S$ of length $L$ such that every interval of length $i$ has congestion at most $C$. For positive integers $m, C', i'$ with $i' \leq i/2$, suppose that
$$
e \times \Pr(\text{Binomial} (C, \frac{i'}{i - i'}) > C') \times (C m i^2 + 1) < 1
$$

Then there is a schedule $S'$ of length $L' = L(1 + 1/m) + i$, in which every interval of length $i'$ has congestion at most $C'$. Furthermore, $S'$ can be constructed in expected polynomial time.
\end{lemma}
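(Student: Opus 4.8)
The plan is to use the classical random-delays refinement of \cite{lmr}, in the streamlined form of \cite{schei:thesis}, with the probabilistic core supplied by the symmetric Lov\'asz Local Lemma, and then to make the construction algorithmic by running the Moser--Tardos algorithm (a special case of our framework, obtained with the trivial hitting-set) in place of the bare LLL.

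First I would pin down the construction of $S'$. Partition the time axis of $S$ into consecutive \emph{frames} $F_0, F_1, \dots$ of length $i$ (padding the last frame if $i \nmid L$), and allot to each frame a slot of length $i(1+1/m)$ in the new timeline, so that $S'$ has length $L(1+1/m)+i$, the extra ``$+i$'' absorbing the boundary/padding. For each packet $p$ and each frame $F_t$ in which $p$ makes at least one move, draw an independent offset $\delta_{p,t}$ uniformly from $\{0,1,\dots,i-i'-1\}$; in $S'$, packet $p$ executes inside $F_t$'s slot exactly the moves it made inside $F_t$ in $S$, shifted later by $\delta_{p,t}$ and otherwise without hesitation. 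Using a bound on how many moves a packet can make within a single length-$i$ frame, one checks that the $i/m$ of slack per slot accommodates the shift, so $S'$ is a well-defined schedule of the stated length, and that each length-$i'$ interval of $S'$ draws its crossings of any fixed edge from a \emph{single} original frame $F_t$.

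Next I would set up the bad events. For each edge $f$ and each length-$i'$ interval $I$ of $S'$, let $\mathcal E_{f,I}$ be the event ``more than $C'$ packets cross $f$ during $I$''; avoiding all of them is exactly the conclusion. Fix $(f,I)$. By the last observation of the previous step, the packets that can cross $f$ during $I$ lie among the $\le C$ packets crossing $f$ during the corresponding frame $F_t$ of $S$, and such a packet does so only if its offset $\delta_{p,t}$ falls into a window of at most $i'$ of its $i-i'$ possible values, an event of probability $\le \frac{i'}{i-i'}$; these events are independent over $p$. Hence the $f$-congestion of $I$ is stochastically dominated by $\text{Binomial}(C,\frac{i'}{i-i'})$, giving $P(\mathcal E_{f,I}) \le P(\text{Binomial}(C,\frac{i'}{i-i'}) > C')$. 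Moreover $\mathcal E_{f,I}$ is determined by the $\le C$ offsets just described, and a single offset $\delta_{p,t}$ influences $\mathcal E_{f',I'}$ only when $p$ traverses $f'$ during $F_t$ (at most $i$ choices of $f'$, since a packet crosses at most one edge per step) with $I'$ among the $i'$-intervals reachable by that crossing (at most $O(i)$ of them, inflated by at most a factor $m$ through the slot-stretching); a routine count therefore bounds the number of bad events dependent with $\mathcal E_{f,I}$ by $Cmi^2$.

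With these two estimates in hand, the symmetric LLL criterion $e\cdot P(\text{Binomial}(C,\frac{i'}{i-i'})>C')\cdot(Cmi^2+1)<1$ is precisely the hypothesis, so some choice of the $\delta_{p,t}$ avoids every $\mathcal E_{f,I}$, proving existence. For the algorithmic claim I would instead run the Moser--Tardos algorithm on the variables $\{\delta_{p,t}\}$ with bad-event set $\{\mathcal E_{f,I}\}$: the same inequality is the symmetric Moser--Tardos criterion, so by Lemma~\ref{mt-wit-lemma} (equivalently Theorem~\ref{resample-main-thm}(c)) the algorithm terminates after a polynomial expected number of resamplings, since there are at most $|E|\cdot L'$ bad events; a single step is implemented by scanning the current $S'$ for an over-congested $i'$-interval. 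The part I expect to be delicate is not the LLL application, which is mechanical, but the bookkeeping in the first step: arranging the frame-slots so that the length comes out to exactly $L(1+1/m)+i$, so that an offset of range $i-i'$ fits, and so that each $i'$-interval's crossings of an edge all originate in one frame of $S$ --- this is what legitimizes both the per-packet probability $\le\frac{i'}{i-i'}$ and the domination by $\text{Binomial}(C,\cdot)$ rather than $\text{Binomial}(2C,\cdot)$.
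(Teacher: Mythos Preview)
Your overall plan—random delays per frame, symmetric LLL on the events ``edge $f$ has congestion $>C'$ in $i'$-interval $I$'', then Moser--Tardos for the algorithm—is exactly the paper's approach. But you have the frame length wrong, and this breaks both the length accounting and the dependency bound.

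The paper partitions $S$ into frames of length $F = m i$, not $i$. Each packet receives a \emph{single} random delay per $mi$-frame, uniform in $\{0,\dots,i-i'\}$. With frames of length $mi$, a packet makes at most $mi$ moves in the frame; after a shift of at most $i-i'$ and a buffer of $i'$ between frames, each refined frame occupies at most $mi + (i-i') + i' = (m+1)i$ steps, so the $L/(mi)$ frames give total length $L(1+1/m)$, plus boundary, matching the stated $L'$. In your version the slot has only $i/m$ of slack while the shift can be as large as $i-i'-1$; your sentence ``using a bound on how many moves a packet can make within a single length-$i$ frame, one checks that the $i/m$ of slack per slot accommodates the shift'' is simply false—a packet can make $i$ moves in an $i$-frame, and nothing in the hypotheses forces $i-i'\le i/m$.

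The $mi$-frame is also where the factor $m$ in the dependency bound $Cmi^2$ comes from: a bad event $\mathcal E_{f,I}$ involves $\le C$ packets, each of which traverses $\le mi$ edges within the frame, and each such transit lands in $\le i$ of the $i'$-intervals, giving $C\cdot mi\cdot i = Cmi^2$. With your $i$-frames a single offset $\delta_{p,t}$ touches only $\le i$ edges, and the number of $i'$-intervals in a slot of length $i(1+1/m)$ is $\Theta(i)$, not $\Theta(mi)$; your phrase ``inflated by at most a factor $m$ through the slot-stretching'' has the factor going the wrong way and does not recover the $m$. Once you set $F=mi$, the remaining bookkeeping (probability $\le i'/(i-i')$ per packet, Binomial domination, the buffer between frames to isolate $i'$-intervals to a single frame) goes through exactly as you describe.
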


\begin{proof}
We break the schedule $S$ into frames of length $F = m i$, and refine each separately. Within each frame, we add a random delay to each packet separately. The delays are uniformly distributed in the range $\{0, \dots, i - i' - 1 \}$ and are independent. (We refer to this as \emph{adding a random delay in the range $i - i'$} to each packet)

Let us fix an $F$-frame for the moment. Each edge $f$ and $i'$-interval $I$ has a bad event $\mathcal B_{f,I}$ that the congestion exceeds $C'$. Each $f, I$ has at most $C$ possible packets that could traverse it, and each does so with probability at most $p = \frac{i'}{i - i'}$. Hence the probability of $\mathcal B_{f,I}$ is at most the probability that a binomial random variable with $C$ trials and probability $p$ exceeds $C'$.

If a packet $x$ was originally scheduled to cross some edge $f'$ at time $s \leq mi$ in the schedule $S$, then in the schedule $S'$ it potentially affects $f'$ within intervals $\{s - i' + 1, \dots, s \}, \dots, \{ s+ (i-i'), \dots, s+(i - i') + i' - 1 \}$, a total of $i$ intervals. Thus, overall $x$ can affect at most $m i^2$ other events $\mathcal B_{f', I'}$. Since there are at most $C$ packets which could affect $f, I$, this implies that each $\mathcal B_{f,I}$ affects at most $d = C m i^2$ other bad-events.

By the LLL, the condition in the hypothesis guarantees that there is a positive probability that the delays avoid all bad events. In this case, we refine each frame of $S$ to obtain a new schedule $S'$ as desired. We can use the MT algorithm to actually find schedule $S'$ in polynomial time.

So far, this ensures that \emph{within each frame}, the congestion within any interval of length $i'$ is at most $C'$. In the refined schedule $S'$ there may be intervals that cross frames. To ensure that these do not pose any problems, we insert a delay of length $i'$ between successive frames, during which no packets move at all. This step means that the schedule $S'$ may have length up to $L (1 + 1/m) + i$. 
\end{proof}

 Lemma~\ref{lmrlem1} allows us to transform the original problem instance into one where $C, D$ are small finite values, with a negligible cost to the approximation ratio. For simplicity here, we focus on the case in which $C, D$ are very large and so certain rounding effects can be disregarded.  
\begin{lemma}
\label{lmrlem2}
Assume $C+D \geq 2^{896}$. There is a schedule of length at most $1.004 (C+D)$ and in which the congestion on any interval of length $2^{24}$ is at most $17040600$. Furthermore, this schedule can be produced in randomized polynomial time.
\end{lemma}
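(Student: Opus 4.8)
The plan is to derive Lemma~\ref{lmrlem2} by iterating the refinement Lemma~\ref{lmrlem1}, in the spirit of \cite{lmr,schei:thesis}, starting from the trivial infeasible schedule. In that schedule every packet traverses its path without ever waiting; it has length $L_0 \le D$, and since each edge is crossed by at most $C$ packets in total, every interval (of any length whatsoever) has congestion at most $C_0 := C$. If $C \le 17040600$ this schedule already meets the conclusion, with $L_0 \le D \le 1.004(C+D)$, so assume from now on that $C > 17040600 > 2^{24}$. I would then fix a decreasing sequence of interval lengths $i_0 > i_1 > \cdots > i_J = 2^{24}$, frame multiplicities $m_0,\dots,m_{J-1}$, and congestion bounds $C_0 = C,\, C_1,\dots,C_J$, and apply Lemma~\ref{lmrlem1} at step $j$ to pass from a schedule with congestion $\le C_j$ on every $i_j$-interval to one with congestion $\le C_{j+1}$ on every $i_{j+1}$-interval, at the cost of multiplying the length by $1+1/m_j$ and adding an extra $i_j$.

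The interval lengths should shrink \emph{super}-geometrically --- taking $i_{j+1}$ roughly $\sqrt{i_j}$ --- so that only $J = O(\log\log(C+D))$ stages are needed to bring $i_0$ (which I take to be a value just above $C$, so that $C_0/i_0 \le 1$) down to $2^{24}$, and, crucially, so that in the final stages the quantities $C_j, i_j, m_j$ are all bounded by absolute constants. Writing $\rho_j := i_j/i_{j+1}$, the binomial appearing in Lemma~\ref{lmrlem1} at stage $j$ has $C_j$ trials and success probability $p_j = i_{j+1}/(i_j-i_{j+1}) = 1/(\rho_j-1)$, hence mean $\mu_j := C_j/(\rho_j-1)$; I would set $C_{j+1} := \lceil \mu_j(1+\delta_j) \rceil$ with $\delta_j := \sqrt{3\Lambda_j/\mu_j}$ and $\Lambda_j := \ln\!\big(e(C_j m_j i_j^2 + 1)\big)$, so that by the Chernoff bound $e\cdot P\big(\mathrm{Binomial}(C_j,p_j) > C_{j+1}\big)\cdot(C_j m_j i_j^2+1) < 1$, which is precisely the hypothesis of Lemma~\ref{lmrlem1}. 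Then $C_{j+1} \le \mu_j + \sqrt{3\Lambda_j\mu_j} + 1$, so the ``per-timestep'' congestion evolves multiplicatively: $C_{j+1}/i_{j+1} \le (C_j/i_j)\cdot\tfrac{\rho_j}{\rho_j-1}\cdot(1+\delta_j) + O(1/i_{j+1})$. Because all $\rho_j$ are large, the factors $\tfrac{\rho_j}{\rho_j-1}$ multiply to $1+o(1)$; because the $\mu_j$ are bounded below (the smallest is $\approx 2^{24}$) and the $\Lambda_j$ are at most logarithmic, the factors $1+\delta_j$ multiply to $1+o(1)$; and the additive corrections are negligible. Hence $C_J/i_J = (C_0/i_0)\cdot(1+o(1))$ stays below the available headroom $17040600/2^{24} = 1.0157\ldots$, giving $C_J \le 17040600$ as required. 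For the length, I would choose the $m_j$ large enough (a constant, or a very slowly growing sequence, suffices since $J$ is tiny) that $\prod_j (1+1/m_j) \le 1.003$; then, since $\sum_j i_j \le i_0(1+o(1)) \le C(1+o(1))$ and $L_0 \le D$, the final length is $L_J \le \big(L_0 + \sum_j i_j\big)\prod_j(1+1/m_j) \le 1.004(C+D)$. The construction is algorithmic throughout --- Lemma~\ref{lmrlem1} already supplies the Moser--Tardos-based constructive version --- and since $J = O(\log\log(C+D))$ it runs in polynomial time.

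The only genuinely nontrivial part is the quantitative bookkeeping: one must choose the precise sequences $i_j$ (as honest integers) and $m_j$, and control the slacks $\delta_j$, so that \emph{simultaneously} (i) $\delta_j \le 1$ and the Chernoff/LLL inequality of Lemma~\ref{lmrlem1} holds at every stage, (ii) the accumulated multiplicative and additive length penalties keep $L_J \le 1.004(C+D)$, and (iii) the congestion recursion lands at $C_J \le 17040600$ with $i_J$ exactly $2^{24}$. The delicate tension is that one wants $i_0$ as close to $C$ as possible, to keep $C_j/i_j$ near its natural floor of $\le 1$, while also wanting $\sum_j i_j$ to be only a tiny fraction above $C$; this, together with the ceilings in the definition of $C_{j+1}$ and the $O(1/i_{j+1})$ corrections, is exactly where the hypothesis $C+D \ge 2^{896}$ is used --- it forces every $\delta_j$, every rounding effect, and every lower-order correction to consume only a negligible part of the roughly $1.6\%$ headroom between $17040600$ and $2^{24}$. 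I would not expect any conceptual obstacle beyond this tuning; as the remark preceding the lemma signals, the proof is a careful instantiation of the LMR/Scheideler refinement scheme with the constants optimized.
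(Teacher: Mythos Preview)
Your high-level plan---iterate Lemma~\ref{lmrlem1} down a rapidly shrinking sequence of interval lengths, certifying each step by a Chernoff bound---is exactly what the paper (following \cite{lmr,schei:thesis}) does. But your parametrization has a real gap. You key the whole sequence on $C$, taking $i_0$ ``just above $C$''; the hypothesis $C+D\ge 2^{896}$ says nothing about $C$ by itself. Suppose $17040600<C\le 2^{30}$ while $D$ carries almost all of $C+D$. Then $i_0\approx C$ is already within a small constant factor of the target $i_J=2^{24}$, and a square-root step overshoots below $2^{24}$; you are forced into a single step with $\rho_0=C/2^{24}$ close to $1$. In that step the \emph{mean} alone is $\mu_0=C\,i_1/(i_0-i_1)=C\cdot 2^{24}/(C-2^{24})$, which for $C=2^{25}$ equals $2^{25}$ and for $C=17040700$ is roughly $10^9$---far above $17040600$. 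No choice of $C_1$ can then satisfy the hypothesis of Lemma~\ref{lmrlem1}, and your congestion recursion breaks precisely because of the factor $\rho_0/(\rho_0-1)$ you wrote down. So your assertion that $C+D\ge 2^{896}$ is used only to absorb rounding is off: as written, your scheme does not invoke that hypothesis at all in this regime, and it fails there.

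The paper sidesteps this by indexing the sequence via $C+D$ rather than $C$. It sets $a_0=256$, $a_{k+1}=2^{a_k}$, picks the unique $k$ with $a_k^{3.5}\le C+D<a_{k+1}^{3.5}$, and uses interval lengths $a_j^3$ for $j=k,k-1,\dots,0$; thus the starting interval length is at least $a_1^3=2^{768}$ regardless of $C$, and the last is $a_0^3=2^{24}$. The initial step is a ``slight variant'' of Lemma~\ref{lmrlem1} producing a schedule of length exactly $C+D$ with congestion $\le a_k^3(1+4/a_k)$ on $a_k^3$-intervals; subsequent steps take $m_j=a_{j+1}$. Because the $a_j$ form a tower, both $\prod_j(1+1/a_{j+1})$ and $\prod_j(1+4/a_j)\cdot(1-(a_j/a_{j+1})^3)^{-1}$ are dominated by their $j=0$ terms, yielding length $\le 1.004(C+D)$ and final congestion $\le 2^{24}\cdot 1.0157\approx 17040600$. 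Your square-root scheme can be repaired in the same spirit---e.g.\ anchor $i_0$ at $\max(C,2^{48})$ and absorb the resulting additive $i_0\le 2^{48}$ into the length via $C+D\ge 2^{896}$---but that case distinction and the corresponding first step need to be stated explicitly.
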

\begin{proof}
We provide a sketch here; see \cite{schei:thesis} for a much more thorough explanation of this process.

Define the sequence $a_k$ recursively as $a_0 = 256$ and $a_{k+1} = 2^{a_k}$. There is a unique $k$ with $a_k^{3.5} \leq C+D < a_{k+1}^{3.5}$. By a slight variant on Lemma~\ref{lmrlem1}, one can add delays to obtain a schedule of length $C+D$, in which the congestion on any interval of length $i' = a_k^3$ is at most $C' = i' (1 + 4/a_k)$.

At this point, we repeatedly apply Lemma~\ref{lmrlem1} with $i = a_j, i' = a_{j+1}$, for $j = k-1, \dots 0$. At each step, this increases the length of the resulting schedule from $L_j$ to $L_j (1 + 1/a_{j+1}) + a_j$, and increases the congestion on the relevant interval from $i (1+4/a_k) $ to
$$
i (1 + 4/a_k) \prod_{j=0}^{k-1} (1 +  4/a_{j}) (\frac{1}{1 - (a_j/a_{j+1})^3})
$$
(We use the Chernoff bound to estimate the binomial tail in Lemma~\ref{lmrlem1}.)

For $C+D \geq a_k^{3.5}$, a simple calculation shows that the schedule length increases from $C+D$ (after the original refinement) to at most $1.004 (C+D)$. In the final step of this analysis, we are bounding the congestion of intervals of length $a_0^3 = 2^{24}$, and the congestion on such an interval is at most $17040600$. 
\end{proof}

\begin{lemma}
\label{schedlem1}
If $C + D \geq 2^{896}$, then there is a feasible schedule of length at most $10.92(C+D)$ which can be constructed in randomized polynomial time.
\end{lemma}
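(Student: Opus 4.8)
The plan is to feed the nearly length-optimal infeasible schedule of Lemma~\ref{lmrlem2} into a short sequence of delay-insertion refinements that drives the controlled interval length down from $2^{24}$ to $2$, and then to turn the result into a feasible schedule by applying the Peis--Wiese routine of Proposition~\ref{peisprop} to each length-$2$ frame. Lemma~\ref{lmrlem2} hands us a schedule of length $L_0 \le 1.004(C+D)$ whose congestion on every interval of length $i_0 = 2^{24}$ is at most $C_0 = 17040600 < i_0(1+2^{-7})$; the crucial structural feature is that $C_0$ sits just above $i_0$, so throughout the refinement the mean congestion on the interval we are about to control will stay close to its length.

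The refinement step is the one of Lemma~\ref{lmrlem1}, but run with the Partial Resampling Algorithm in place of the LLL. Given a schedule whose congestion on length-$i$ intervals is at most $C$, break it into frames of length $mi$, give each packet an independent uniform delay in $\{0,\dots,i-i'\}$, and attach one bad-event to each pair (edge $f$, length-$i'$ interval $I$) asserting that more than $C'$ transits land in $I$ on $f$. Each such bad-event is a threshold on a sum of $\{0,1\}$-indicators (one per transit that could land there), with mean $\mu \le C\,\tfrac{i'}{i-i'}$; choosing $i'$ sufficiently small relative to $i$ keeps $\mu$ only slightly above $i'$. This is exactly the setting of Theorem~\ref{devthm}: use its width-$d$ fractional hitting set, bound the tail via Theorem~\ref{ch-d-thm} (so essentially $G(Q)\le\text{Ch}(\mu,C')$), note that a single delay variable influences only the $O(mi)$ edge-intervals on that packet's path in its frame, and check the criterion of Theorem~\ref{resample-main-thm2}. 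This lets one take $C' = \lceil i'(1+\epsilon')\rceil$ for a modest $\epsilon'$ depending on $i'$, $m$ and the column bound; each application stretches the schedule by a factor $(1+1/m)$ plus the additive inter-frame pause.

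Iterating through a fixed decreasing sequence $i_0 > i_1 > \cdots > i_r = 2$ (each $i_{t+1}$ chosen so that the new mean is only slightly above $i_{t+1}$, and $m$ chosen large at each stage so the length blowups telescope to at most a factor $1.01$), we obtain an infeasible schedule $S^*$ of length $L^* \le 1.01\cdot 1.004\,(C+D)$ whose congestion on every length-$2$ interval is at most an absolute constant $c$ (moderate rather than tiny, because $\epsilon'$ grows as $i'\to 2$). Now partition time into consecutive length-$2$ frames; by Proposition~\ref{peisprop} the delay-$2$, congestion-$\le c$ subproblem inside each frame has a feasible schedule of makespan $c+1$, and concatenating these over the $L^*/2$ frames yields a feasible schedule of makespan $\tfrac{L^*}{2}(c+1)$. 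Picking the sequence so that $\tfrac12\cdot 1.01\cdot 1.004\,(c+1)\le 10.92$ finishes the argument; each step is either the (efficient) PRA or the polynomial-time routine of Proposition~\ref{peisprop}, so the construction runs in polynomial time.

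The main obstacle is the quantitative juggling in the middle phase: one must pick the lengths $i_t$, the frame multipliers $m$, the widths $d$, and the thresholds $C'$ simultaneously so that (i) the criterion of Theorem~\ref{resample-main-thm2} genuinely holds at every stage --- which is delicate precisely near the end, where $i'$ is a small constant and $c$ is only a few times $2$, so the asymptotic Chernoff estimate is wasteful and $d$ must be optimized by hand, exactly as cautioned after Theorem~\ref{ch-d-thm} --- and (ii) the product of all the length blowups times $(c+1)/2$ comes out below $10.92$. Once those choices are pinned down the rest is routine bookkeeping.
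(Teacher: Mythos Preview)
Your outline—start from Lemma~\ref{lmrlem2}, refine down to length-$2$ intervals, then expand each $2$-frame via Proposition~\ref{peisprop}—matches the paper's. Two points of divergence, however.

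First, a methodological difference: you invoke the PRA (Theorems~\ref{devthm} and~\ref{resample-main-thm2}) for the intermediate refinements, whereas the paper's proof of this particular lemma uses only the standard LLL, i.e., Lemma~\ref{lmrlem1} as stated. The paper remarks immediately after this lemma that everything so far ``used nothing more than the conventional LLL''; the PRA enters only in the next subsection (Proposition~\ref{noncons1}), where it sharpens the constant to $8.77$. So you are deploying heavier machinery than the result requires.

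Second, and this is the real gap: you leave every parameter unspecified and close by saying that once the choices are ``pinned down the rest is routine bookkeeping.'' But pinning them down \emph{is} the proof---the lemma's entire content is the explicit constant $10.92$, and the tension you yourself identify (larger $m$ tames the length blowup but tightens the LLL/PRA criterion and forces $C'$ upward) is precisely what must be resolved numerically. The paper does this with exactly two applications of Lemma~\ref{lmrlem1}, both with $m=64$: from $(i,C)=(2^{24},17040600)$ to $(1024,1385)$, then to $(2,20)$; one checks directly that the binomial-tail condition of Lemma~\ref{lmrlem1} holds at each step. Each step multiplies the length by at most $1+1/64\le 1.0157$, and Proposition~\ref{peisprop} then turns each $2$-frame into a feasible schedule of makespan $21$, yielding
\[
\tfrac{21}{2}\cdot 1.0157^{2}\cdot 1.004\,(C+D)\ \le\ 10.92\,(C+D).
\]
Without exhibiting such choices and verifying the tail condition, your argument remains a plan rather than a proof.
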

\begin{proof}
By Lemma~\ref{lmrlem2}, we form a schedule $S_1$, of length $L_1 \leq 1.004 (C+D)$, in which each interval of length $2^{24}$ has congestion at most $17040600$.

Now apply Lemma~\ref{lmrlem1} to $S_1$, with $m = 64, i' = 1024, C' = 1385$ to obtain a schedule $S_2$, of length $L_2 \leq 1.0157 L_1 + 2^{24}$, in which each interval of length $1024$ has congestion at most $1385$.

Now apply Lemma~\ref{lmrlem1} to $S_2$ with $m = 64, i' = 2, C' = 20$, to obtain a schedule $S_3$ of length $L_3 \leq 1.0157 L_2 + 1024$, in which each frame of length 2 has congestion at most $20$.

Now apply Proposition~\ref{peisprop} to $S_3$, expanding each $2$-frame to a \emph{feasible} schedule of length $21$. The total length of the resulting schedule is at most $\frac{21}{2} L_3 \leq 10.92 (C+D)$.
\end{proof}

\subsection{The PRA applied to packet routing}
The schedule modification in Lemma~\ref{lmrlem1} essentially comes down to an assignment-packing problem: within each frame, we assign a delay to each packet, and a bad event corresponds to an edge receiving an excessive congestion in some time interval. We thus modify Lemma~\ref{lmrlem1} to use the PRA instead of the LLL.

\begin{proposition}
  \label{noncons1}
  Suppose there is a schedule $S$ of length $L$ such that every interval of length $i$ has congestion at most $C$. Let $m, C', d, i'$ be positive integers with $i' < i$ and $d \leq C'$, and let $\alpha \in [0,1]$ be a real number.
  
Define 
$$
 p = \frac{(C i' \alpha)^d} {d! \binom{C'+1}{d}}
$$

Suppose that $p < 1$ and 
$$
(i - i') \alpha - \frac{ m i^2 d p}{C (1-p)} \geq 1
$$

Then there is a schedule $S'$ of length $L' = L(1 + 1/m) + i$, in which every interval of length $i'$ has congestion at most $C'$. Furthermore, such a schedule can be found in polynomial time.
\end{proposition}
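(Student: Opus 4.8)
The plan is to re-run the proof of Lemma~\ref{lmrlem1} with the PRA in place of the LLL, using the width-$d$ fractional hitting-sets of Theorem~\ref{devthm} and the criterion of Theorem~\ref{resample-main-thm2}(b) (in the form of Theorem~\ref{resample-complex}). As in Lemma~\ref{lmrlem1}, break $S$ into frames of length $F = mi$ and refine each one independently. Within a frame, introduce one variable per packet, whose value is the delay assigned to that packet, drawn uniformly from the range $\{0,\dots,i-i'\}$ exactly as in Lemma~\ref{lmrlem1}; set $\lambda_{i,j} = \alpha$ for every admissible delay $j$, so that the induced distribution $p_{i,j} = \lambda_{i,j}/\lambda_i$ is uniform, as required, and $\lambda_i = (i-i'+1)\alpha \ge (i-i')\alpha$. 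For each edge $f$ and each $i'$-interval $I$ inside the frame, introduce a complex bad-event ``the congestion of $f$ on $I$ exceeds $C'$''; this is a $\{0,1\}$-weighted linear threshold with threshold $t = C'+1$, so it can be equipped with the width-$d$ hitting-set $Q_k$ of Theorem~\ref{devthm}. Give each pair $(f,I)$ its own label, so that, by Theorem~\ref{resample-complex}, no $\bowtie$-interactions between distinct bad-events need to be analyzed.

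Fix a bad-event $k = (f,I)$. At most $C$ packets traverse $f$ within the frame, and for each of them at most $i'$ choices of delay place its transit through $f$ inside $I$; every relevant coefficient equals $1$, so in the notation of Theorem~\ref{devthm} we have $\mu \le C i' \alpha$ and $\mu_i \le i'\alpha$ for every packet $i$. Since $\binom{C'+1}{d} \le (C'+1)^d/d!$, the hypothesis $p < 1$ forces $d \ge 1$ and $C i'\alpha < C'+1$, so $\mu < t$ and Theorem~\ref{devthm} does apply. As $\mu^d/(d!\binom{t}{d})$ is increasing in $\mu$, Theorem~\ref{devthm} yields $G(Q_k,\lambda) \le \frac{(C i'\alpha)^d}{d!\binom{C'+1}{d}} = p < 1$, hence $S_k(\lambda) \le p$, and
\[
G_i(Q_k,\lambda) \;\le\; \frac{d\,\mu_i\,\mu^{d-1}}{d!\binom{C'+1}{d}} \;\le\; \frac{d\,(i'\alpha)(C i'\alpha)^{d-1}}{d!\binom{C'+1}{d}} \;=\; \frac{d\,p}{C}.
\]
Now fix a packet $i$: it traverses at most $m i$ edges within its frame, and, for a fixed transit, as the delay ranges over the $i-i'+1$ consecutive admissible values its transit time sweeps $i-i'+1$ consecutive timesteps, which meet at most $i$ distinct $i'$-intervals; hence at most $m i^2$ bad-events $k$ have $G_i(Q_k,\lambda) > 0$. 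Therefore
\[
1 + \sum_k \frac{G_i(Q_k,\lambda)}{1 - G(Q_k,\lambda)} \;\le\; 1 + m i^2 \cdot \frac{d p / C}{1-p} \;=\; 1 + \frac{m i^2 d p}{C(1-p)} \;\le\; (i-i')\alpha \;\le\; \lambda_i,
\]
the penultimate inequality being precisely the displayed hypothesis. Thus the criterion of Theorem~\ref{resample-main-thm2}(b) holds for every variable.

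Consequently the PRA run on this frame terminates with probability one in a set of delays avoiding all of the bad-events, and the expected number of resamplings is at most $\sum_i \lambda_i = O(N(i-i')\alpha)$, which is polynomial. A single PRA step is polynomial-time: a violated $(f,I)$ is found by scanning the edges and $i'$-intervals of the frame, the corresponding atomic bad-event $B$ is the current set of transits landing in $I$, and a subset $Y \subseteq B$ can be drawn with probability proportional to $Q_k(Y)$ via the dynamic program described after Theorem~\ref{devthm}. Running this procedure on each $F$-frame of $S$ and inserting an idle gap of length $i'$ between consecutive frames — exactly as in Lemma~\ref{lmrlem1} — produces a schedule $S'$ of length at most $L(1+1/m) + i$ in which every $i'$-interval has congestion at most $C'$. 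The only real work is the bookkeeping of the middle paragraph: pinning down the $m i^2$ dependency bound and checking that the estimates on $\mu$ and $\mu_i$ reassemble into exactly the form of Theorem~\ref{resample-main-thm2}(b); everything else parallels Lemma~\ref{lmrlem1} and the proof of Theorem~\ref{csp-thm}.
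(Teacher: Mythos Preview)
Your proof is correct and follows essentially the same approach as the paper's: break into $F=mi$ frames, assign $\lambda_{x,t}=\alpha$, use the width-$d$ hitting-set of Theorem~\ref{devthm} with a separate label per edge/$i'$-interval pair, bound $G(Q_k,\lambda)\le p$ and $G_i(Q_k,\lambda)\le dp/C$, count the at-most-$mi^2$ bad-events touching each packet, and apply Theorem~\ref{resample-main-thm2}(b). If anything you are slightly more careful than the paper, explicitly verifying $d\ge 1$ and $\mu < t$ from the hypothesis $p<1$ before invoking Theorem~\ref{devthm}.
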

\begin{proof}
Suppose we add delays in the range $\{0, \dots, i - i' - 1 \}$ uniformly to each packet within each frame of length $F = m i$.  In this case, we have a variable corresponding to each packet $x$, and for each delay $\delta$ we assign $\lambda_{x,{\delta}} = \alpha$. For each edge $f$ and $i'$-interval $I$, we have a complex bad event $\mathcal B_{f, I}$ that the congestion in the interval exceeds $C'$. Each such bad-event uses the fractional hitting-set $Q_{f,I}$ with parameter $d$ as described in Theorem~\ref{devthm}.

For a given $f, I$,  we must compute $\mu$, which is the total contribution of $\lambda$ summed over all packets/delays which could contribute to the congestion of that edge-interval. There are at most $C$ packets which could be scheduled to pass through the given edge, and there are $i'$ possible delays which affect $f,I$. So, in all, the total contribution is  $\mu \leq C i' \alpha$. The bad event is that this exceeds $C'$, so $t = C' + 1$. By Theorem~\ref{devthm}, this gives $S_{f,I} = S(\mathcal B_{f,I}, Q_{f,I}, \lambda) \leq \frac{\mu^d} {d! \binom{C' + 1}{d}} \leq p$.

Next, consider some packet $x$; we wish to compute $\mu_x$, which is the total contribution to the bad-event $\mathcal B_{f,I}$ summed over all possible delays to packet $x$. There are at most $i'$ delays which can cause $x$ to transit $f$ within $I$, hence $\mu_x \leq i' \alpha$ if packet $x$ could cross edge $f$ in interval $I$. So $G_x(Q_{f,I}, \lambda) \leq \frac{i' \alpha}{\mu} d \frac{\mu^d} {d! \binom{C' + 1}{d}} \leq \frac{d p}{C}$. 

Each packet $x$ affects up to $m i$ edges within the frame; if a packet $x$ was originally scheduled to cross an edge $f$ at time $s \leq m i$ in the schedule $S$, then in the schedule $S'$ it potentially affects $f$ within the $i$ intervals $\{s-i'+1, \dots, s \}, \dots, \{ s+(i-i'), \dots, s+(i-i')+ i'-1 \}$. So summing $G_x(Q_{f, I}, \lambda)$ over all $f, I$ affected by packet $x$ yields 
$$
\sum_{f,I} G_x(Q_{f,I}, \lambda ) \leq \frac{ m i^2 d p}{C}
$$

In order to apply Theorem~\ref{resample-main-thm2} each packet $x$ must satisfy the constraint
\begin{equation}
\label{hh1}
\lambda_x \geq  1 + \sum_{f,I} \frac{G_x(Q_k, \lambda)}{1 - S_{f,I}}.
\end{equation}

Each packet $x$ has $\lambda_x = (i - i') \alpha$ and $S_{f,I} \leq p$ and $\sum_{f,I} G_x(Q_{f,I}, \lambda) \leq m i^2 d p / C$, so (\ref{hh1}) becomes
$$
(i - i') \alpha \geq 1 + \frac{m i^2 d p}{(1-p) C}
$$

This is precisely the constraint specified in the hypothesis. The expected number of resamplings is $\sum_{x, \delta} \lambda_{x, \delta} \leq N D \alpha$, which is polynomially bounded.
\end{proof}

We can use this to improve various steps in the construction.
\begin{proposition}
\label{noncons3}
Suppose $C+D \geq 2^{896}$. Then there is a schedule of length $\leq 1.00652 (C+D)$, in which every interval of length $1024$ has congestion at most $1320$, which can be constructed in randomized polynomial time.
\end{proposition}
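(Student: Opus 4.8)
The idea is a single application of Proposition~\ref{noncons1} to the schedule furnished by (the PRA-refined form of) Lemma~\ref{lmrlem2}, whose length is in fact $(1+o(1))(C+D)$ and in which every interval of length $2^{24}$ has congestion at most $17040600$. We invoke Proposition~\ref{noncons1} with $i=2^{24}$, $i'=1024$, $m=64$, input congestion $C\le 17040600$, and target congestion $C'=1312$; the width $d$ and the scaling $\alpha$ of the width-$d$ fractional hitting set of Theorem~\ref{devthm} are chosen during the verification. Granting that its two displayed hypotheses hold for a suitable pair $(d,\alpha)$, Proposition~\ref{noncons1} outputs, in polynomial time, a schedule of length $L_1(1+1/m)+i=\tfrac{65}{64}L_1+2^{24}$ in which every interval of length $1024$ has congestion at most $1312$. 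Since $\tfrac{65}{64}=1.015625$, $L_1\le(1+o(1))(C+D)$, and $C+D\ge 2^{896}$ swamps both the $o(1)$ correction and the additive $2^{24}$, this length is at most $1.0157(C+D)$, as claimed.

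\textbf{Checking the hypotheses.} Write $\lambda_x=(i-i')\alpha$, $\mu=Ci'\alpha=\lambda_x\cdot\frac{Ci'}{i-i'}$, and $t=C'+1=1313$. The key numerical fact is that the ``expected congestion'' $\frac{Ci'}{i-i'}=\frac{17040600\cdot1024}{2^{24}-1024}$ is just over $1040$, so if $\alpha$ is chosen with $\lambda_x$ only slightly above $1$ then $\mu<1050<t$, placing us in the regime $\mu<t$. Now pick $d$ to minimize $p=\frac{(Ci'\alpha)^d}{d!\binom{C'+1}{d}}=\frac{\mu^d}{d!\binom{t}{d}}$; the minimizer is $d=\lceil t-\mu\rceil$, which is about $270$, so in particular $d\le C'$. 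By Theorem~\ref{devthm} this $p$ bounds $G(Q_{f,I},\lambda)$ for every bad event, and by Theorem~\ref{ch-d-thm} it is at most $\mathrm{Ch}(\mu,t)=e^{t-\mu}(\mu/t)^t$, which with $t-\mu\approx 272$ and $\mu/t\approx 0.79$ is already below $e^{-30}$. Plugging $p\le e^{-30}$, $m=64$, $i^2=2^{48}$, $d\approx 270$, $C=17040600$ into the correction term $\frac{m i^2 d p}{C(1-p)}$ makes it of order $10^{-3}$, so the inequality $(i-i')\alpha-\frac{m i^2 d p}{C(1-p)}\ge 1$ holds once $\lambda_x=(i-i')\alpha$ is set a few thousandths above $1$ (which still keeps $\mu<t$), and $p<1$ trivially. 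This is exactly what Proposition~\ref{noncons1} demands.

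\textbf{Main obstacle.} The only real content is this numerical verification, and its delicate feature is the feedback in the parameter $\alpha$: one must push $\lambda_x$ above $1$ to absorb the correction term, but doing so raises $\mu$, shrinks $t-\mu$, and so \emph{exponentially} enlarges $p$ and hence the correction term, so $\lambda_x$ cannot be taken much bigger than $1$. The admissible window for $\lambda_x$ is narrow; it is nonempty because $p$ is exponentially small in $t-\mu=\Theta(t)$, and — crucially for driving the threshold all the way down to the stated $C'=1312$ rather than a slightly larger integer — because one should bound $p$ by the exact symmetric-polynomial quantity $\frac{\mu^d}{d!\binom{t}{d}}$ of Theorem~\ref{devthm} with an optimized $d$, which is noticeably smaller than the Chernoff surrogate $\mathrm{Ch}(\mu,t)$ used for the quick estimate above; this is precisely the hand-tuning of $d$ flagged in the remark after Theorem~\ref{ch-d-thm}. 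The polynomial running time and the frame-stitching accounting (the $+i$ in the length) are inherited verbatim from Proposition~\ref{noncons1}; only $\mu<t$ must be re-checked at the chosen $\alpha$, so that $d=\lceil t-\mu\rceil\ge 1$ and Theorems~\ref{devthm} and~\ref{ch-d-thm} apply.
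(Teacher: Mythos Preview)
Your approach is essentially the paper's: apply Lemma~\ref{lmrlem2}, then Proposition~\ref{noncons1} once with $i=2^{24}$, $i'=1024$, $m=64$, $C'=1312$. The paper simply states the explicit values $d=247$ and $\alpha=5.985\times 10^{-8}$ (giving $\lambda_x\approx 1.004$), a point in the same narrow feasible window you correctly identified; note, however, that Lemma~\ref{lmrlem2} uses the ordinary LLL and yields the fixed bound $1.004(C+D)$, not a $(1+o(1))(C+D)$ bound from any ``PRA-refined'' version.
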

\begin{proof}
By Lemma~\ref{lmrlem2}, form a schedule $S_1$, of length $L_1 \leq 1.004 (C+D)$, in which each interval of length $2^{24}$ has congestion at most $17040600$. 
Apply Proposition~\ref{noncons1} with $\alpha = 5.98328\times 10^{-8}, C' = 1320, d = 270, m = 400$ to obtain a schedule $S_2$ of length $L_2 \leq 1.0025 L_1 + 2^{24} \leq 1.00652 (C+D)$, in which each interval of length $1024$ has congestion at most $1320$.
\end{proof}

\begin{theorem}
\label{noncons4a}
Suppose $C+D \geq 2^{896}$. Then there is a schedule of length at most $8.61 (C+D)$ which can be constructed in randomized polynomial time.
\end{theorem}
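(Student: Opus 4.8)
The plan is to follow the same pipeline as the proof of Lemma~\ref{schedlem1}, but to replace the LLL-based refinement steps by applications of the Partial Resampling Algorithm through Proposition~\ref{noncons1}; the gain over the $10.92(C+D)$ bound comes entirely from the fact that partial resampling lets us refine down to a given interval length while targeting a \emph{smaller} congestion than the plain LLL allows. I would begin from the output of Proposition~\ref{noncons3}: a schedule $S_2$ of length at most $1.0157(C+D)$ (the additive $2^{24}$ term being negligible since $C+D \ge 2^{896}$) in which every interval of length $1024$ has congestion at most $1312$.

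Next I would apply Proposition~\ref{noncons1} once more to $S_2$ with $i = 1024$, $i' = 2$, $m = 64$, and a suitable pair $(d,\alpha)$, to obtain a schedule $S_3$ of length at most $L_2(1 + 1/m) + i \le 1.0157\,L_2 + 1024$ in which every interval of length $2$ has congestion at most a target $C'$. The point of this step is to drive $C'$ down to about $16$, rather than the value $20$ used in Lemma~\ref{schedlem1}. To see that this is feasible, write $\mu = C i' \alpha = 2624\,\alpha$ for the inflated mean seen by a single edge--interval pair; choosing $d \approx \lceil (C'+1) - \mu \rceil$ makes $p = \mu^d/\big(d!\binom{C'+1}{d}\big)$ a genuine Chernoff-type tail, hence doubly-exponentially small, so the correction term $\tfrac{m i^2 d\, p}{C(1-p)}$ in the hypothesis of Proposition~\ref{noncons1} is negligible and the main inequality reduces essentially to $1022\,\alpha \ge 1$. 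Taking $\alpha$ just above $1/1022$ gives $\mu \approx 2.6 \ll C'+1 \approx 17$, so $d \approx 15$ and $p$ is of order $10^{-8}$, which closes the loop and also meets the requirements $d \le C'$ and $p < 1$. This is a short numerical search, entirely analogous to the choice of the constants $\alpha = 5.985\times 10^{-8},\, d = 247$ in Proposition~\ref{noncons3}.

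Finally I would invoke Proposition~\ref{peisprop}: each length-$2$ interval of $S_3$, having congestion at most $C'$, expands to a \emph{feasible} subschedule of length $C'+1$, so the full feasible schedule has length at most $\tfrac{C'+1}{2}\cdot L_3$. Chaining the length bounds — $L_2 \le 1.0157(C+D)$ from Proposition~\ref{noncons3}, $L_3 \le 1.0157\,L_2 + 1024$ from the step above, and the additive term negligible because $C+D \ge 2^{896}$ — yields $L_3 \le 1.0157^2(C+D) + o(C+D)$, so with $C' = 16$ the final length is at most $\tfrac{17}{2}\cdot 1.0157^2 (C+D) \le 8.77(C+D)$. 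Every stage runs in expected polynomial time: Lemma~\ref{lmrlem2} and Proposition~\ref{noncons3} by their statements; the new PRA step by Theorem~\ref{resample-main-thm2} together with the polynomial-time sampling routine for the width-$d$ hitting set described after Theorem~\ref{devthm}; and Proposition~\ref{peisprop} is already polynomial.

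The main obstacle is the parameter tuning in the middle step: one must certify with explicit numbers that both hypotheses of Proposition~\ref{noncons1} hold simultaneously for the tuple $(i,i',m,C') = (1024,2,64,16)$ and some $(d,\alpha)$, while keeping $C'$ small enough that $\tfrac{C'+1}{2}$ times the accumulated length blow-up stays below $8.77$. One might also consider inserting an extra intermediate refinement (say to intervals of length $8$ or $16$) before the final length-$2$ step if that permits a smaller $C'$; whether this helps is a matter of comparing the resulting Chernoff tails against the extra $(1+1/m)$ length factor it costs. Everything else is routine substitution and the length-expansion bookkeeping inherited from \cite{schei:thesis}.
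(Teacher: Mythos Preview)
Your proposal is correct and follows essentially the same route as the paper: start from Proposition~\ref{noncons3}, apply Proposition~\ref{noncons1} with $(i,i',m,C')=(1024,2,64,16)$ to push the $2$-interval congestion down to $16$, then expand via Proposition~\ref{peisprop} and multiply out the length factors to get $\tfrac{17}{2}\cdot 1.0157^2(C+D)\le 8.77(C+D)$. The paper commits to the specific choice $d=12,\ \alpha=0.0011306$ rather than your heuristic $d\approx 15$ with $\alpha$ just above $1/1022$, but both are valid parameterizations; your discussion of a possible extra intermediate refinement is not needed.
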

\begin{proof}
By Proposition~\ref{noncons3}, there is a schedule $S_1$ of length at most $L_1 = 1.00652 (C+D)$ in which each interval of length $1024$ has congestion at most $1320$.

Now apply Lemma~\ref{lmrlem2} with $i = 1024, C = 1320, i' = 2, m = 100, C' = 16, d = 12, \alpha = 0.00107911$ to obtain a schedule $S_2$ of length $L_2 \leq 1.01 L_1 + 1024$, in which each interval of length $2$ has congestion at most $16$.

Now apply Proposition~\ref{peisprop} to $S_2$, expanding each $2$-frame to a \emph{feasible} schedule of length $17$. The total length of the resulting schedule is at most $\frac{17}{2} L_2 \leq 8.61 (C+D)$.
\end{proof}

\subsection{Better scheduling of the final 2-frame}
\label{better4}
Let us examine more closely the penultimate stage in the proof of Theorem~\ref{noncons4a}, in which the schedule gets divided into $2$-frames where the congestion of each edge is bounded by some parameter $C'$. For a given edge $f$ and time $t$, we define $c_t(f)$ to be the number of packets scheduled to cross $f$ at time $t$; for a given packet $x$ and time $t$, we define $e_t(x)$ to be the identity of the packet edge at time $t$ (possibly there is no edge, in which case $e_t(x) = \emptyset$).

For a given value of $t$, it is relatively likely that $c_t(f) + c_{t+1}(f)$ or $c_{t+2}(f) + c_{t+3}(f)$ are much larger than their mean.  However, it is unlikely that \emph{both} these bad events happen simultaneously. To take advantage of this, we construct a schedule in which we insert an ``overflow'' time between the 2-frames to handle the situation in which \emph{either} $c_t(f) + c_{t+1}(f)$ \emph{or} $c_{t+2}(f) + c_{t+3}(f)$ is too large.  Our goal will be to modify either of the intervals $\{t, t +1 \}$ or $\{t+2, t+3 \}$ to ensure that both have congestion at most some parameter $T$. 

For a given 2-frame $I$, we add two overflow time slots, before and after $I$, to schedule the excess packets. If an edge $f$ has more than $T$ transits scheduled during the interval $I$, then we can fix this by either finding some packet $x$ with $e_1(x) = f$ and shifting it into the earlier overflow time, or by finding some packet $x$ with $e_2(x) = f$, and shifting it into the later overflow time. See Figure~\ref{packetfig1}.

\vspace{0.6in}

\begin{figure}[h]
\begin{center}
\includegraphics[trim = 2.5cm 21.5cm 3.5cm 4cm,scale=0.5,angle = 0]{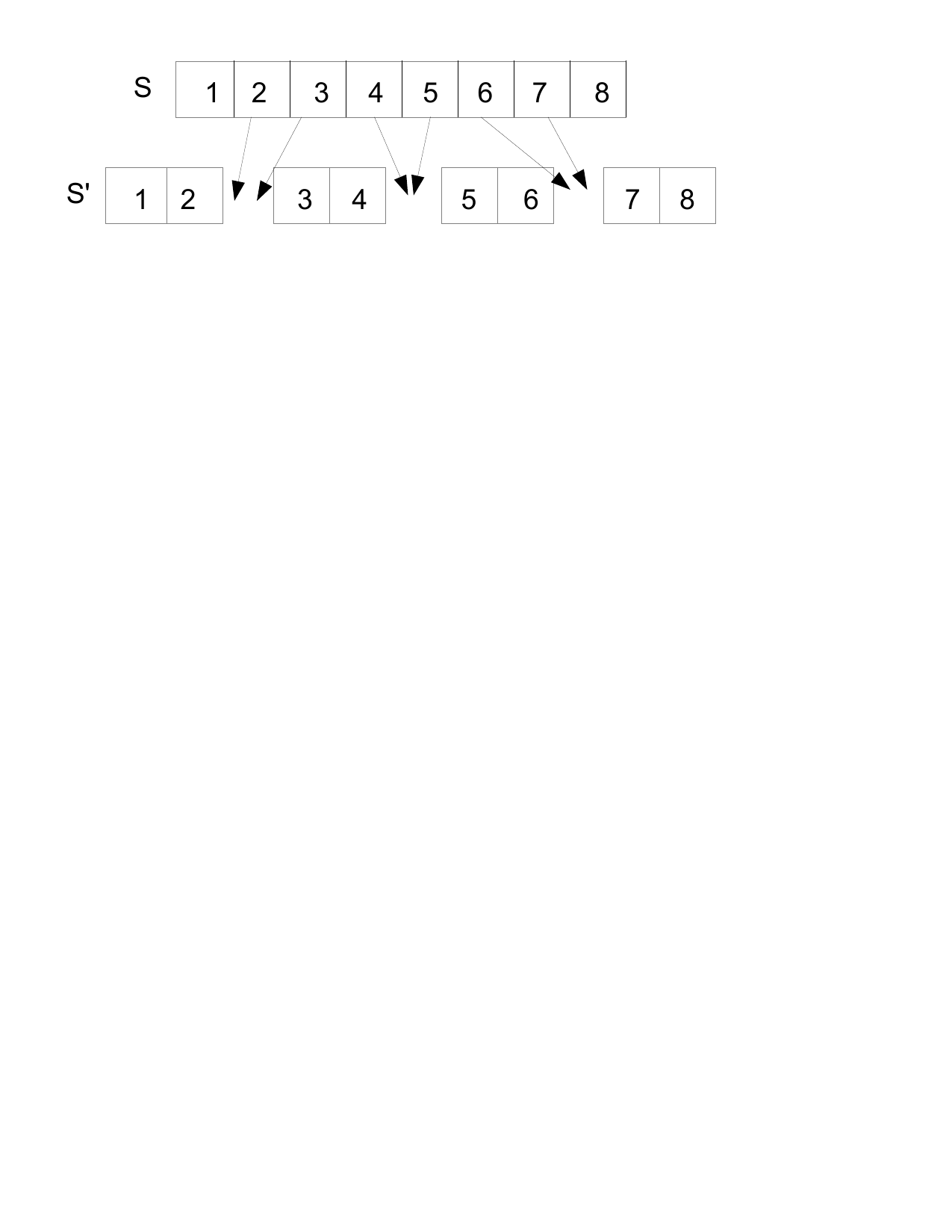}
\vspace{-0.2in}
\caption{The packets in the original schedule $S$ are shifted into overflow times in the schedule $S'$.
\label{packetfig1}}
\end{center}
\end{figure}

We need to be careful to account for how often a given edge $f$ appears as $e_t(x)$ or $e_{t+1}(x)$. For example, if there are no (remaining) packets with $e_{t+1}(x) = f$, then we are only allowed to shift $f$ into the earlier overflow, and similarly if there are no remaining packets with $e_{t}(x) = f$. Keeping this constraint in mind, we seek to equalize as far as possible the distribution of edges into earlier and later overflows. We do so as follows:

\begin{algorithmic}[1]
\FOR{each edge $f$ and each odd integer $t$}
\WHILE{$c_t(f) + c_{t+1}(f) > T$}
\STATE if $c_t(f) = 0$, then shift one packet into the later overflow time.
\STATE else if $c_{t+1}(f) = 0$, then shift one packet into the earlier overflow time.
\STATE else if $c_t(f) + c_{t+1}(f) = \text{odd}$, then shift one packet into the earlier overflow time.
\STATE else if $c_t(f) + c_{t+1}(f) = \text{even}$, then shift one packet into the later overflow time.
\ENDWHILE
\ENDFOR
\end{algorithmic}

For any odd integer $t$ and edge $f$, let $c'$ denote the congestions at the end of this overflow-shifting process, so that $c'_t(f) + c'_{t+1}(f) \leq T$. The number of packets shifted into the earlier (respectively later) overflow time can be viewed as a function of the original values $c_{t}(f), c_{t+1}(f)$. We denote these ``overflow'' functions by $\text{OF}^{-} (c_t, c_{t+1}; T)$ and $\text{OF}^{+} (c_{t},c_{t+1}; T)$ respectively. Specifically we get the following condition:
\begin{proposition}
\label{4frameprop}
Let $S$ be a schedule of length $L$, and let $c_t(f)$ for $t = 1, \dots, L$ denote the number of times $f$ is scheduled as the $t^{\text{th}}$ edge of a packet. Suppose that $T \geq T' \geq 1$, and suppose that for all edges $f$ and all odd integers $t$ the schedule $S$ satisfies the constraint
$$
\text{OF}^{+}(c_t(f), c_{t+1}(f);T) + \text{OF}^{-} (c_{t+2} (f),c_{t+3} (f); T) \leq T'
$$
Then there is a feasible schedule $S'$ of makespan $(L + 1)/2 \times (T + T' + 2) - 2$, which can be constructed in polynomial time.  (Note that for $t < 1$ and $t > L$, we define $c_t(f) = 0$.)
\end{proposition}
\begin{proof}
After the modification, each 2-frame has congestion at most $T$, while each overflow time has congestion at most $T'$. Each overflow time has delay at most 2, since for any packet $x$, there may be at most two edges scheduled into that overflow time, namely the edge that had been originally marked as the second edge of the earlier 2-frame, and the edge that had been originally marked as the first edge of the latter 2-frame. Hence each 2-frame can be scheduled in time $T + 1$ and each overflow can be scheduled in time $T' + 1$. Also note that the final and initial overflow times have delay $1$, so they can be scheduled in time $T'$. 

Let us first suppose that $L$ is even. There are $L/2$ 2-frames in the original schedule and so there are $L/2 + 1$ overflow periods. Hence the total cost is at most $\tfrac{L}{2} (T + 1)  + (\tfrac{L}{2} - 1) (T'+1) + 2 T' = \tfrac{L}{2} (2 + T + T') + T' - 1$. The condition $T \geq T'$
implies that this is at most $(L + 1)/2 \times (T + T' + 2) - 2$

If $L$ is odd, then we can merge the final overflow time into the final ordinary time; in this final $2$-frame, each edge has congestion at most $T + T'$. Thus, overall the cost is at most $\tfrac{L-1}{2} (T + 1) + T' + (\tfrac{L-1}{2} - 1) (T' + 1) + (T + T' + 1) = (L + 1)/2 \times (T + T' + 2) - 2$.
\end{proof}

The conditions required by Proposition~\ref{4frameprop} are local, in the sense that any violation is any event which affects an individual edge and a 4-interval which starts at an odd time $t$.  We refer to the conditions required by this Proposition as the \emph{4-interval-conditions (abbreviated 4IC)}; these conditions can be viewed as either pertaining to an entire schedule, or to individual 4-interval.

We will  use the PRA to find a schedule satisfying the conditions of Proposition~\ref{4frameprop}.

\begin{proposition}
\label{noncons2}
Let $m = 36 , C = 1320 , i = 1024,  T = 6, T' = 5$.

Suppose there is a schedule $S$ of length $L$ such that every interval of length $i$ has congestion at most $C$. There is a schedule of length $L' \leq (1 + 1/m) L + i$, which satisfies the 4IC with respect to $T, T'$. This schedule can be produced in polynomial time.
\end{proposition}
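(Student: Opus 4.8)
The plan is to instantiate the PRA on the delay‑assignment problem exactly as in Lemma~\ref{lmrlem1} and Proposition~\ref{noncons1}, but with bad events that encode the (stronger) $4$‑conditions of Proposition~\ref{4frameprop} in place of a plain congestion threshold. First I would break $S$ into frames of length $F=mi$ and, within each frame, give every packet an independent uniformly random delay drawn from a range $b=\Theta(i)$ small enough that the delay‑window of any aligned $4$‑interval lies inside a single length‑$i$ interval of $S$; the variable $X_x$ is the delay of packet $x$, and I would set $\lambda_{x,\delta}=\alpha$ uniformly for a scalar $\alpha$ to be optimized, so $p$ is uniform and $\lambda_x=(b+1)\alpha$. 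Inserting a dead interval of length $\Theta(i)$ between consecutive frames decouples them and accounts for the claimed bound $L'\le(1+1/m)L+i$, so it is enough to run the PRA within one frame and show it terminates in expected polynomial time. For each edge $f$ and each aligned $4$‑interval $I$ inside a frame, and (separately) for the two boundary $4$‑intervals, I would attach a distinctly labelled complex bad event asserting that the $4$‑condition for $(f,I)$ is violated, equipped with a fractional hitting set of the type furnished by Theorem~\ref{devthm}.

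The crux is reducing the $4$‑condition to linear‑threshold bad events. I would first record the combinatorics of the overflow‑shifting loop of step~1: each iteration reschedules exactly one transit out of its $2$‑frame, so $\mathrm{OF}^{+}(a,b;T)+\mathrm{OF}^{-}(a,b;T)=\max(0,a+b-T)$ for all $a,b$, hence $\mathrm{OF}^{\pm}(a,b;T)\le\max(0,a+b-T)$. Since $T'=T-1$, a violation of the $4$‑condition for $I=\{t,\dots,t+3\}$ means $\mathrm{OF}^{+}(c_t,c_{t+1};T)+\mathrm{OF}^{-}(c_{t+2},c_{t+3};T)\ge T$, and therefore $(c_t+c_{t+1}-T)_{+}+(c_{t+2}+c_{t+3}-T)_{+}\ge T$, which forces one of $c_t+c_{t+1}\ge 2T$, $\ c_{t+2}+c_{t+3}\ge 2T$, $\ c_t+c_{t+1}+c_{t+2}+c_{t+3}\ge 3T$ (only the first type survives for the boundary intervals). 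So the bad event for $(f,I)$ is contained in a union of at most three ordinary ``sum of indicators exceeds a threshold'' bad events with thresholds $2T=12$ and $3T=18$; because the $4$‑conditions are monotone, it suffices to supply a fractional hitting set for each of the three. For each threshold event the relevant elements are the $\le C$ transits of $f$ whose delay‑windows meet the interval, each with coefficient $1$ and at most $2$ (resp.\ $4$) admissible delays, so its mean $\mu$ is at most $2C\alpha$ (resp.\ $4C\alpha$); Theorem~\ref{devthm} then yields a width‑$d$ hitting set $Q$ with $G(Q,\lambda)\le \mu^{d}/\bigl(d!\binom{\mathrm{thr}}{d}\bigr)$ and $G_x(Q,\lambda)\le \tfrac{d\mu_x}{\mu}\,\mu^{d}/\bigl(d!\binom{\mathrm{thr}}{d}\bigr)$ with $\mu_x\le 4\alpha$, where $d\approx\lceil\mathrm{thr}-\mu\rceil$ is left as a tuning parameter; since $\mathrm{thr}\in\{12,18\}$ is small and fixed I would evaluate $\mu^{d}/(d!\binom{\mathrm{thr}}{d})$ directly rather than passing through the Chernoff bound of Theorem~\ref{ch-d-thm}.

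To finish I would bound dependencies and check the PRA criterion. A packet $x$ crosses at most $mi$ edges inside its frame, and each such transit is relevant to $O(i)$ aligned $4$‑intervals, so $x$ occurs in $O(mi^{2})$ bad events; summing the three $G_x$‑bounds over them and invoking Theorem~\ref{resample-main-thm2}(b) as in Proposition~\ref{noncons1}, the proposition reduces to exhibiting, for the stated $m=39$, $C=1312$, $i=1024$, $T=6$, $T'=5$, a scalar $\alpha$ and widths $d$ for which $(b+1)\alpha\ \ge\ 1+\sum_{\text{(3 thresholds)}}\dfrac{O(mi^{2})\,G_x(Q,\lambda)}{1-G(Q,\lambda)}$. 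Since $\lambda_x=\Theta(i\alpha)$ is linear in $\alpha$ while every $G_x$ scales like $\alpha^{d}$ with $d\ge 10$, this holds for a suitable constant $\alpha$; the expected number of resamplings is then $O\bigl(\sum_x\lambda_x\bigr)=O(Li\alpha)$, and one step of the PRA is polynomial (scan the current no‑hesitation schedule for a violated $(f,I)$, then sample from $Q$ via the dynamic program described after Theorem~\ref{devthm}). Reassembling the refined frames with the length‑$\Theta(i)$ buffers yields the claimed schedule.

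I expect the difficulty to be essentially quantitative rather than structural: one must choose the three widths $d$ and the common $\alpha$ so that the three $\binom{\mathrm{thr}}{d}$ denominators, the $1-G$ factors, and the $O(mi^{2})$ dependency count all cooperate against the tight constants $C=1312$, $T=6$, $T'=5$ inherited from Proposition~\ref{noncons3} (and so that $T=6$, $T'=5$ remains the optimum in the subsequent makespan computation). Extracting exactly the thresholds $2T$ and $3T$ from the $\mathrm{OF}^{\pm}$ analysis — so that the threshold over‑approximation is not wasteful — and pinning down the monotonicity of the $4$‑conditions carefully enough to justify the union bound are the places where hidden losses are most likely.
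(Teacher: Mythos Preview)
Your setup mirrors the paper's exactly: break into frames of length $mi$, add uniform delays in range $i-4$, attach a distinctly labeled complex bad event to each edge and aligned $4$-interval, and invoke Theorem~\ref{resample-main-thm2}(b). The divergence is in how the fractional hitting set for the $4$-condition is built, and this is where your proposal has a genuine gap.

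Your plan is to cover the bad event by a union of three linear-threshold events (two with threshold $2T=12$ on the two $2$-subintervals, one with threshold $3T=18$ on the full $4$-interval) and take $Q_{f,I}$ to be the sum of the three width-$d$ hitting sets from Theorem~\ref{devthm}. The reduction is correct, but the over-approximation is far too lossy for the stated constants. A direct computation shows why: with $C=1312$, $i=1024$ and any $\alpha\gtrsim 1/(i-4)$ (which is forced by $\lambda_x\geq 1$), the best achievable $p_{12}=\mu^{d}/(d!\binom{12}{d})$ is about $5\times 10^{-5}$ and $p_{18}$ about $3\times 10^{-5}$; the resulting per-element contribution $\Phi_0+\Phi_1+\Phi_2+\Phi_3$ is of order $10^{-6}$. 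But the convergence criterion $(i-4)\alpha-\tfrac{mi^2}{2}\cdot\tfrac{\Phi_0+\cdots+\Phi_3}{1-\Phi}\geq 1$ (which the paper derives and you would too) requires this quantity to be at most roughly $3.5\times 10^{-9}$, since $\tfrac{mi^2}{2}\approx 2\times 10^7$ and the slack $\lambda_x-1$ is at most a few hundredths. Your hitting set is off by a factor of roughly $300$, and no choice of $\alpha$ or widths $d$ closes this: decreasing $\alpha$ below $1/(i-4)$ makes $\lambda_x<1$, while increasing $\alpha$ makes the $\mu^d$ terms blow up. The root cause is that $\mathrm{OF}^{+}(a,b;T)\leq (a+b-T)_+$ is a weak bound---in the typical case with $a,b>0$ the overflow is split roughly in half between $\mathrm{OF}^{+}$ and $\mathrm{OF}^{-}$, so the threshold $12$ should morally be larger---and the three-event union throws this structure away.

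What the paper does instead is treat the $4$-condition as a single complex bad event and search for the optimal \emph{symmetric} hitting set directly: writing $Q_{f,I}(Y_0\cup Y_1\cup Y_2\cup Y_3)=b(|Y_0|,|Y_1|,|Y_2|,|Y_3|)$, the hitting-set constraint becomes a linear inequality for each of the $259$ minimal bad tuples $(k_0,k_1,k_2,k_3)$, and the objective $(\Phi_0+\cdots+\Phi_3)+t\Phi$ is linear in the unknowns $b(\cdot)$. Solving the resulting LP (about $12000$ variables, $259$ constraints) yields $\tfrac{\Phi_0+\cdots+\Phi_3}{1-\Phi}\leq 3.495\times 10^{-9}$ at $\alpha=0.001051$, which just clears the criterion. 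This LP-optimized hitting set is the key idea you are missing; without it the specific constants $m=39$, $T=6$, $T'=5$ are unattainable.
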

\begin{proof}
Our plan is to break the schedule into frames of size $F = m i$; within each packet and frame we add a random delay in the range $i - 4$. Let us fix a frame for the moment.

For each edge $f$, and 4-interval $I$ starting at time $s$ where $s$ is an odd integer, we have a complex bad event $\mathcal B_{f,I}$ that $$
\text{OF}^+ (c_s(f), c_{s+1}(f); T) + \text{OF}^{-}(c_{s+2}(f), c_{s+3}(f); T) > T'
$$

For this edge/interval $f, I$, and any packet $x$ with delay $t$, we say that $\langle x, t \rangle$ has \emph{type $j$}, if that packet-delay assignment would cause the given packet $x$ to land at position $s + j$ within the bad event, for $j = 0,\dots,3$. If that assignment $x, s$ does not contribute to $\mathcal B_{f, I}$, then $\langle x, s \rangle$ has no type. Note that each $f, I$ has at most $C$ packet-delay combinations of each type.

For a bad event $\mathcal B_{f,I}$ and a fractional hitting-set $Q_{f,I}$, we define the quantities $\Phi$ and $\Phi_j$, for $j = 0, 1, 2, 3$, as 
$$
\Phi_j = \max_{f,I} \max_{\substack{\text{$\langle x, t \rangle$ has type}\\ \text{$j$ for $f, I$}}} \sum_{Y \ni \langle x, t \rangle} Q_{f,I}(Y) \lambda^Y, \qquad \Phi = \max_{f,I} \sum_{Y} Q_{f,I}(Y) \lambda^Y 
$$

 We will apply Theorem~\ref{resample-main-thm2} using a separate complex-bad event for each $f, I$, and a separate variable for each packet (the value of a variable is the chosen delay), and the vector $\vec \lambda_{x,t} = \alpha = 0.001051$. For each such label $f,I$ we have $S_{f,I} = S(\mathcal B_{f,I}, Q_{f,I}, \lambda) \leq \sum_{Y} Q_{f,I}(Y) \lambda^Y \leq \Phi$. 

For any packet $x$ and delay $t$ and $j = 0, \dots, 3$, there are at most $m i/2$ pairs $f,I$ for which packet $x,t$ has type $j$. Each such $f,I$ has $G_{x,t}(Q_{f,I}, \lambda) \leq \Phi_j$. Summing over $f, I$ and the $i$ choices for the delay $t$ gives
$$
\sum_{f, I} G_x (Q_{f,I}, \lambda) \leq \frac{m i^2}{2} (\Phi_0 + \Phi_1 + \Phi_2 + \Phi_3)
$$

By Theorem~\ref{resample-main-thm2}, each packet $x$ must satisfy the constraint
$$
\lambda_x \geq 1 + \sum_{f,I} \frac{G_x(Q_{f,I}, \lambda)}{1 - S_{f,I}}
$$
We have $\lambda_x = (i-4) \alpha$, and so it suffices to satisfy the condition
\begin{equation}
\label{ss1}
(i - 4) \alpha - \frac{m i^2}{2} \frac{\Phi_0 + \Phi_1 + \Phi_2 + \Phi_3}{1 - \Phi} \geq 1
\end{equation}
in order to find acceptable delays. Such delays lead to a schedule of length $L' \leq (1 + 1/m) L + i$, which satisfies the 4IC with $T, T'$.

Thus, we have reduced our problem to constructing fractional hitting-sets $Q_{f,I}$ which have a sufficiently small value for $\frac{\Phi_0 + \Phi_1 + \Phi_2 + \Phi_3}{1 - \Phi}$. Although we have stated the proposition for a particular choice of parameters, we will walk through the algorithm we use to construct it next.

The bad event depends $\mathcal B_{f,I}$ is determined by the number of variables of each type assigned to edge $f$ on interval $I$. There are at most $C$ such variables of each type; to simplify the notation, we suppose there are exactly $C$. The fractional hitting-set $Q_{f,I}$ assigns weights to any subset of the $4 C$ variables involved in the bad event; we can write such a subset as $Y = Y_1 \cup Y_2 \cup Y_3 \cup Y_4$, where the packet/delays in $Y_j$ all have type $j$.

We will make $Q_{f,I}$ symmetric, in the sense that for any such $Y = Y_0 \cup Y_1 \cup Y_2 \cup Y_3$, the value of $Q_{f,I}(Y)$ depends solely on the cardinalities $|Y_0|, |Y_1|, |Y_2|, |Y_3|$. Thus, we define
$$
Q_{f,I}(Y_0 \cup Y_1 \cup Y_2 \cup Y_3) = b(|Y_0|, |Y_1|, |Y_2|, |Y_3|)
$$
where $b: [C]^4 \rightarrow [0,1]$ is a function which we will determine. Let us define $\hat \Phi_0$, which serve as an upper bound on $\Phi_0$, by 
$$
\hat \Phi_0 = \sum_{y_0, y_1, y_2, y_3} \tbinom{C - 1}{y_0 - 1} \tbinom{C}{y_1} \tbinom{C}{y_2} \tbinom{C}{y_3} b(y_0 , y_1, y_2, y_3) \alpha^{y_0 + y_1 + y_2 + y_3 }
$$
and similarly for $\hat \Phi_1, \hat \Phi_2, \hat \Phi_3, \hat \Phi$. Here $y_0, y_1, y_2, y_3$ denote the possible cardinalities of $Y_0, Y_1, Y_2, Y_3$ respectively. (The reason for the term $\binom{C-1}{y_0 - 1}$ here, as opposed to $\binom{C}{y_0}$, is that in computing $\Phi_0$, we have fixed the presence of a single packet/delay $\langle x, t \rangle$ with type $0$ for the given $f, I$. Thus, there are only $\binom{C-1}{y_0-1}$ choices for the \emph{additional} type-$0$ packets involved in $f,I$.)

We say a tuple $(k_0, k_1, k_2, k_3)$ is \emph{bad} if it satisfies 
$$
\text{OF}^+ (k_0, k_1; T) + \text{OF}^{-}(k_2, k_3; T) > T'
$$
and we say it is \emph{minimal bad} if it is bad, but no other $(k_0', k_1', k_2', k_3')$ strictly smaller than it is bad.

The fractional hitting-set must satisfy $\sum_{Y \subseteq A} Q(Y) \geq 1$ for any $A \in \mathcal B_{f,I}$. By symmetry, this means that if $k_0, k_1, k_2, k_3$ are minimal bad, then we require
\begin{equation}
\label{r1}
\sum_{y_0, y_1, y_2, y_3} \tbinom{k_0}{y_0} \tbinom{k_1}{y_1} \tbinom{k_2}{y_2} \tbinom{k_3}{y_3} b(y_0, y_1, y_2, y_3) \geq 1
\end{equation}

We set $b(y_0, y_1, y_2, y_3) = 0$ unless there is some minimal bad tuple $(k_0, k_1, k_2, k_3)$ with $k_0 \geq y_0, \dots, k_3 \geq y_3$. 

We are trying to satisfy $(\hat \Phi_0 + \hat \Phi_1 + \hat \Phi_2 + \hat \Phi_3)/(1-\hat \Phi) \leq t$ for some target value $t$. When $t$ is fixed, this is equivalently to minimizing $\hat \Phi_0 + \hat \Phi_1 + \hat \Phi_2 + \hat \Phi_3 + t \hat \Phi$. If we view the collection of all values $b(y_0, y_1, y_2, y_3)$ as linear unknowns, then we can view both the objective function and the constraints as linear. Hence this defines an LP, which we can solve using standard algorithms. We can then optimize $t$ by binary search.

For $T = 6, T' = 5$, the resulting linear program has $12000$ variables and $259$ constraints. This is too large to write explicitly, but we wrote computer code to generate and solves it. The resulting hitting-set achieves a bound of 
$$
\frac{\hat \Phi_0 + \hat \Phi_1 + \hat \Phi_2 + \hat \Phi_3}{1-\hat \Phi} \leq 3.495 \times 10^{-9}
$$
and this satisfies (\ref{ss1}). (We recommend that any reader who wishes to recover it should construct the linear program of (\ref{r1}) and solve it for themselves.)

To show that the number of resamplings is polynomially bounded, note that we are treating each $F$-frame separately. Thus, all of the quantities $\sum_x \lambda_x$ etc. for the PRA are functions of parameters $m, C, i, T, T'$: in particular, they do not depend on the overall problem size. So, the expected number of resamplings within each frame is some constant, and so the overall number of resamplings is $\text{poly}(N)$.
\end{proof}

We now apply this construction to replace the two final steps in the construction of Section~\ref{better4}.
\begin{theorem}
\label{final-thm}
There is a feasible schedule of makespan at most $6.73(C+D)$, which can be constructed in randomized polynomial time.
\end{theorem}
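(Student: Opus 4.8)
The plan is to compose the schedule-refinement results established above: we build on the construction behind Theorem~\ref{noncons4a}, but replace its final 2-frame step with the finer 4-conditions construction of Proposition~\ref{noncons2} followed by the overflow-shifting scheme of Proposition~\ref{4frameprop}. As throughout this section, I would assume $C+D \geq 2^{896}$, so that additive and rounding terms are absorbed into the slack; the regime of small $C+D$ is handled by a separate (and in fact more favourable) case analysis of the parameters.

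First, apply Proposition~\ref{noncons3} to obtain a schedule $S_1$ of length $L_1 \leq 1.0157(C+D)$ in which every interval of length $1024$ has congestion at most $1312$; this step already folds in Lemma~\ref{lmrlem2} and a single PRA refinement. Second, apply Proposition~\ref{noncons2} to $S_1$ with the parameters $m = 39$, $C = 1312$, $i = 1024$, $T = 6$, $T' = 5$. This produces a schedule $S_2$ of length
$$
L_2 \leq \Bigl(1 + \tfrac{1}{39}\Bigr) L_1 + 1024 \leq \tfrac{40}{39}\cdot 1.0157\,(C+D) + 1024
$$
satisfying the 4-conditions with respect to $T = 6$, $T' = 5$. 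The substance of this step is the linear-programming construction of the symmetric fractional hitting-sets $Q_{f,I}$ described in the proof of Proposition~\ref{noncons2}, which attains $(\Phi_0+\Phi_1+\Phi_2+\Phi_3)/(1-\Phi) \leq 3.495\times 10^{-9}$ and hence verifies the PRA criterion (\ref{ss1}) with $\alpha = 0.001051$.

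Finally, apply Proposition~\ref{4frameprop} to $S_2$: each aligned 2-frame is expanded into a feasible block of length $T+1 = 7$ and each overflow period into a feasible block of length $T'+1 = 6$, yielding a feasible schedule of makespan at most
$$
L_2 \cdot \frac{T+T'+2}{2} + T' = 6.5\,L_2 + 5 \leq 6.5\Bigl(\tfrac{40}{39}\cdot 1.0157\,(C+D) + 1024\Bigr) + 5 .
$$
Since $6.5 \cdot \tfrac{40}{39}\cdot 1.0157 < 6.772$ and the remaining additive constants are negligible for $C+D \geq 2^{896}$, this makespan is at most $6.78(C+D)$. Every step is constructive: Proposition~\ref{noncons3} and Proposition~\ref{noncons2} run the PRA in expected polynomial time by Theorem~\ref{resample-main-thm2}, the hitting-set linear program is solved once as preprocessing, and the overflow-shifting of Proposition~\ref{4frameprop} is explicitly polynomial-time; hence the whole construction terminates in expected polynomial time. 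The one place that requires genuine care rather than bookkeeping is confirming that the numerically chosen $\alpha = 0.001051$, together with the LP-optimized hitting-set of Proposition~\ref{noncons2}, really does satisfy (\ref{ss1}) at $m = 39$, $i = 1024$, $C = 1312$; once that single inequality is in hand, the theorem follows by composing the three length bounds above.
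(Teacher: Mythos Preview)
Your proof is correct and follows essentially the same approach as the paper: apply Proposition~\ref{noncons3}, then Proposition~\ref{noncons2}, then Proposition~\ref{4frameprop}, and multiply out the length factors. Your arithmetic is in fact more explicit than the paper's (which writes $L_2 \leq 1.0158\,L_1 + 1024$ and $6.67\,L_2$ where the actual values from $m=39$ and $T=6$, $T'=5$ are $40/39 \approx 1.0256$ and $6.5$, as you compute), but both routes arrive at the same bound $6.78(C+D)$.
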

\begin{proof}
We give the full proof in Appendix~\ref{full-proof-app} (which has numerous cases and calculations). Here, we focus on the most interesting case, where $C+D \geq 2^{896}$.

 By Proposition~\ref{noncons3}, we obtain a schedule $S_1$ of length $L_1 \leq 1.00652 (C+D)$, in which each interval of length $1024$ has congestion at most $1320$.

Now apply Proposition~\ref{noncons2}. This gives a schedule $S_2$ of length $L_2 \leq 1.02779 L_1 + 1024$ satisfying the 4IC with $T = 6, T' = 5$. By Proposition~\ref{4frameprop}, this yields a schedule whose makespan is $6.5 L_2 + 4.5 \leq 6.73 (C+D)$.
\end{proof}

\section{Acknowledgments}
We thank Tom Leighton and Satish Rao for valuable discussions long ago, which served as the foundation for this work;
but for their declining, they would be coauthors of this paper. We are thankful to Noga Alon, Venkatesan Guruswami, Bernhard Haeupler, 
Penny Haxell, and Jeff Kahn for helpful discussions, as well as to the STOC 2013, FOCS 2013, and journal referees for their valuable comments.

Aravind Srinivasan dedicates this work to the memory of his late grandmother Mrs.\ V.\ Chellammal (a.k.a.\ Rajalakshmi): memories of her
remain a great inspiration.

\bibliographystyle{plain}
\bibliography{lll-largesupp}

\appendix

\section{Comparison with the MT algorithm}
\label{compare-mt-sec}
To compare our results with the MT algorithm, let us consider the class of assignment-packing problems where $c_k = R$ for all $k$. We have seen (Proposition~\ref{csp-thm3}) that the PRA converges in polynomial time if the bad-events are defined with RHS vector $b_k = R'$; here $R'$ is a function of $R$ and of the maximum $\ell_1$-norm $D$ of the constraint matrix. Crucially, $R'$ is scale-free: it does \emph{not} depend on the number of variables $n$ or number of constraints $m$.  By contrast, we show that MT cannot achieve in sub-exponential time \emph{any} value $b_k$ which depends solely on $R, D$ or the number of non-zeroes in each column $D'$. This holds even if the entries of $A$ are in the set $\{0, 1 \}$.

There are many possible parametrizations of the LLL and the MT algorithm for this problem, including strategies based on iterated applications. We consider only the simplest of these, which has a separate bad-event for each row of the constraint matrix.

\begin{proposition}
Let $R, R'$ be fixed real numbers with $1 \leq R \leq R'$. For $m$ sufficiently large there is an assignment-packing problem with the following properties:
\begin{enumerate}
\item The system has entries in $\{0, 1 \}$,  has $m$ constraints, and has $n = \Theta(R m)$ variables.
\item There is a fractional solution $z$ achieving RHS value $c_k = R$.
\item For each $i \in [n]$ and each possible assignment $j \in F_i$, there is exactly one row $k$ with $a_{k,i,j} > 0$. (So the matrix has maximum $\ell_0$ norm of $D' = 1$.)
\item Suppose we run the PRA, using resampling probabilities given by $p_{ij} = z_{ij}$, and with RHS values given by $b_k = R + C \sqrt{R}$ for a universal constant $C$. Then the PRA (with an appropriate  fractional hitting-set) terminates in expected polynomial time.
\item Suppose we run the MT algorithm, using resampling probabilities given by $p_{ij} = z_{ij}$ and with RHS values $b_k$ given by  $b_k = R'$. Then the probability that this MT algorithm terminates after $2^{\phi m}$ steps is at most $2^{-\phi' m}$, where $\phi, \phi' > 0$ are parameters depending solely on $R, R'$.
\end{enumerate}
\end{proposition}

\begin{proof}
For $i \in [n]$, the variable $X_i$  has domain $F_i = \{1, \dots, m \}$. To construct the constraint matrix $A$, we select for each $i \in [n]$ a permutation $\pi_i \in S_m$ independently and uniformly at random. For $k \in [m]$ we then set $a_{k,i,j} = [ \pi_i(k) = j ]$. Thus, all the entries of the constraint matrix are either $0$ or $1$. Also, observe that for any $i,j$, the only value of $k$ with $a_{k,i,j} > 0$ is given by $k = \pi_i^{-1}(j)$.

This system has a fractional solution of $z_{i,j} = 1/m$ for all $i, j$. For any value of $k$ this gives
$$
\sum_{i,j} a_{k,i,j} z_{i,j} = \sum_{i,j} [\pi_i(k) = j] / m = n/m
$$
For $n = \lfloor R m \rfloor$, this satisfies the constraints fractionally with RHS vector $c_k = R$.

The convergence of the PRA with this fractional vector $z$ follows from Proposition~\ref{csp-thm3}.

Finally, we will show that the MT algorithm requires a long time to terminate. To begin, we will show that for any fixed vector $x_1, \dots, x_n \in [m]^n$, the probability (over the random choice of $A$) that $A_k x \leq R'$ for all $k  \in [m]$, is at most $e^{-\Omega(m)}$. To show this, we view the vector of counts $A_1 x, \dots, A_m x$ as what is known as a \emph{competing ball-and-urns problem}; there is an urn corresponding to each $k \in [m]$, there is a ball corresponding to each $i \in [n]$, and the value of $A_k x$ is the number of balls placed into urn $k$. We place ball $i$ into urn $k$ iff $\pi_i(k) = x_i$ --- in other words, the placement of each ball is independently chosen among the $k$ urns.

Consider some fixed value of $k$. The value of $A_k x$ is a binomial random variable, with number of trials $n$ and success probability $1/m$, and so
{\allowdisplaybreaks
\begin{align*}
\Pr(A_k x > R') &\geq \binom{n}{R'+1} (1/m)^{R'+1} (1 - 1/m)^{n-R'-1} \geq \frac{(n-R')^{R'+1} (1-1/m)^{n-R'-1} }{ (R'+1)! m^{R'+1} } \\
&\geq \frac{(R m - 1 -R')^{R'+1} (1-1/m)^{R m-R'-1} }{ (R'+1)! m^{R'+1} } \qquad \text{as $R m - 1 \leq n \leq R m$} \\
&\geq \Omega(1) \qquad \text{for fixed $R, R'$.}
\end{align*}
}

As shown in \cite{dubhashi-ranjan}, the events $A_1 x \leq R', \dots, A_m x \leq R'$ are negatively correlated. Thus,
$$
\Pr( \bigwedge_{k=1}^m A_k \leq R' ) \leq \prod_{k=1}^m \Pr(A_k \leq R') \leq \prod_{k=1}^m \Bigl( 1 - (1 - \Omega(1)) \Bigr) \leq e^{-\Omega(m)}
$$

Every constraint in this CSP depends on every variable. Thus, whenever the MT resamples a bad-event, it  resamples all variables. So after the MT performs $T$ resamplings,  the current value of the variables $X_1, \dots, X_n$ is simply the $T^{\text{th}}$ row of the resampling table. Consequently, a necessary condition for the MT algorithm to terminate after $T$ steps is that one of the first $T$ rows of the resampling table satisfies all the constraints. Since any individual row satisfies all the constraints with probability $e^{-\Omega(m)}$, the probability that MT terminates after $T$ rounds is at most $T e^{-\Omega(m)}$. This is $e^{-\Omega(m)}$ unless $T \geq e^{\Omega(m)}$.
\end{proof}

\section{Some results on the MT distribution}
\label{mt-dist-appendix}
If Theorem~\ref{resample-main-thm} holds, then we know that there exists a configuration which avoids all bad events. We may wish to learn more about such configurations, other than that they exist. One useful tool is the \emph{MT-distribution}, which is the distribution on the variables $X_1, \dots, X_n$ at the termination of the PRA. We write $\pmt(E)$ to mean the probability of event $E$ in this space. (Note that, in order for this to be well-defined, we must choose a fixed rule for which bad-event to resample if there are multiple candidates; the bounds we derive hold for any such rule.) This probability space has been analyzed for the MT algorithm in \cite{haeupler-saha-srinivasan, harris2017algorithmic, harris2016new}.

In this section we show bounds on the MT distribution analogous to those shown by \cite{haeupler-saha-srinivasan} for the original MT algorithm. We also examine how the parametrization in terms of $\lambda$ gives particularly simple formulas, which can be useful even for analyzing the MT algorithm. We illustrate by using the MT distribution (for the original MT algorithm) for bounds on weighted independent transversals.

We need two preliminary definitions. For $E \in \mathcal A$ and $\mathcal B \subseteq \mathcal A$, define
$$
\mathcal B[E] = \{ B \in \mathcal B \mid E \not \subseteq B  \}
$$

We also define the \emph{strict neighbor-set} of $E$ as follows:
\begin{definition}[\textbf{Strict neighbor-set}]
For a set $E \in \mathcal A$ and a set $\mathcal T \subseteq  \mathcal A$, we say that $\mathcal T$ is a \emph{strict neighbor-set} for $E$ (or $\mathcal T \in \sns(E)$) if the following conditions hold:
\begin{enumerate}
\item Every $Z \in \mathcal T$ has $Z \sim E$.
\item There do not exist $Z, Z' \in \mathcal T$ with $Z \sim Z'$.
\end{enumerate}
\end{definition}

\begin{theorem}
\label{dist-thm1}
Let $E \in \mathcal A$ and suppose that $\mu$ satisfies Theorem~\ref{resample-main-thm} for the set of bad-events $\mathcal B[E]$. Then, when we run the PRA on events $\mathcal B$, we have
$$
\pmt(E) \leq p^E \sum_{\mathcal T \in \sns(E)} \prod_{Y \in \mathcal T} \mu(Y) \leq p^E \prod_{Y \sim E} (1 + \mu(Y))
$$
\end{theorem}
\begin{proof}
We assume that there is no bad-event $B \subseteq E$; for, if so, the the PRA output can never satisfy $E$ and so the result holds trivially.

We use a coupling construction following that of \cite{haeupler-saha-srinivasan}. Consider running the PRA with the set of bad-events $\mathcal B' = \mathcal B[E] \cup \{ E \}$ and fractional hitting-set $Q'$ defined by
$$
Q'(Y) = \begin{cases}
1 & \text{if $Y = E$} \\
Q(Y) & \text{otherwise}
\end{cases}
$$

When we run the PRA on $\mathcal B'$, we make a small change: whenever there is a choice of bad-event to resample, we will always choose to resample $E$ before $B \in \mathcal B$ if possible. We take advantage here of our freedom to select an arbitrary bad-event to resample, if there are multiple choices.   Observe that the PRA on $\mathcal B$ has identical behavior to the PRA on $\mathcal B'$, up to the first time $t$ when $E$ is true.  Since $Q(E) = 1$ and there are no bad-events $B \subseteq E$, the PRA on $\mathcal B'$ will have $E$ as its resampled set at time $t$. The witness tree $\hat \tau^t$ thus has its root node labeled $E$. 

We make a number of other observations about this tree $\hat \tau^t$. First, every subtree of $\hat \tau^t$ is a proper tree-structure with respect to the set of bad-events $\mathcal B[E]$. The reason for this is $E$ is never the resampled set before time $t$, and so does not affect the generation of $\hat \tau^t$.  Second, consider the set of children $w_1, \dots, w_s$ of the root node of $\hat \tau^t$. By Proposition~\ref{proper-ts}, these have distinct labels and $\{L(w_1), \dots, L(w_s) \} \in \ns( E )$. Suppose that $L(w_i) \bowtie E$. Then $E \subseteq B, L(w_i) \subseteq B$ for some bad-event $B \in \mathcal B'$. By definition of $\mathcal B'$ this is only possible if $E = B$ in which case $L(w_i) = E \not \bowtie E$. So in fact $\{ L(w_1), \dots, L(w_s) \} \in \sns(E)$.

Define $\Gamma^*$ to be the set of all tree-structures satisfying these properties. By Lemma~\ref{couple-lemma}, we have:
\begin{align*}
\pmt(E) \leq [\text{some $\tau \in \Gamma^*$ appears during execution of PRA on $\mathcal B'$}]  \leq \sum_{\tau \in \Gamma^*} w(\tau)
\end{align*}

Now, any $\tau \in \Gamma^*$ has a root node labeled $E$, and its children $w_1, \dots, w_s$ have distinct labels $\{ L(w_1), \dots, L(w_s) \} \in \sns(E)$. Furthermore, the subtrees of $w_1, \dots, w_s$ are proper tree-structures with respect to $\mathcal B[E]$. Thus, by Proposition~\ref{wt-bound-prop}, we have
\begin{align*}
\sum_{\tau \in \Gamma^*} w(\tau) &\leq p^E Q(E) \sum_{\mathcal T \in \sns(E)} \prod_{Y \in \mathcal T} \sum_{\tau_Y \in \Gamma(Y)} w(\tau_Y) \leq p^E Q(E) \sum_{\mathcal T \in \sns(E)} \prod_{Y \in \mathcal T} \mu(Y)
\end{align*}

Finally, note that $Q(E) = 1$, and we have shown the first bound on the probability of $E$.
\end{proof}

We can extend this result to the setting of Theorem~\ref{resample-main-thm-2} and Theorem~\ref{resample-main-thm2}:
\begin{corollary}
\label{dist-thm1-2}
Let $E \in \mathcal A$  and suppose that $\mu$ satisfies Theorem~\ref{resample-main-thm-2} for events $\mathcal B[E]$. Then
$$
\pmt(E) \leq p^E \sum_{\mathcal T \in \sns(E)} \prod_{Y \in \mathcal T} \sum_{k} \mu(Y,k) \leq p^E \prod_{Y \sim E} (1 + \sum_k \mu(Y,k))
$$
\end{corollary}
\begin{proof}
Consider forming some strict neighbor-set $\mathcal T \in \sns(E)$, with respect to the expanded set of bad-events $\tilde {\mathcal B}$ over the set of elements $\tilde {\mathcal X}$. If $\mathcal T$ is not good, then its contribution $\prod_{Y \in \mathcal T} \tilde \mu(Y)$ is zero. If $\mathcal T$ is good, then it can corresponds to $\{ (Y_1, k_1), \dots, (Y_r, k_r) \}$, where $\{ Y_1, \dots, Y_r \}$ is a strict neighbor-set of $E$ (with respect to the original set of elements $\mathcal X$), and furthermore  $\prod_{Y \in \mathcal T} \tilde \mu(Y) = \prod_{i=1}^r \mu(Y_i, k_i)$. Thus,
\[
\sum_{\substack{\mathcal T \subseteq \tilde {\mathcal A} \\ \mathcal T \in \sns(E)}}  \prod_{Y \in \mathcal T} \tilde \mu(Y) \leq \sum_{\substack{\mathcal T \subseteq \tilde {\mathcal A} \\ \mathcal T \in \sns(E)}} \prod_{Y \in \mathcal T} \sum_{k \in [K]} \mu(Y,k) \qedhere
\]
\end{proof}

\begin{theorem}
\label{dist-thm2}
If $\lambda$ satisfies Theorem~\ref{resample-main-thm2}, then any atomic set $E$ has $\pmt(E)  \leq \lambda^E$.
\end{theorem}
\begin{proof}
Let us enumerate $\mathcal T \in \sns(E)$ to apply Theorem~\ref{dist-thm1-2}. For each $(i,j) \in E$, the set $\mathcal T$ may contain one or zero sets $Y \sim i$. Therefore Theorem~\ref{dist-thm1-2} gives:
\begin{align*}
\pmt(E) &\leq p^E \sum_{\mathcal T \in \sns(E)} \prod_{Y \in \mathcal T} \sum_k \mu(Y,k) \leq p^E \prod_{(i,j) \in E} (1 + \sum_{Y \sim i} \sum_k \mu(Y,k)) \\
&\leq p^E \prod_{(i,j) \in E} \lambda_i =\lambda^E \qquad \text{(by Proposition~\ref{resample-main-thm3})} \qedhere
\end{align*}
\end{proof}

We can obtain a stronger bound than Theorem~\ref{dist-thm2} when $E$ is defined by a \emph{single variable}. In order to state this result, it is useful to define
$$
H_{i,j} = \sum_k \sum_{Y \ni (i,j)} \frac{Q_k(Y) \lambda^Y}{1 - S_k}
$$
and similarly the ``summation notation'' $H_i = \sum_j H_{i,j}$. We also assume throughout that $\lambda_{i,j} \geq H_{i,j}$ for each $(i, j)$ (if not, simply set  set $\lambda_{i,j} = 0$).

\begin{theorem}
\label{distthm3}
Let $u \in [n]$, and suppose that $\lambda$ satisfies Theorem~\ref{resample-main-thm2}. Let $J \subseteq F_u$, where recall $F_u$ is the set of possible assigned value to variable $X_u$. Then:
$$
\pmt(X_u \in J) \leq \frac{ \sum_{j \in J} \lambda_{u,j} }{\lambda_u - H_u + \sum_{j \in J} H_{u,j}}
$$
\end{theorem}
\begin{proof}
We define a function $\mu$ for the set of bad-events $\mathcal B[E]$; note that $\mathcal B[E]$ is derived from $\mathcal B$ by removing every bad-event $B \in \mathcal B$ such that $(i,j) \in B$ for some $j \in J$. We therefore define 
$$
\mu(Y,k) = 
\begin{cases}
\frac{\lambda^Y Q_k(Y)}{1 - S_k} & \text{if $u \not \sim Y$} \\
\alpha \frac{\lambda^Y Q_k(Y)}{1 - S_k}  & \text{if $(u,j) \in Y$ for $j \notin J$} \\
0  & \text{if $(u,j) \in Y$ for $j \in J$}
\end{cases}
$$
where $\alpha \in [0,1]$ is some parameter to be determined. We need to check that $\mu$ satisfies Theorem~\ref{resample-main-thm-2} with respect to $\mathcal B[E]$. This is nearly identical to the proof of Theorem~\ref{resample-main-thm2}; the only difficult case is to check the condition on $\mu(Y,k)$ where $(u,j) \in Y$ and $j \notin J$. For this, we have
{\allowdisplaybreaks
\begin{align*}
&p^Y Q_k(Y) \sum_{\mathcal T \in \gns(Y,k)} \prod_{ (Y', k') \in \mathcal T } \mu(Y', k') \\
& \qquad \leq p^Y Q_k(Y) (1 + \sum_{Z \bowtie_k Y} \mu(Z,k)) (1 + \sum_{Z \sim u} \sum_k \mu(Z,\ell)) \prod_{i \sim Y, i \neq u} (1 + \sum_{Z \sim i} \sum_{l} \mu(Z, \ell))  \\
& \qquad \leq p^Y Q_k(Y) \frac{1}{1 - S_k} (1 + \sum_{Z \sim u} \sum_k \mu(Z,\ell)) \prod_{i \sim Y, i \neq u} \lambda_i \qquad \text{(by Proposition~\ref{resample-main-thm3})} \\
& \qquad \leq \frac{p^Y Q_k(Y)}{1 - S_k} (1 +  \sum_\ell \sum_{j \notin J} \sum_{(u,j) \in Z} \frac{ \alpha \lambda^Z Q_\ell(Z)}{1 - S_\ell})  \prod_{i \sim Y, i \neq u} \lambda_i \\
& \qquad = \frac{p^Y Q_k(Y)}{1 - S_k} (1 + \alpha \sum_{j \notin J} H_{u,j} ) \prod_{i \sim Y, i \neq u} \lambda_i = \frac{\lambda^Y Q_k(Y)}{1 - S_k} \times  \frac{1 + \alpha \sum_{j \notin J} H_{u,j}}{\lambda_u}
\end{align*}
}

Since $\mu(Y,k) = \alpha \frac{\lambda^Y Q_k(Y)}{1 - S_k}$, it suffices to satisfy
$$
\alpha \frac{\lambda^Y Q_k(Y)}{1 - S_k} \geq \frac{\lambda^Y Q_k(Y)}{1 - S_k} \times  \frac{1 + \alpha \sum_{j \notin J} H_{u,j}}{\lambda_u} 
$$

So, after a little algebra, it suffices to take:
$$
\alpha = \frac{1}{\lambda_u - \sum_{j \notin J} H_{u,j}} = \frac{1}{\lambda_u - H_u + \sum_{j \in H} H_{u,j}}
$$
Note that $\lambda_u \geq 1 + \sum_j H_{u,j}$, so $\alpha$ is indeed in the range $[0,1]$ as desired. 

Corollary~\ref{dist-thm1-2} thus gives:
{\allowdisplaybreaks
\begin{align*}
\pmt(X_u \in J) &= \sum_{j \in J} p_{u,j} \sum_{\mathcal T \in \sns(E)} \prod_{Y \in \mathcal T} \sum_{k} \mu(Y,k) \\
&= \bigl( \sum_{j \in J} p_{u,j} \bigr) \bigl( 1 + \sum_{\ell \notin J} \sum_{Y \ni (u,\ell)} \sum_k \alpha \frac{\lambda^Y Q_k(Y)}{1 - S_k} \bigr) \\
&= \bigl( \sum_{j \in J} p_{u,j} \bigr) \bigl( 1 + \alpha \sum_{\ell \notin J} H_{u,l\ell} ) = \bigl( \sum_{j \in J} p_{u,j} \bigr) \bigl( 1 + \frac{\sum_{\ell \notin J} H_{u,\ell} }{\lambda_u - \sum_{\ell \notin J} H_{u,\ell}} \bigr) \\
&= \bigl( \sum_{j \in J} p_{u,j} \bigr) \bigl( \frac{\lambda_u}{\lambda_u - \sum_{\ell \notin J} H_{u,\ell}} \bigr) = \frac{ \sum_{j \in J} \lambda_{u,j} }{\lambda_u - H_u + \sum_{j \in J} H_{u,j}} \qedhere
\end{align*}
}
\end{proof}

\begin{corollary}
\label{distcorr2}
Suppose that $\lambda$ satisfies Theorem~\ref{resample-main-thm2} and let $J \subseteq F_u$.  Then:
$$
\pmt(X_u \in J) \geq \frac{\sum_{j \in J} \lambda_{u,j} - \sum_{j \in J} H_{u,j}}{\lambda_u - \sum_{j \in J} H_{u,j}}
$$
\end{corollary}
\begin{proof}
Apply Theorem~\ref{distthm3} to bound from above the probability of $X_u \in F_u - J$.
\end{proof}

\subsection{The MT distribution and independent transversals}
To illustrate our MT distribution results, let us  consider \emph{weighted} transversals, as discussed in \cite{aharoni-berger-ziv}. Suppose $G$ has maximum degree $\Delta$ and each block of $G$ has size exactly $b$. Given a weighting function $w: V \rightarrow [0, \infty)$, we may wish to find an independent transversal of minimum or maximum weight. Clearly, $G$ has a transversal (not necessarily independent) of weight at most (respectively at least) $w(V)/b$; we would like to find an independent transversal whose weight is comparable to this.

One effective method to find weighted independent transversals is to find a \emph{strong coloring of $G$}, which is a decomposition $V = I_1 \sqcup \dots \sqcup I_b$,  wherein each $I_i$ is an independent transversal of $G$. Clearly, given such  a strong coloring of $G$, we can find in polynomial time an independent transversal $I$ such that $w(I) \geq w(V)/b$ (respectively, $w(I) \leq w(V)/b$).  When $b$ is large compared to $\Delta$, then such strong colorings exists and can even be found efficiently.
 \begin{proposition}[\cite{haxell2008improved}, \cite{harris2014constructive}, \cite{graf2018finding}]
   \label{5cor}
When $b \geq \tfrac{11}{4} \Delta$ and $\Delta \geq \Delta_0$ for some constant $\Delta_0$, then a strong coloring of $G$ exists. When $b \geq 5 \Delta$, or when $b \geq 3 \Delta +1$ and $\Delta$ is constant, then a strong coloring of $G$ can be found in randomized polynomial time.
\end{proposition}

A more general method to find such weighted independent transversals is given by \cite{aharoni-berger-ziv} via \emph{fractional} strong colorings. A fractional strong coloring of $G$ is a probability distribution $\Omega$ over independent transversals $I$, with the property that any vertex $v$ has  $P_{\Omega}(v \in I) = 1/b$.

\begin{proposition}[\cite{aharoni-berger-ziv}]
When $b \geq 2 \Delta$, there exists a fractional strong coloring of $G$. In particular, there exists an independent transversal $I$ with $w(I) \geq w(V)/b$ (respectively, $w(I) \leq w(V)/b$).
\end{proposition}

When $b \geq 4 \Delta$, we can use the MT algorithm to construct independent transversals whose weight can be upper-bounded or lower-bounded in terms of $w(V)$. (When $b \geq 5 \Delta$, then these follow immediately from Proposition~\ref{5cor} already.) The two constructions are very similar, so we summarize them here together. We apply the PRA, in which there is a variable $X_i$ corresponding to each block $V_i$; we set $X_i = v$ to mean that $v \in V_i \cap I$. We will apply Theorem~\ref{resample-main-thm2} by selecting a subset $B_i \subseteq V_i$ of size $|B_i| = r \geq 4 \Delta$, and defining
$$
\lambda_{i,v} = \alpha [v \in B_i] \qquad \text{ where } \alpha = \frac{r - \sqrt{r} \sqrt{r-4 \Delta}}{2 r \Delta}
$$

We take as our set of bad-events a single family $\mathcal B_1$, which contains a separate atomic bad-event corresponding to each edge. We also use the trivial hitting-set. With this choice, $\bowtie_1$ is null. So $\lambda_i = r \alpha$ and we have
$$
G_i(Q, \lambda) = \sum_{v \in B_i} \sum_{\substack{\text{edges $f$}\\ \text{involving $v$}}} \alpha^2 \leq r \Delta \alpha^2
$$

Thus, Theorem~\ref{resample-main-thm2} is satisfied. Furthermore, for any block $i$ and $v \in B_i$ we have $H_{i,v} \leq \alpha^2 \Delta$. We will show that, by selecting the sets $B_i$ appropriately, the expected weight of the resulting independent transversal satisfies certain bounds. We can easily achieve an actual independent transversal whose weight is close to the expected weight by a polynomial number of repetitions.

\begin{theorem}
\label{thm:weighted-transversals}
Let $G$ be a graph of maximum degree $\Delta$ whose vertex set is partitioned into blocks of size exactly $b$. If $4 \Delta \leq b \leq 4.5 \Delta$, then there is an efficient procedure to randomly sample an independent transversal $I$ of $G$ such that
$$
\bE[w(I)] \geq w(V) \Bigl( \frac{\sqrt{b} + \sqrt{b - 4 \Delta}}{\sqrt{b} (2 b - 1) + \sqrt{b - 4 \Delta}} \Bigr) \geq \frac{w(V)}{8 \Delta - 1}.
$$

If $4.5 \leq b \leq 5 \Delta$, then there is an efficient procedure to randomly sample an independent transversal $I$ of $G$ such that
$$
\bE[w(I)] \geq \frac{4  w(V)}{27 \Delta}
$$
\end{theorem}
\begin{proof}
In the first result, we set $B_i = V_i$, of size $r = b$. Corollary~\ref{distcorr2} gives
$$
\pmt(v \in I) \geq \frac{\lambda_{i,j} - H_{i,v}}{\lambda_i - H_{i,v}} \geq \frac{\alpha - \alpha^2 \Delta}{b \alpha - \alpha^2 \Delta} = \frac{\sqrt{b} + \sqrt{b - 4 \Delta}}{\sqrt{b} (2 b - 1) + \sqrt{b - 4 \Delta}}
$$

For the second result, sort the vertices in decreasing order of weight within block $i$ as $v_{i,1}, v_{i,2}, \dots, v_{i,b}$ where $w(v_{i,1}) \geq w(v_{i,2}) \geq \dots \geq w(v_{i,b})$. We take $B_i = \{v_{i,1}, \dots, v_{i,r} \}$ where $r = \lceil 9 \Delta/2 \rceil$. By Corollary~\ref{distcorr2}, for any block $i$ we have
\begin{align*}
\pmt(X_i = v_{i,j}) \geq \frac{ \alpha - \alpha^2 \Delta }{r \alpha - \alpha^2 \Delta} \geq \frac{ \alpha - \alpha^2 \Delta }{(9 \Delta/2 + 1/2) \alpha - \alpha^2 \Delta}
\end{align*}

Routine algebraic calculations show that this expression is lower-bounded by $q = \frac{4}{27 \Delta}$.  Now consider some block $V_i$, and write $w_j = w(v_{i,j})$ for $j \leq b$. As $X_i \in B_i$ with probability one, we have
$$
\bE[w(V_i \cap I)] =  w_{r} + \sum_{j=1}^{r} \Pr(X_i = v_{i,j}) (w_j - w_{r}) \geq w_r + \sum_{j=1}^r q (w_j - w_r)
$$

Define $t = \sum_{j=1}^{r} w_j$. Since the vertices are in sorted order, $w_{r} \geq \frac{w(V_i) - t}{b - r}$ and so
\begin{align*}
\bE[ w(V_i \cap I) ] &\geq w_{r} +  q (t - r w_{r})=  w_{r} ( 1 - r q) + q t  \\
&\geq \frac{w(V_i) - t}{b-r} (1 - r q) + q t = -t  \frac{ (1-b q)}{b - r} + \frac{w(V_i) (1 - r q)}{b-r} \\
&\geq -w(V_i)  \frac{ (1-b q)}{b - r} + \frac{w(V_i) (1 - r q)}{b-r} = q w(V_i) \qquad \text{since $t \leq w(V_i)$ and $b q  \leq 1$}
\end{align*}

The result follows by linearity of expectation, summing over all blocks $i$.
\end{proof}

\begin{theorem}
\label{thm:weighted-transversals2}
Let $G$ be a graph of maximum degree $\Delta$ whose vertex set is partitioned into blocks of size at least $b$. If $b \geq 4 \Delta$, there is an efficient procedure to randomly sample an independent transversal $I$ of $G$ such that
$$
\bE[w(I)] \leq w(V) \frac{2}{b + 4 \sqrt{(b - 4 \Delta) \Delta}}
$$
\end{theorem}
\begin{proof}
We may assume $b \leq 5 \Delta$, as otherwise this follows from Proposition~\ref{5cor}.

Sort the vertices in increasing order of weight, so that in each block $i$ we have $w(v_{i,1}) \leq w(v_{i,2}) \leq \dots \leq w(v_{i,b})$. As before, let us write $X_i = j$ as shorthand for $X_i = v_{i,j}$. In this case, we apply our construction with $r = 4 \Delta$ and $B_i = \{v_{i,1}, \dots, v_{i,r} \}$

Let us fix a block $i$, and write $w_j = w(v_{i,j})$ for $j \leq b$. Then
\begin{equation}
\label{wve1}
w(V_i \cap I) =  w_1 + (w_2-w_1) [ X_i \geq 2] + (w_3-w_2) [X_i \geq 3] + \dots + (w_r - w_{r-1}) [X_i = r]
\end{equation}

We now have $H_{i,v} \leq \alpha^2 \Delta = \frac{1}{4 \Delta}$. So for any $j \geq 1$, Corollary~\ref{distthm3} gives $\Pr(X_i \geq j) \leq \frac{ (r - j + 1) \alpha}{r \alpha - (j-1) \frac{1}{4 \Delta}} = \frac{2 (r - j + 1)}{2r - j + 1} =:q_j$. With this notation, taking the expectation of (\ref{wve1}) gives
$$
\bE[w(V_i \cap I)] \leq w_1 q_1 + (w_2 - w_1) q_2 + (w_3 - w_2) q_3 + \dots + (w_r - w_{r-1}) q_r
$$

Let us define $u_j = w_j - w_{j-1} \geq 0$ for $j \geq 2$, and $u_1 = w_1$. Noting that $\sum_j w_j = \sum_j u_j (b - j + 1)$, we can write this as:
\begin{align*}
\bE[w(V_i \cap I)] &\leq \sum_{j=1}^r q_j u_j  = \sum_{j=1}^r u_j (b-j+1) \times \frac{2 (r-j+1)}{(2 r - j + 1) ( b-j+1 )} \\
&\leq  w(V_i) \max_{j \in [r]} \frac{2 (r-j+1)}{(2 r - j + 1) (b-j+1) } \leq w(V_i) \max_{x \in [0,r-1]} \frac{2 (r - x) }{(b - x)(2 r - x)}
\end{align*}

We can view this expression $f(x) = \frac{2 (r - x) }{(b - x)(2 r - x)}$ as a differentiable function of $x$, which has critical points at $x = r \pm \sqrt{ br - r^2}$. As $b \leq 8 \Delta$, the function $f(x)$ achieves its maximum value at $x = r - \sqrt{b r - r^2}$, and we have there
$$
f(x) = \frac{2}{b + 2 \sqrt{(b-r) r}} = \frac{2}{b + 4 \sqrt{(b - 4 \Delta) \Delta}}
$$

So $\bE[w(V_i \cap I)] \leq \frac{2 w(V_i)}{b + 4 \sqrt{(b - 4 \Delta) \Delta}}$. The result follows by linearity of expectation, summing over all blocks $i$.
\end{proof}

\section{Algorithmically implementing the hitting set for Theorem~\ref{devthm}}
\label{access-q-sec}
To implement the PRA using the fractional hitting-set $Q$ of Theorem~\ref{devthm}, we must efficiently sample from $Q$, in the following sense: given an atomic  bad-event $B$  on elements $x_1, \dots, x_k$ with weights $a_1, \dots, a_k$, we  must select a subset $Y \subseteq B$ with probability proportional to $Q(Y)$. Note that implemented naively this step might take $\binom{n}{d}$ time, which is potentially exponential.

For any set $W \subseteq B$, define
$$
R(W) = \sum_{\substack{Y: W \subseteq Y \subseteq B\\|Y| = d}} Q(Y)
$$
This can be evaluated in time $O(d k)$ using a dynamic program. To efficiently sample $Y \subseteq B$ with probability proportional to $Q(Y)$,  use the following procedure:

\begin{algorithmic}[1]
\STATE Let $Y_0 = \emptyset$
\FOR{$i = 1, \dots, d$}
\STATE \textbf{for each} $j \in B - Y_{i-1}$ \textbf{do} compute $q_j = R( Y_{i-1} \cup \{ j \})$.
\STATE Select some $j \in B - Y_{i-1}$ with probability proportional to $q_j$.
\STATE Set $Y_i = Y_{i-1} \cup \{ j \}$.
\ENDFOR
\RETURN $Y_d$
\end{algorithmic}

\begin{proposition}
For any $Z \subseteq B$ with $|Z| = d$, we have $\Pr(Y_d = Z) =\frac{Q(Z)}{\sum_{Y \subseteq B} Q(Y)}$.
\end{proposition}
\begin{proof}
We show by induction on $i$ the following: for any sets $W \subseteq Z \subseteq B$ with $|W| = i, |Z| = d$, and $0 \leq i \leq d$,  we have
$$
\Pr(Y_d = Z \mid Y_i = W) = \frac{Q(Z)}{R(W)}
$$

Applying this with $i = 0, W = \emptyset$ will give us the desired result. Also, the induction case with $i = d$ is trivially true. For the induction step:
{\allowdisplaybreaks
\begin{align*}
\Pr(Y_d = Z \mid Y_i = W) &= \frac{ \sum_{z \in Z - W} R( W \cup \{z \} ) \Pr( Y_d = Z \mid Y_{i+1} = W + z)}{\sum_{x \in B - W} R(W \cup \{x \})} \\
&= \frac{ \sum_{z \in Z - W}  R( W \cup \{z \}) Q(Z) / R(W + z) } {\sum_{x \in B - W} R(W \cup \{x \})} \qquad \text{induction hypothesis} \\
&= \frac{ Q(Z) (d - i)  } {\sum_{x \in B - W} \sum_{Y: W \cup \{x \} \subseteq Y \subseteq Z} Q(Y)} \\
&= \frac{ Q(Z) (d - i)  } {\sum_{Y: W \subseteq Y \subseteq Z} \sum_{x \in Y - W} Q(Y)} = \frac{ Q(Z) (d - i)  } {R(W) (d-i)} = \frac{Q(Z)}{R(W)}
\end{align*}
}
thus completing the induction.
\end{proof}

\section{Functional analysis for Theorem~\ref{csp-thm}}
\label{csp-proof-app}
We prove that the vector $b_k$ given in Theorem~\ref{csp-thm} satisfies the stated properties (C1), (C2).
  
\begin{proposition}
\label{csp-prop2}
Let $c \geq 1$, let $D \geq 2$, and let $\epsilon$ satisfy $0 < \epsilon \leq 1/D$. Then the quantity $b$ given below satisfies properties (C1), (C2):
$$
b = \begin{cases}
\frac{100 \ln(1/\epsilon)}{1 + \ln( \frac{\ln(1/\epsilon)}{c} )} & \text{for $c \leq \ln(1/\epsilon)$} \\
c(1+\epsilon) + 10 \sqrt{c \ln \bigl( D + \frac{1}{c \epsilon^2}  \bigr)} & \text{for $c > \ln(1/\epsilon)$}
\end{cases}
$$
\end{proposition}
\begin{proof}
Let $\delta = \ln(1/\epsilon)$ and let $d = \ln D$. As $D \geq 2$, we have $\epsilon \leq 1/2$ and hence $\delta \geq \ln 2$. We also write $\mu = c(1+\epsilon)$. It is immediately clear that $b \geq \mu$.

\textbf{Case I: $\pmb{c \leq \delta}$.} Let $x = \delta/c \geq 1$, so $b = \frac{100 \delta}{1 + \ln x}$. We estimate $\text{Ch}(\mu, b)$ as:
\begin{align*}
\text{Ch}(\mu,b) &= e^{b - c(1+\epsilon)} \Bigl( \frac{c(1+\epsilon)}{b} \Bigr)^b \leq e^{b} \Bigl( \frac{2 c}{b} \Bigr)^b = (\frac{2 e (\delta/x) (1 + \ln x)}{100 \delta})^{100 \delta / (1 + \ln x)} = \Bigl( \frac{e (1 + \ln x)}{50 x} \Bigr)^{ \frac{100 \delta}{1 + \ln x}}
\end{align*}

Simple calculus shows that, for $x \geq 1$, we have $\Bigl( \frac{e (1 + \ln x)}{50 x} \Bigr)^{1/ (1 + \ln x)} \leq e^{e/50 - 1} \leq 0.389$. So 
$$
\text{Ch}(\mu,b) \leq (0.389)^{100 \delta} \leq (0.389)^{100 \ln 2} \leq 3.78 \times 10^{-29} \leq 1/2.
$$
So (C2) is satisfied. To show (C1):
\begin{align*}
\Bigl( \frac{b + 1}{c (1 + \epsilon)} - 1 \Bigr) \text{Ch}(\mu, b) \times 4 D / \epsilon \leq \Bigl( \frac{2 b}{1}  \Bigr) \text{Ch}(\mu, b) \times 4 \epsilon^{-2}  \leq 800 \delta \text{Ch}(\mu, b) \times \epsilon^{-2} \leq \delta (0.389)^{100 \delta} \times e^{2 \delta}
\end{align*}

Simple calculus shows that, for $\delta \geq \ln 2$, this attains a maximum value of $1.048 \times 10^{-28}$, which is attained at $\delta = \ln 2$. In particular, it is smaller than $1$.

\textbf{Case II: $\pmb{c > \delta}$.} Let $v = D + \epsilon^{-2}/c$; clearly $D \leq v$, and as $\epsilon \leq 1/D$ we also have the crude bound $v \leq \epsilon^{-1} + \epsilon^{-2}/ \delta \leq \epsilon^{-3}$. With this notation, we have $b = \mu + 10 \sqrt{c \ln v}$. The relative deviation between $\mu$ and $b$ here is given by $\lambda = b/\mu - 1 =   10 \frac{\sqrt{c \ln v}}{c (1+\epsilon)}$. We observe the following bound on the size of $\lambda$:
$$
\lambda = \frac{10 \sqrt{\ln v}}{\sqrt{c} ( 1 + \epsilon)} \leq \frac{10 \sqrt{\ln (\epsilon^{-3})}}{\sqrt{\delta}} = 10 \sqrt{3} \leq 17.4
$$

Since $\lambda \leq 17.4$, a simple calculation shows that $\text{Ch}(\mu, \mu(1 + \lambda)) \leq e^{-\mu \lambda^2/10}$; thus, $\text{Ch}(\mu, b) \leq v^{-\frac{10}{1+\epsilon}} \leq v^{-6.6}$. As $v \geq D \geq 2$, this is at most $0.0104$, and thus (C1) is satisfied. To show (C2):
{\allowdisplaybreaks
\begin{align*}
\Bigl( \frac{b + 1}{c (1 + \epsilon)} - 1 \Bigr) \text{Ch}(c (1 + \epsilon),b)  
&\leq \Bigl( \frac{c (1+\epsilon) + 10 \sqrt{c \ln v} + 1 - c(1+\epsilon)}{c (1 + \epsilon)} \Bigr) \times v^{-6.6} \\
&= \Bigl( \frac{10 \sqrt{\ln v} + 1/\sqrt{c}}{\sqrt{c} (1 + \epsilon)} \Bigr) \times v^{-6.6} \\
&\leq 5 v^{-5.6} c^{-{1/2}} \qquad \text{as $10 \sqrt{\ln v} + 1 \leq 5 v$ for $v \geq D \geq 2$} \\
&= 5 (D + \epsilon^{-2}/c)^{-5.6} c^{-1/2}
\end{align*}
}
Simple analysis shows that this quantity $5 (D + \epsilon^{-2}/c)^{-5.6} c^{-1/2}$, as a function of $c$, has a maximum value at $c = 10.2/(D \epsilon^2)$, at which point we have
$$
5 (D + \epsilon^{-2}/c)^{-5.6} c^{-1/2} \leq \frac{0.927277 \epsilon}{D^{5.1}}
$$

Therefore, as $D \geq 2$, we have shown that
\begin{align*}
  \Bigl( \frac{b + 1}{c (1 + \epsilon)} - 1 \Bigr) \text{Ch}(c (1 + \epsilon),b)   &\leq \frac{0.927277 \epsilon}{D^{5.1}} \leq \frac{ \epsilon}{4 D}
\end{align*}
satisfying (C2).
\end{proof}

\section{Full proof of Theorem~\ref{final-thm}}
\label{full-proof-app}
In this section, we extend the proof of Theorem~\ref{final-thm} to cover the case when $C+D \leq 2^{896}$. 

\begin{proposition}
\label{noncons1a}
Suppose the original problem instance has congestion $C$ and dilation $D$. Let $i', C', d$ be positive integers with $d \leq C'$, and let $\beta > 1$. Define 
$$
p = \frac{(i' \beta)^d} {d! \binom{C'+1}{d}}
$$

If $p < 1$ and  $\beta -  \frac{D d p}{1-p} \geq 1$, then there is a schedule $S$ of length $L \leq C+D$, in which every interval of length $i'$ has congestion at most $C'$. Furthermore, $S$ can be found in polynomial time.
\end{proposition}
\begin{proof}
We add delays in the range $b = \{0, \dots, C-1 \}$ uniformly to each packet. In this case, we have a variable corresponding to each packet $x$, and for each delay $t$ we assign $\lambda_{x,t} = \beta/C$. For each edge $f$ and $i'$-interval $I$, we have a complex bad event $\mathcal B_{f, I}$ that the congestion in the interval exceeds $C'$. We use the fractional hitting-set $Q_{f,I}$ with parameter $d$ as described in Theorem~\ref{devthm}.

For a given $f, I$,  we first must compute $\mu_{f,I}$, which is the sum of $\lambda_{x,t}$ over all packets $x$ and delays $t$ contributing to congestion for $f, I$. There are at most $C$ packets which pass through $f$, and at most $i'$ choices for $t$ cause a transit of $f$ within $I$. So $\mu_{f,I} \leq C i' \beta/C = i' \beta$. The bad event is that this at least $C' + 1$. By Theorem~\ref{devthm}, this gives $S_{f,I} = S(\mathcal B_{f,I}, Q_{f,I}, \lambda) \leq \frac{\mu_{f,I}^d} {d! \binom{C' + 1}{d}} \leq p.$

Let $\mu_{x,f,I}$ be the sum of $\lambda_{x,t}$ over all delays $t$ contributing to congestion on edge $f$ in interval $I$. By Theorem~\ref{devthm} we have
$$
\sum_{f,I} G_x(Q_{f,I}, \lambda) \leq \sum_{f,I} \frac{\mu_{x,f,I}}{\mu_{f,I}} \times \frac{d \mu_{f,I}^d}{d! \binom{C+1}{d}} \leq \sum_{f,I} \frac{d p \mu_{x,f,I}}{i' \beta} 
$$

For each of the $D$ edges traversed by $x$, a choice of delay $t$ contributes to congestion within $i'$ separate intervals. So $\sum_{f,I} \mu_{x,f,I} \leq C \times \beta/C \times D \times i' = D i' \beta$. So
$$
\sum_{f,I} \frac{G_x(Q_{f,I}, \lambda )}{1 - S_{f,I}} \leq \frac{p \times D i' \beta}{i' \beta (1 -p)} = \frac{D d p}{1-p}
$$

In order to apply Theorem~\ref{resample-main-thm2}, each packet $x$ must satisfy the constraint
\begin{equation}
\label{hh2}
\lambda_x \geq  1 + \sum_{f,I} \frac{G_x(Q_{f,I}, \lambda)}{1 - S_{f,I}}.
\end{equation}

We have $\lambda_x = C \beta/C  = \beta$ and so (\ref{hh2}) becomes $\beta \geq 1 + \frac{D d p}{1-p}$, which is precisely the constraint specified in the hypothesis.
\end{proof}

\begin{proposition}
\label{noncons2a}
Suppose the original problem instance has congestion $C$ and dilation $D$ with $C + D \leq X = 100,000$. Then there is a schedule of length $L \leq C+D$, which satisfies the 4IC with respect to $T = 6, T' = 5$ which can be produced in polynomial time.
\end{proposition}
\begin{proof}
For each packet $x$ we add a random delay in the interval $\{0, \dots, C-1 \}$. For each edge $f$, and each 4-interval $I$ starting at time $s$ for odd integer $s$, we have a complex bad-event $\mathcal B_{f,I}$ that $$
\text{OF}^+ (c_s(f), c_{s+1}(f); T) + \text{OF}^{-}(c_{s+2}(f), c_{s+3}(f); T) > T'
$$

The overall proof here is very similar to Proposition~\ref{noncons2}: we will apply the PRA using separate variable for each packet (the value of a variable is the chosen delay), and the vector $\vec \lambda_{x,t} = \alpha = \beta/C$, where $\beta = 1.430599$.  We define $\Phi_0, \Phi_1, \Phi_2, \Phi_3, \Phi$ as in Proposition~\ref{noncons2}. 

Thus again $S_{f,I}  \leq \sum_{Y} Q_{f,I}(Y) \lambda^Y \leq \Phi \leq \hat \Phi$.  For any packet $x$ and delay $t$ and $j = 0, \dots, 3$, there are at most $D/2$ pairs $f,I$ for which packet $x,t$ has type $j$. Each such $f,I$ has $G_{x,t}(Q_{f,I}, \lambda) \leq \Phi_j$. Summing over all $C$ values of $t$ and all such $f, I$ gives
$$
\sum_{f, I} G_x (Q_{f,I}, \lambda) \leq \frac{C D}{2} (\Phi_0 + \Phi_1 + \Phi_2 + \Phi_3) \leq \frac{C X}{2} (\hat \Phi_0 + \hat \Phi_1 + \hat \Phi_2 + \hat \Phi_3)
$$

In order to apply Theorem~\ref{resample-main-thm2}, each packet $x$ must satisfy the constraint
$$
\lambda_x \geq 1 + \sum_{f,I} \frac{G_x(Q_{f,I}, \lambda)}{1 - S_{f,I}}
$$
We have $\lambda_x = \beta$, and so it suffices to satisfy the condition
\begin{equation}
\label{ss1alt}
\beta  - X/2 \times \frac{C (\hat \Phi_0 + \hat \Phi_1 + \hat \Phi_2 + \hat \Phi_3)}{1 - \hat \Phi} \geq 1
\end{equation}

We now observe that the LHS of (\ref{ss1alt}) is a decreasing function of $C$. To see this, note that for instance we have 
\begin{equation}
\label{r1alt2}
C \hat \Phi_0 = C \sum_{y_0, y_1, y_2, y_3} \tbinom{C - 1}{y_0 - 1} \tbinom{C}{y_1} \tbinom{C}{y_2} \tbinom{C}{y_3} b(y_0 , y_1, y_2, y_3) (\beta/C)^{y_0 + y_1 + y_2 + y_3 }
\end{equation}
which is an increasing function of $C$. A similar argument holds for $C \Phi_1, C \Phi_2, C \Phi_3, \Phi$. 

Because of this fact, it suffices to satisfy (\ref{ss1alt}) at $C = X$. We use a computer code similar to Proposition~\ref{noncons2} to select $b$ in this case. Overall, we get a bound
$$
\frac{\hat \Phi_0 + \hat \Phi_1 + \hat \Phi_2 + \hat \Phi_3}{1-\hat \Phi} \leq 8.16 \times 10^{-11}
$$
which satisfies (\ref{ss1alt}).
\end{proof}

\begin{theorem}
Suppose $2 \leq C + D < 2^{896}$. Then there is a feasible schedule of makespan at most $6.73 (C+D)$.
\end{theorem}
\begin{proof}
\textbf{Case I: $\pmb{2^{32} \leq C+D < 2^{896}}$.}

Apply Proposition~\ref{noncons1a} with $\beta = 1.00563, C' = 17040600, i' = 2^{24}, d = 250000$ to produce a schedule $S_1$ of length $L_1 \leq C+D$, 
in which each interval of length $2^{24}$ has congestion at most $17040600$.

Apply Proposition~\ref{noncons1} to $S_1$ with $\alpha = 5.98328\times 10^{-8}, C' = 1320, d = 270, m = 400$ to get a schedule $S_2$ of length $L_2 \leq 1.0025 L_1 + 2^{24}$, in which each interval of length $1024$ has congestion at most $1320$.

Apply Proposition~\ref{noncons2} to $S_2$ to get a schedule $S_3$ of length $L_3 \leq 1.02779 L_2 + 1024$ satisfying the 4IC with $T = 6, T' = 5$. By Proposition~\ref{4frameprop}, this yields a feasible schedule $S_4$ whose makespan is $L_4 \leq 6.5 L_3 + 4.5$. As $C+D \geq 2^{32}$, we have $L_4 \leq 6.73 (C+D)$.

\textbf{Case II: $\pmb{150,000 \leq C+D < 2^{32}}$.}
Apply Proposition~\ref{noncons1a} with $\beta = 1.01831, C' = 1320, i' = 1024, d = 280$ to produce a schedule $S_1$ of length $L_1 = C+D$, in which each interval of length $1024$ has congestion at most $1320$.

Apply Proposition~\ref{noncons2} to $S_1$ to get a schedule $S_2$ of length $L_2 \leq 1.02779 L_1 + 1024$ satisfying the 4IC with $T = 6, T' = 5$. By Proposition~\ref{4frameprop}, this yields a feasible schedule $S_3$ whose makespan is $L_3 \leq 6.5 L_2 + 4.5$. As $C+D \geq 2^{17}$, we have  $L_3 \leq 6.73 (C+D)$.

\textbf{Case III: $\pmb{2^{15} \leq C+D < 150,000}$.} Apply Proposition~\ref{noncons1a} with $\beta = 1.00202, C' = 675, i' = 512, d = 140$ to produce a schedule $S_1$ of length $L_1 = C+D$, in which each interval of length $512$ has congestion at most $675$.

We now apply a slight variant of Proposition~\ref{noncons2} to $S_1$, using parameters $m = 51, C = 675, i = 512, T = 6, T' = 5, \alpha = 0.002113$ to produce a schedule of length $L_2 = (1 + 1/51) L_1 + 512$, which satisfies the 4IC for $T = 6, T' = 5$.   By Proposition~\ref{4frameprop}, this yields a feasible schedule $S_3$ whose makespan is $L_3 \leq 6.5 L_2 + 4.5$.  As $C+D \geq 2^{15}$, we have  $L_3 \leq 6.73 (C+D)$.

\textbf{Case IV: $\pmb{20 \leq C+D \leq 2^{15}}$.} Apply Proposition~\ref{noncons2a} to produce a schedule $S_1$ of length $L_1 \leq C+D$ satisfying the 4IC for $T = 6, T' = 5$. By Proposition~\ref{4frameprop}, this yields a feasible schedule $S_2$ whose makespan is $L_2 \leq 6.5 L_1 + 4.5$.  As $C+D \geq 20$, this is at most $6.73 (C+D)$.

\textbf{Case V: $\pmb{2 \leq C+D \leq 19}$.} Apply Proposition~\ref{noncons1a} with $\beta = 1.27877, C' = 9, i' = 2, d = 8$ to produce a schedule $S_1$ of length $L_1 \leq C+D$, in which each interval of length $2$ has congestion at most $9$. By Proposition~\ref{peisprop}, this yields a feasible schedule of length $\lceil L_1/2 \rceil (9 + 1) \leq 5(C+D) + 5/2 \leq 6.25 (C+D)$.
\end{proof}
\end{document}